\newtheorem{thm}{Theorem}[section]
\newtheorem{prop}[thm]{Proposition}
\newtheorem{lem}[thm]{Lemma}
\newtheorem{cor}[thm]{Corollary}
\theoremstyle{definition}
\newtheorem{defn}[thm]{Definition}
\theoremstyle{remark}
\newtheorem{rem}[thm]{Remark}
\newcommand{\Rr}{\mathbb R}
\renewcommand{\d}{\mathrm d}                           
\renewcommand{\d}{\mathrm d}               
\def\bb#1#2{\left\{#1,#2\right\}}
\begin{document}
\title{Hypersymplectic structures on Courant algebroids}

\author{P. Antunes}
\address{CMUC, Department of Mathematics, University of Coimbra, 3001-501 Coimbra, Portugal}
\email{pantunes@mat.uc.pt}
\author{J.M. Nunes da Costa}
\address{CMUC, Department of Mathematics, University of Coimbra, 3001-501 Coimbra, Portugal}
\email{jmcosta@mat.uc.pt}

\begin{abstract}
We introduce the notion of hypersymplectic structure on a Courant algebroid and we prove the existence of a one-to-one correspondence between hypersymplectic and hyperk\"{a}hler structures. This correspondence provides a simpler way to define a hyperk\"{a}hler structure on a Courant algebroid. We show that hypersymplectic structures on Courant algebroids encompass hyperk\"{a}hler and  hyperk\"{a}hler structures with torsion on Lie algebroids. In the latter, the torsion existing at the Lie algebroid level is incorporated in the Courant structure. Cases of hypersymplectic structures on Courant algebroids which are doubles of Lie, quasi-Lie and proto-Lie bialgebroids are investigated.
\end{abstract}

\maketitle

\textbf{Mathematics Subject Classifications (2010).} Primary 53D17; Secondary 53D18, 53C26.

\

\textbf{Keywords.} Hypersymplectic, hyperk\"{a}hler, Courant algebroid.

%
\section{Introduction}             %
\label{section_introduction}           %

In the past years, hyperstructures on Courant algebroids deserved the attention of several authors.
Namely, we mention Bursztyn {\em et al.} \cite{BCG08} who discussed hyperk\"{a}hler structures and Sti\'enon \cite{Stienon} for the case of hypercomplex structures. In the present article we introduce and study hypersymplectic structures and, more generally, $\boldsymbol{\varepsilon}$-hypersymplectic structures on Courant algebroids, a notion that encompasses hypercomplex structures, because the para-complex case is also covered, and hyperk\"{a}hler structures. In fact,
 a very interesting feature of hypersymplectic structures on Courant algebroids is that they are in a one-to-one correspondence with hyperk\"{a}hler structures. An important point to notice, which is a direct consequence of the existence of this correspondence, is that one gets a simpler way to define a hyperk\"{a}hler structure because, contrary to the case of hyperk\"{a}hler structures, our definition of hypersymplectic structure does not require the existence \emph{ab initio} of a pseudo-metric on the Courant algebroid. As we shall see in Section~\ref{section_5}, the pseudo-metric is constructed out of the given endomorphisms.

The basic example of Courant algebroid is the vector bundle $A\oplus A^*$ equipped with the Dorfmann bracket (or its skew-symmetrization, called the Courant bracket). This Courant structure on  $A \oplus A^*$ is the double of a Lie bialgebroid structure on $(A,A^*)$, where $A^*$ is the null Lie algebroid. Doubles of more general cases of Lie bialgebroids structures on $(A,A^*)$ are also Courant structures on $A\oplus A^*$. More generally, doubles of quasi-Lie bialgebroids and of proto-Lie bialgebroids structures on $(A,A^*)$ determine Courant structures on $A\oplus A^*$ \cite{YKS05, roy}.
In the Lie algebroid setting, and inspired by hypersymplectic structures on manifolds, defined by Xu in \cite{Xu97}, we have introduced  in \cite{AC13} the notion of hypersymplectic structure (see also \cite{A10}). A hypersymplectic structure on a Lie algebroid $A$ is a triplet $(\omega_1, \omega_2, \omega_3)$ of symplectic forms  on $A$, such that the square of the transition morphisms, endomorphisms of $A$ constructed out of the 2-forms $\omega_i$ and their inverses,  is equal to $\pm {\rm id}_A$. The extension of the theory of hypersymplectic structures to Courant algebroids is not straightforward, since the notion of symplectic section on a Courant algebroid is not known. However, inspired on the generalized complex geometry \emph{\`a la Hitchin} \cite{hitchin}, where a $2$-form on a manifold $M$ gives rise to an endomorphism of $TM \oplus T^*M$, we replace the morphism $\omega^\flat$ associated to a $2$-form $\omega$ on $A$ by an endomorphism of $A \oplus A^*$. More precisely, the idea which is behind our results, is to associate to each triplet $(\omega_1, \omega_2, \omega_3)$ of non-degenerate $2$-forms  on a Lie algebroid $A$, with inverse $(\pi_1, \pi_2,\pi_3) \in (\Gamma(\wedge^2 A))^3$, a triplet of endomorphisms
$\mathcal{S}_{i}=\left[
                    \begin{array}{ccc}
                    0&\ &\varepsilon_i\,\pi_i^{\sharp}\\
                    \omega_i^{\flat}&\ &0
                    \end{array}
\right]$, \, $\varepsilon_i=\pm 1, i=1,2,3$, of the vector bundle $A \oplus A^*$, and introduce an appropriated notion of hypersymplectic structure on a Courant algebroid in such a way that the following holds: $(\omega_1, \omega_2, \omega_3)$ is a hypersymplectic structure on a Lie algebroid $(A, \mu)$ if and only if $(\mathcal{S}_{1}, \mathcal{S}_{2}, \mathcal{S}_{3})$ is a hypersymplectic structure on the Courant algebroid $(A \oplus A^*, \mu)$. However, while considering the vector bundle $A \oplus A^*$ equipped with a Courant structure which is of type $\mu + \psi$, with $\psi$ a trivector on $A$, we realize that the previous equivalence fails. In fact, the structure that has to be considered on $A$ is a hypersymplectic structure with torsion, a notion that we study in a separate article \cite{AC14_a} and which is related to hyperk\"{a}hler structures with torsion. Hyperk\"{a}hler structures with torsion on manifolds, also known as HKT structures,  first appeared in \cite{HP96} in relation with sigma models in string theory and, since then, HKT and other geometries with torsion caught the interest of many physicists and mathematicians.

In the current article most results are established on the more general pre-Courant framework, and hold   without any change, in the Courant algebroid case. The exception is when we deal with hypersymplectic structures with torsion on Lie algebroids (Section \ref{section_8}), where an extra condition must be considered when we pass from pre-Courant to Courant algebroids. We show that, although hypersymplectic and hypersymplectic structures with torsion on Lie algebroids
are different in nature, when we look at them in the pre-Courant algebroid setting, they become of the same type. Roughly speaking, when one goes from Lie to pre-Courant algebroids, the torsion carried by the hypersymplectic structure on the Lie algebroid passes to the pre-Courant structure itself. More precisely, we prove that having a hypersymplectic structure $(\omega_1, \omega_2, \omega_3)$ on $A$, with or without torsion, is equivalent to having a hypersymplectic structure  $(\mathcal{S}_{1}, \mathcal{S}_{2}, \mathcal{S}_{3})$ on $A \oplus A^*$, with a suitable pre-Courant structure. More involved situations are those where, besides the structures considered on $A$, the vector bundle $A^*$ itself is endowed with a hypersymplectic structure, with or without torsion, determined by  $(\pi_1, \pi_2,\pi_3)$. We also prove that, under some conditions, this is equivalent to $(\mathcal{S}_{1}, \mathcal{S}_{2}, \mathcal{S}_{3})$ being a hypersymplectic structure on $A \oplus A^*$.

Besides the Introduction, the article contains eight sections. Since many of the computations are done using the big bracket,  Section \ref{section_1} contains a brief review of the supergeometric setting as well as the main notions around the Courant and pre-Courant algebroid definitions. Section \ref{section_3} is devoted to some properties of the Nijenhuis torsion on pre-Courant algebroids, that are used in the remaining sections. To our knowledge some of these properties (Propositions \ref{prop_T_theta(N)=0<->T_theta_N(N)=0} and \ref{prop_T_theta(J)=0<->T_theta_I(J)=0}) are new. In Section \ref{section_2} we introduce the notion of $\boldsymbol{\varepsilon}$-hypersymplectic structure on a pre-Courant algebroid (Definition~\ref{def_epsilonHS_pre-Courant}) and we explore the properties of the morphisms induced by this structure.
Sections \ref{section_4} and \ref{section_5} treat the case $\varepsilon_1 \varepsilon_2 \varepsilon_3=-1$. The main result of Section \ref{section_4} is that the transition morphisms $\mathcal{T}_{i}$ are Nijenhuis (Theorem~\ref{T_i_Nijenhuis}).
We also show that  $(\mathcal{S}_{1}, \mathcal{S}_{2}, \mathcal{S}_{3})$ is a hypersymplectic structure on a pre-Courant algebroid $(E, \Theta)$ if and only if it is hypersymplectic for the pre-Courant structure on $E$ deformed by $\mathcal{T}_{i}$ or by $\mathcal{S}_{i}$ (Theorem~\ref{thm_deformation_Theta}). In Section \ref{section_5} we prove a one-to-one correspondence theorem between hypersymplectic and hyperk\"{a}hler structures on a pre-Courant algebroid (Theorem~\ref{thm_1_1_corresp}). Moreover, we show how the transition morphisms $\mathcal{T}_{i}$ can take the role of the morphisms $\mathcal{S}_{i}$ to define a new hypersymplectic structure on the pre-Courant algebroid (Theorem~\ref{thm_invariant_by_rotation}).
Sections \ref{section_6}, \ref{section_7} and \ref{section_8} are devoted to examples of hypersymplectic structures on $A \oplus A^*$, equipped with several (pre-)Courant structures.
We start with the simplest case in Section \ref{section_6}. We prove that $(\omega_1, \omega_2, \omega_3)$ is a hypersymplectic structure on a Lie algebroid $(A, \mu)$ if and only if $(\mathcal{S}_{1}, \mathcal{S}_{2}, \mathcal{S}_{3})$ is a hypersymplectic structure on the Courant algebroid $(A \oplus A^*, \mu)$ (Theorem~\ref{structure_A+A*_mu}). In Section \ref{section_7} the Courant structure on $A \oplus A^*$ is the double of a Lie bialgebroid $((A,A^*), \mu, \gamma)$ and we prove that $(\mathcal{S}_{1}, \mathcal{S}_{2}, \mathcal{S}_{3})$ is a hypersymplectic structure on $(A \oplus A^*, \mu + \gamma)$ if and only if  $(\omega_1, \omega_2, \omega_3)$ is a hypersymplectic structure on $(A, \mu)$ and $(\pi_1, \pi_2, \pi_3)$ is a hypersymplectic structure on $(A^*, \gamma)$ (Theorem~\ref{structure_A+A*_mu+gamma}). The particular case of a triangular Lie bialgebroid is also considered (Corollary~\ref{cor_structure_A+A*_mu+mu_pi}). The class of examples we give in Section \ref{section_8}, deal with the notion of hypersymplectic structure with torsion on a Lie algebroid. This is a structure that generalizes the hypersymplectic case, where the non-degenerate $2$-forms $\omega_i$ are not closed but satisfy the condition $N_1 \d \omega_1= N_2 \d \omega_2=N_3 \d \omega_3$, with $N_i$ the transition morphisms. We show that having a hypersymplectic structure with torsion $(\omega_1, \omega_2, \omega_3)$ on a Lie algebroid $(A,\mu)$ is equivalent to $(\mathcal{S}_{1}, \mathcal{S}_{2}, \mathcal{S}_{3})$ being a hypersymplectic structure on the pre-Courant algebroid $(A \oplus A^*, \mu+\psi)$, with $\psi \in \Gamma(\wedge^3A)$ (Proposition~\ref{HS_preCourant_structure_mu+gamma+psi}). The corresponding result on the Courant algebroid $(A \oplus A^*, \mu+\psi)$, which is the double of  the quasi-Lie bialgebroid  $((A, A^*), \mu, 0, \psi)$, requires the bivectors $\pi_i$ to be weak-Poisson with respect to $\mu$ (Theorem~\ref{HS_Courant_structure_mu+gamma+psi}).  The last case that we treat, which is the more general one,  is when both Lie algebroids $(A, \mu)$ and $(A^*, \gamma)$ are equipped with hypersymplectic structures with torsion.  We show that this is equivalent to having a hypersymplectic structure on the pre-Courant algebroid $(A \oplus A^*, \mu+\gamma+\psi+\phi)$, with $\psi \in \Gamma(\wedge^3A)$ and $\phi \in \Gamma(\wedge^3A^*)$ (Proposition \ref{HS_preCourant_structure_mu+gamma+psi+phi}). As before,  the corresponding result on the Courant algebroid $(A \oplus A^*, \mu+\gamma +\psi +\phi)$, which is the double of  the  proto-Lie bialgebroid $((A, A^*), \mu, \gamma, \psi, \phi)$, requires some extra conditions on the $\omega_i$'s and on the $\pi_i$'s (Theorem~\ref{last_thm}).



%
\section{Preliminaries on Courant algebroids}         
\label{section_1}                                     

%
We begin this section by introducing the supergeometric setting, following the same approach as in \cite{voronov,royContemp} (see also \cite{A10}). Given a vector bundle $A \to M$, we denote by $A[n]$ the graded manifold obtained by shifting the fibre degree by $n$. The graded manifold $T^*[2]A[1]$ is equipped with a canonical symplectic structure which induces a Poisson bracket on its algebra of functions $\mathcal{F}:=C^\infty(T^*[2]A[1])$. This Poisson bracket is sometimes called the \emph{big bracket} (see \cite{YKS05}).

Let us describe locally this Poisson algebra. Fix local coordinates $x_i, p^i,\xi_a, \theta^a$, $i \in \{1,\dots,n\}, a \in \{1,\dots,d\}$, in $T^*[2]A[1]$, where $x_i,\xi_a$ are local coordinates on $A[1]$ and $p^i, \theta^a$ are their associated moment coordinates. In these local coordinates, the Poisson bracket is given by
 $$ \{p^i,x_i\}=\{\theta^a,\xi_a\}=1,  \quad  i =1, \dots, n, \, \, a=1, \dots , d, $$
while all the remaining brackets vanish.

The Poisson algebra of functions $\mathcal{F}$ is endowed with an $(\mathbb{N} \times \mathbb{N})$-valued bidegree. We
define this bidegree (locally but it is well defined globally, see \cite{voronov, royContemp}) as follows: the coordinates on the base
manifold $M$, $x_i$, $i \in \{1,\dots,n\}$, have bidegree $(0,0)$, while the coordinates on the fibres, $\xi_a$, $a \in \{1,\dots,d\}$,
have bidegree $(0,1)$ and their associated moment coordinates, $p^i$ and $\theta^a$, have bidegree $(1,1)$ and $(1,0)$, respectively.
We denote by $\mathcal{F}^{k,l}$ the space of functions of bidegree $(k,l)$ and we verify that the big bracket has bidegree $(-1,-1)$, i.e.,
$$\{\mathcal{F}^{k_1,l_1},\mathcal{F}^{k_2,l_2}\}\subset \mathcal{F}^{k_1+k_2-1,l_1+l_2-1}.$$

This construction is a particular case of a more general one \cite{royContemp} in which we consider a vector bundle $E$
equipped with a fibrewise non-degenerate symmetric bilinear form $\langle.,.\rangle$.
In this more general setting, we consider the graded symplectic manifold $\mathcal{E}:=p^*(T^*[2]E[1])$,
which is the pull-back of $T^*[2]E[1]$ by the map $p:E[1] \to E[1]\oplus E^*[1]$ defined by $X \mapsto (X, \frac{1}{2}\langle X,.\rangle)$.
 We denote by $\mathcal{F}_{E}$ the graded algebra of functions on $\mathcal{E}$, i.e.,
 $\mathcal{F}_{E}:=C^\infty(\mathcal{E})$. The algebra $\mathcal{F}_{E}$ is equipped with
the canonical Poisson bracket, denoted by $\{.,.\}$, which has degree $-2$.
Notice that $\mathcal{F}_{E}^0=C^\infty(M)$ and $\mathcal{F}_{E}^1=\Gamma(E)$. Under these
identifications, the Poisson bracket of functions of degrees $0$ and $1$ is given by
$$\{f,g\}=0,\; \; \{f, X\}=0 \quad {\hbox{and}} \quad \{X,Y\}=\langle X,Y \rangle,$$
for all $X,Y \in \Gamma(E)$ and $f,g \in C^\infty(M)$.

 When $E:=A\oplus A^*$ (with $A$ a vector bundle over $M$) and when $\langle .,.\rangle$ is the usual symmetric bilinear form:
 \begin{equation}\label{eq:canbin} \langle X + \alpha, Y + \beta \rangle = \alpha(Y)+ \beta(X), \hbox{  } \,\,\, \forall X,Y \in \Gamma(A), \alpha, \beta \in \Gamma(A^*), \end{equation}
the algebras $\mathcal{F}=C^\infty(T^*[2]A[1])$ and $\mathcal{F}_{A\oplus A^*}$ are isomorphic Poisson algebras \cite{royContemp} and the two constructions above coincide.

\begin{defn}\cite{ALC11} \label{def_courant}
 A \emph{pre-Courant} structure on $(E, \langle\cdot,\cdot\rangle)$ is a pair $(\rho, [\cdot,\cdot])$, where  $\rho:E\to TM$ is a morphism of vector bundles called the \emph{anchor},
 and $[\cdot,\cdot]:\Gamma(E)\times \Gamma(E)\to \Gamma(E)$ is a $\mathbb{R}$-bilinear (non necessarily skew-symmetric) bracket, called
the \emph{Dorfman bracket}, satisfying the relations
\begin{equation} \label{pre_Courant1}
\rho(X)\cdot\langle Y,Z\rangle=\langle[X,Y],Z\rangle +  \langle Y,[X,Z]\rangle
\end{equation}
and
\begin{equation} \label{pre_Courant2}
\rho(X)\cdot \langle Y,Z\rangle=\langle X, [Y,Z]+ [Z,Y]\rangle,
\end{equation}
for all $X,Y,Z \in \Gamma(E)$.
\end{defn}

\noindent From (\ref{pre_Courant1}) and (\ref{pre_Courant2}), we obtain the Leibniz rule~\cite{YKS05}
$$[X, fY]=f[X,Y]+ (\rho(X).f)Y,$$ for all $X,Y \in \Gamma(E)$ and $f \in C^\infty(M)$.
If a pre-Courant structure $(\rho, [\cdot,\cdot])$ satisfies the Jacobi identity,
$$[X,[Y,Z]] =[[X,Y],Z] + [Y,[X,Z]],$$
for all $X,Y,Z \in \Gamma(E)$, then the pair $(\rho, [\cdot,\cdot])$ is called a \emph{Courant} structure on \mbox{$(E,\langle\cdot,\cdot\rangle)$}.

There is a one-to-one correspondence between pre-Courant structures on $(E, \langle\cdot,\cdot\rangle)$ and elements in $\mathcal{F}_E^3$. The anchor and Dorfman bracket associated to a given $\Theta\in \mathcal{F}_E^3$ are defined, for all $X,Y \in \Gamma(E)$ and $f \in C^\infty(M)$, by the derived bracket expressions
\begin{equation}\label{eq_derived_bracket_expressions}
  \rho(X)\cdot f=\{\{X,\Theta\},f\} \quad {\hbox{and}} \quad {[X,Y]=\{\{X,\Theta\},Y\}}.
\end{equation}

The next theorem shows how a Courant structure can be defined in the supergeometric setting.
\begin{thm} \cite{royContemp}
There is a $1-1$ correspondence between Courant structures on \mbox{$(E,\langle.,.\rangle)$} and functions $\Theta \in \mathcal{F}_E^3$ such that \mbox{$\{\Theta,\Theta\}=0$}.
\end{thm}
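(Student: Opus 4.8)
The plan is to build on the one-to-one correspondence between pre-Courant structures on $(E,\langle\cdot,\cdot\rangle)$ and functions in $\mathcal{F}_E^3$ recalled just above: by that correspondence, \emph{every} $\Theta\in\mathcal{F}_E^3$ already produces, through the derived-bracket formulas~(\ref{eq_derived_bracket_expressions}), an anchor and a Dorfman bracket satisfying~(\ref{pre_Courant1}) and~(\ref{pre_Courant2}). Hence the only Courant axiom left to analyse is the Jacobi identity, and it suffices to show that the bracket $[X,Y]=\{\{X,\Theta\},Y\}$ satisfies the Jacobi identity if and only if $\{\Theta,\Theta\}=0$. Restricting the bijection $\Theta\leftrightarrow(\rho,[\cdot,\cdot])$ to the subset $\{\Theta\in\mathcal{F}_E^3:\{\Theta,\Theta\}=0\}$ on one side and to Courant structures on the other then yields the asserted $1$--$1$ correspondence.

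The crux is the \emph{master identity}: for every $\Theta\in\mathcal{F}_E^3$ and all $X,Y,Z\in\Gamma(E)=\mathcal{F}_E^1$,
$$
\big[X,[Y,Z]\big]-\big[[X,Y],Z\big]-\big[Y,[X,Z]\big]
=\tfrac12\,\big\{\{\{\{\Theta,\Theta\},X\},Y\},Z\big\}
$$
(with the sign depending only on the chosen conventions for the big bracket). I would obtain it by substituting $[X,Y]=\{\{X,\Theta\},Y\}$ into the left-hand side and expanding, using repeatedly the graded Jacobi identity for $\{\cdot,\cdot\}$ together with its bidegree $(-1,-1)$ — so that the functions $\{X,Y\}=\langle X,Y\rangle\in\mathcal{F}_E^0$ and the sections $\{f,\Theta\}\in\mathcal{F}_E^1$ ($f\in C^\infty(M)$) appearing in intermediate steps are handled correctly — and the super-sign rules coming from $X,Y,Z$ being odd of degree $1$ and $\Theta$ odd of degree $3$. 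I expect this to be the main obstacle: the computation is routine but sign-sensitive, and its whole content is that all terms \emph{not} carrying the factor $\{\Theta,\Theta\}$ cancel in pairs. (Equivalently, one may phrase it through the derivation $\d_\Theta:=\{\Theta,\cdot\}$, whose square equals $\tfrac12\{\{\Theta,\Theta\},\cdot\}$, and invoke Kosmann--Schwarzbach's derived-bracket calculus.)

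Granting the master identity, the implication ``$\{\Theta,\Theta\}=0\Rightarrow$ Jacobi'' is immediate. For the converse, assume the Jacobi identity; then $\{\{\{\{\Theta,\Theta\},X\},Y\},Z\}=0$ for all $X,Y,Z\in\Gamma(E)$. Here I would use the elementary fact, read off from the local coordinate description of $\mathcal{F}_E$ recalled above, that a function $F\in\mathcal{F}_E$ of positive degree whose big bracket with every section of $E$ vanishes is identically zero: brackets with the degree-one coordinates force $F$ to be independent of all degree-one coordinates, and then brackets with the base coordinates (times a section) force $F$ to be independent of the momenta as well, so $F$ has degree $0$, hence $F=0$. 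Applying this fact successively to $\{\{\{\Theta,\Theta\},X\},Y\}\in\mathcal{F}_E^2$, then to $\{\{\Theta,\Theta\},X\}\in\mathcal{F}_E^3$, and finally to $\{\Theta,\Theta\}\in\mathcal{F}_E^4$, we conclude $\{\Theta,\Theta\}=0$, which completes the argument.
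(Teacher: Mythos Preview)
The paper does not prove this theorem; it is stated with a citation to Roytenberg \cite{royContemp} and used as a black box. Your sketch is the standard derived-bracket argument that underlies Roytenberg's result, and it is correct: the master identity expressing the Jacobiator of the Dorfman bracket as a triple derived bracket of $\{\Theta,\Theta\}$ is exactly the computation one finds there (or in Kosmann--Schwarzbach's treatment), and your non-degeneracy argument for the converse---bracketing first with constant sections to kill the odd dependence, then with $f\cdot(\text{section})$ to kill the momentum dependence---is the right way to see that an element of positive degree in $\mathcal{F}_E$ annihilated by all of $\Gamma(E)$ must vanish. So there is nothing to compare against in this paper; you have supplied a valid proof where the authors chose simply to quote the literature.
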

\noindent If $\Theta$ is a (pre-)Courant structure on $(E, \langle\cdot,\cdot\rangle)$, then the triple $(E, \langle\cdot,\cdot \rangle, \Theta)$ is called a \emph{(pre-)Courant algebroid}.
For the sake of simplicity, we often denote a (pre-)Courant algebroid by the pair $(E, \Theta)$ instead of the triple $(E, \langle\cdot,\cdot \rangle, \Theta)$.

When $E= A \oplus A^*$ and $\langle\cdot,\cdot\rangle$ is the usual symmetric bilinear form (\ref{eq:canbin}), a pre-Courant structure $\Theta \in \mathcal{F}_E^3$ can be decomposed using the bidegrees:
$$\Theta=\mu + \gamma + \phi + \psi,$$
with $\mu \in \mathcal{F}_{A \oplus A^*}^{1,2}, \gamma \in \mathcal{F}_{A \oplus A^*}^{2,1}, \phi \in \mathcal{F}_{A \oplus A^*}^{0,3}=\Gamma(\wedge^3 A^*)$ and $\psi \in \mathcal{F}_{A \oplus A^*}^{3,0}=\Gamma(\wedge^3 A)$.
We recall from~\cite{roy} that, when $\gamma = \phi = \psi =0$, $\Theta$ is a Courant structure on $(A \oplus A^*, \langle\cdot,\cdot\rangle)$ \emph{if and only if} $(A,\mu)$ is a Lie algebroid; the anchor and the bracket of the Lie algebroid $(A, \mu)$ are given by (\ref{eq_derived_bracket_expressions}), where a section $X$ of $A$ is identified with $X\oplus 0 \in \Gamma(A\oplus A^*)$.
When $\phi = \psi =0$, $\Theta$ is a Courant structure on $(A \oplus A^*, \langle\cdot,\cdot\rangle)$ \emph{if and only if} $((A,A^*), \mu, \gamma)$ is a Lie bialgebroid and when $\phi=0$ (resp. $\psi=0$), $\Theta$ is a Courant structure on $(A \oplus A^*, \langle\cdot,\cdot\rangle)$ \emph{if and only if} $((A,A^*), \mu, \gamma, \psi)$ (resp. $((A^*,A), \gamma, \mu, \phi)$) is a quasi-Lie bialgebroid. In the more general case, $\Theta=\mu + \gamma + \phi + \psi$ is a Courant structure \emph{if and only if} $((A,A^*), \mu, \gamma, \psi, \phi)$  is a proto-Lie bialgebroid.


\section{Nijenhuis torsion on pre-Courant algebroids}
\label{section_3}
%

Let $(E,\langle \cdot,\cdot \rangle,\Theta)$ be a pre-Courant algebroid with anchor and Dorfman bracket defined by (\ref{eq_derived_bracket_expressions}). Given an endomorphism $\mathcal{I}:E \to E$, we define a \emph{deformed} pre-Courant algebroid structure $(\rho_{\mathcal{I}}, [\cdot,\cdot]_{\mathcal{I}})$ on $E$ by setting
$$\left\{
  \begin{array}{l}
    \rho_{\mathcal{I}}=\rho \circ {\mathcal{I}}\\
    {[X,Y]_{\mathcal{I}}} =[{\mathcal{I}}X,Y]+[X,{\mathcal{I}}Y]-{\mathcal{I}}[X,Y], \quad \forall X,Y \in \Gamma(E).
  \end{array}
\right.$$
The deformation of $(\rho_{\mathcal{I}}, [\cdot,\cdot]_{\mathcal{I}})$ by an endomorphism ${\mathcal{J}}$ of $E$ is denoted by $(\rho_{{\mathcal{I}},{\mathcal{J}}}, [\cdot,\cdot]_{{\mathcal{I}},{\mathcal{J}}})$.
The \emph{concomitant} $C_\Theta({\mathcal{I}},{\mathcal{J}})$ of two endomorphisms ${\mathcal{I}}$ and ${\mathcal{J}}$, on a pre-Courant algebroid $(E,\langle \cdot,\cdot \rangle,\Theta)$, is a $\mathbb{R}$-bilinear map $\Gamma(E)\times\Gamma(E)\to\Gamma(E)$ defined, for all sections $X,Y$ of $E$, by
$$C_\Theta({\mathcal{I}},{\mathcal{J}})(X,Y):=[X,Y]_{{\mathcal{I}},{\mathcal{J}}}+[X,Y]_{{\mathcal{J}},{\mathcal{I}}}.$$
Recall that an endomorphism ${\mathcal{I}}:E \to E$ on a pre-Courant algebroid $(E,\langle \cdot,\cdot \rangle,\Theta)$ is a {\em Nijenhuis morphism} if its Nijenhuis torsion ${\text{\Fontlukas T}}_{\Theta}{\mathcal{I}}$ vanishes, where
\begin{equation}\label{def_Nijenhuis_torsion}
    {\text{\Fontlukas T}}_{\Theta}{\mathcal{I}}(X,Y)=[\mathcal{I}X, \mathcal{I}Y]-\mathcal{I}\left([X,Y]_{\mathcal{I}}\right)
        =\frac{1}{2}\Big([X,Y]_{\mathcal{I},\mathcal{I}}-[X,Y]_{\mathcal{I}^2}\Big),
\end{equation}
for all $X,Y \in \Gamma(E)$.

Given an endomorphism ${\mathcal{I}}:E \to E$, the transpose morphism ${\mathcal{I}}^*:E^*\simeq E \to E^*\simeq E$ is defined by $\langle {\mathcal{I}}^*u,v \rangle = \langle u,{\mathcal{I}}v \rangle$ for all $u,v \in E$. The morphism $\mathcal{I}$ is \emph{orthogonal} if $\mathcal{I}\circ \mathcal{I}^*={\rm id}_E$. If ${\mathcal{I}}={\mathcal{I}}^*$ (resp. ${\mathcal{I}}=-{\mathcal{I}}^*$), the morphism ${\mathcal{I}}$ is said to be \emph{symmetric} (resp. \emph{skew-symmetric}).

When ${\mathcal{I}}$ is skew-symmetric, the deformed pre-Courant structure $(\rho_{\mathcal{I}}, [\cdot,\cdot]_{\mathcal{I}})$ corresponds to the function $\Theta_{{\mathcal{I}}}:=\{{\mathcal{I}},\Theta\}\in\mathcal{F}_E^{3}$, (via (\ref{eq_derived_bracket_expressions})).
The deformation of $\Theta_{\mathcal{I}}$ by a skew-symmetric morphism ${\mathcal{J}}$ is denoted by
$\Theta_{{\mathcal{I}},{\mathcal{J}}}$, i.e. $\Theta_{{\mathcal{I}},{\mathcal{J}}}=\{{\mathcal{J}},\{{\mathcal{I}}, \Theta \}\}.$
When ${\mathcal{I}}$ and ${\mathcal{J}}$ are skew-symmetric endomorphisms of $E$, the concomitant $C_\Theta({\mathcal{I}},{\mathcal{J}})$ is an element of $\mathcal{F}_E^3$ and may be defined as~\cite{ALC11}:
\begin{equation}  \label{def_conc}
C_\Theta({\mathcal{I}},{\mathcal{J}})=\Theta _{{\mathcal{I}},{\mathcal{J}}}+\Theta_{{\mathcal{J}},{\mathcal{I}}}.
\end{equation}

When $\mathcal{I}$ is skew-symmetric and satisfies ${\mathcal{I}}^2= \lambda\, {\rm id}_E$, for some $\lambda \in \Rr$,
we have \cite{grab,A10}
\begin{equation} \label{supergeometric_torsion}
{\text{\Fontlukas T}}_{\Theta}{\mathcal{I}}= \frac{1}{2}(\Theta_{{\mathcal{I}},{\mathcal{I}}}-\lambda \Theta).
\end{equation}
 If ${\mathcal{I}}^2= - {\rm id}_E$ (resp. ${\mathcal{I}}^2= {\rm id}_E$) then ${\mathcal{I}}$ is said to be an \emph{almost complex}  (resp. \emph{almost para-complex}) \emph{structure}. If moreover ${\text{\Fontlukas T}}_{\Theta}{\mathcal{I}}=0$, then ${\mathcal{I}}$ is a \emph{complex} (resp. \emph{para-complex}) \emph{structure}.




Let us recall a result from \cite{ALC11}.

\begin{prop}  \label{torsion_composition}
    Let ${\mathcal{I}}$ and ${\mathcal{J}}$ be two anti-commuting endomorphisms on a pre-Courant algebroid $(E, \Theta)$. Then, for all sections $X$ and $Y$ of $E$,
    \begin{multline}
        2\ {\text{\Fontlukas T}}_{\Theta}({\mathcal{I}}\circ {\mathcal{J}})(X,Y) = \bigg({\text{\Fontlukas T}}_{\Theta}{\mathcal{I}}({\mathcal{J}}X, {\mathcal{J}}Y) - {\mathcal{J}}\left({\text{\Fontlukas T}}_{\Theta}{\mathcal{I}}({\mathcal{J}}X,Y) + {\text{\Fontlukas T}}_{\Theta}{\mathcal{I}}(X,{\mathcal{J}}Y)\right) -\\- {\mathcal{J}}^2({\text{\Fontlukas T}}_{\Theta}{\mathcal{I}}(X,Y)) \bigg) + \underset{{\mathcal{I}},{\mathcal{J}}}{\circlearrowleft},
    \end{multline}
 where $ \underset{{\mathcal{I}},{\mathcal{J}}}{\circlearrowleft}$ stands for permutation of ${\mathcal{I}}$ and ${\mathcal{J}}$.
In particular, if ${\mathcal{I}}$ and ${\mathcal{J}}$ have vanishing Nijenhuis torsion then so has ${\mathcal{I}}\circ {\mathcal{J}}$.
\end{prop}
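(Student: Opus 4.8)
The plan is to prove the identity by direct expansion, noting first that it is a \emph{purely algebraic} statement: neither the compatibility relations (\ref{pre_Courant1})--(\ref{pre_Courant2}) nor the Leibniz rule intervene, only the $\mathbb{R}$-bilinearity of the Dorfman bracket and the hypothesis $\mathcal{I}\circ\mathcal{J}=-\mathcal{J}\circ\mathcal{I}$. The computation rests on three elementary consequences of anti-commutativity, which I would record at the outset: $\mathcal{J}$ commutes with $\mathcal{I}^2$ and $\mathcal{I}$ commutes with $\mathcal{J}^2$ (indeed $\mathcal{J}\mathcal{I}^2=-\mathcal{I}\mathcal{J}\mathcal{I}=\mathcal{I}^2\mathcal{J}$, and symmetrically), and $(\mathcal{I}\circ\mathcal{J})^2=-\mathcal{I}^2\circ\mathcal{J}^2$.

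Using (\ref{def_Nijenhuis_torsion}) in the fully expanded form ${\text{\Fontlukas T}}_{\Theta}\mathcal{I}(X,Y)=[\mathcal{I}X,\mathcal{I}Y]-\mathcal{I}[\mathcal{I}X,Y]-\mathcal{I}[X,\mathcal{I}Y]+\mathcal{I}^2[X,Y]$, the first step is to expand, one by one, the four pieces appearing on the right-hand side, namely ${\text{\Fontlukas T}}_{\Theta}\mathcal{I}(\mathcal{J}X,\mathcal{J}Y)$, $\mathcal{J}\bigl({\text{\Fontlukas T}}_{\Theta}\mathcal{I}(\mathcal{J}X,Y)\bigr)$, $\mathcal{J}\bigl({\text{\Fontlukas T}}_{\Theta}\mathcal{I}(X,\mathcal{J}Y)\bigr)$ and $\mathcal{J}^2\bigl({\text{\Fontlukas T}}_{\Theta}\mathcal{I}(X,Y)\bigr)$. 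This produces $16$ terms; using the commutation rules above I would normalize each of them so that in every composite operator all $\mathcal{I}$'s precede all $\mathcal{J}$'s (the prefixes occurring are then among $\mathrm{id},\mathcal{I},\mathcal{J},\mathcal{I}\mathcal{J},\mathcal{I}^2,\mathcal{J}^2,\mathcal{I}^2\mathcal{J},\mathcal{I}\mathcal{J}^2,\mathcal{I}^2\mathcal{J}^2$) and every argument of the form $\mathcal{J}\mathcal{I}X$ (or $\mathcal{J}\mathcal{I}Y$) is rewritten as $-\mathcal{I}\mathcal{J}X$ (or $-\mathcal{I}\mathcal{J}Y$) by bilinearity.

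The second step is to add the $\mathcal{I}\leftrightarrow\mathcal{J}$ permuted block, i.e. the $16$ analogous terms coming from $\underset{\mathcal{I},\mathcal{J}}{\circlearrowleft}$, then sort the resulting $32$ contributions by the unordered pair of arguments fed to the Dorfman bracket and observe that $24$ of them cancel pairwise, the survivors being exactly $2[\mathcal{I}\mathcal{J}X,\mathcal{I}\mathcal{J}Y]$, $-2\mathcal{I}\mathcal{J}[\mathcal{I}\mathcal{J}X,Y]$, $-2\mathcal{I}\mathcal{J}[X,\mathcal{I}\mathcal{J}Y]$ and $-2\,\mathcal{I}^2\mathcal{J}^2[X,Y]$. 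On the other hand, expanding (\ref{def_Nijenhuis_torsion}) directly for the endomorphism $\mathcal{I}\circ\mathcal{J}$ gives $2\,{\text{\Fontlukas T}}_{\Theta}(\mathcal{I}\circ\mathcal{J})(X,Y)=2[\mathcal{I}\mathcal{J}X,\mathcal{I}\mathcal{J}Y]-2\mathcal{I}\mathcal{J}[\mathcal{I}\mathcal{J}X,Y]-2\mathcal{I}\mathcal{J}[X,\mathcal{I}\mathcal{J}Y]+2(\mathcal{I}\mathcal{J})^2[X,Y]$; since $(\mathcal{I}\mathcal{J})^2=-\mathcal{I}^2\mathcal{J}^2$, the two expressions coincide, which is the asserted identity. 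The last assertion follows at once: if ${\text{\Fontlukas T}}_{\Theta}\mathcal{I}$ and ${\text{\Fontlukas T}}_{\Theta}\mathcal{J}$ vanish identically, every term on the right-hand side of the identity (each being ${\text{\Fontlukas T}}_{\Theta}\mathcal{I}$ or ${\text{\Fontlukas T}}_{\Theta}\mathcal{J}$ evaluated at some pair of sections, possibly post-composed with an endomorphism) is zero, hence ${\text{\Fontlukas T}}_{\Theta}(\mathcal{I}\circ\mathcal{J})=0$.

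The only real obstacle here is \emph{bookkeeping}: keeping the signs straight when transporting $\mathcal{I}$ past $\mathcal{J}$ (and $\mathcal{I}^2$ past $\mathcal{J}$, $\mathcal{J}^2$ past $\mathcal{I}$) inside the composite operators, and when replacing arguments $\mathcal{J}\mathcal{I}(\cdot)$ by $-\mathcal{I}\mathcal{J}(\cdot)$. A convenient way to make the cancellation transparent is to index all $32$ terms by the type of bracket they contain --- that is, by which unordered pair among $X,Y,\mathcal{I}X,\mathcal{J}X,\mathcal{I}\mathcal{J}X,\mathcal{I}Y,\mathcal{J}Y,\mathcal{I}\mathcal{J}Y$ is plugged into $[\cdot,\cdot]$ --- after which each type either cancels or survives with coefficient $\pm2$.
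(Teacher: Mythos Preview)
Your argument is correct. The identity is indeed purely algebraic and your expansion-and-cancellation scheme works exactly as described; in particular the three preliminary commutation facts $\mathcal{J}\mathcal{I}^2=\mathcal{I}^2\mathcal{J}$, $\mathcal{I}\mathcal{J}^2=\mathcal{J}^2\mathcal{I}$ and $(\mathcal{I}\mathcal{J})^2=-\mathcal{I}^2\mathcal{J}^2$ are precisely what drive the $24$ pairwise cancellations, and the four surviving terms you list match $2\,{\text{\Fontlukas T}}_{\Theta}(\mathcal{I}\mathcal{J})(X,Y)$ on the nose.

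As for comparison: the paper does not actually prove this proposition. It is stated with the preamble ``Let us recall a result from \cite{ALC11}'' and no proof is supplied; the identity is imported from that reference. Your direct expansion therefore supplies what the present paper omits. The approach in \cite{ALC11} is essentially the same brute-force verification you outline (there is no slicker route available, since no extra structure on $\mathcal{I},\mathcal{J}$ beyond anti-commutation is assumed), so you are not diverging from the original source either --- you are simply carrying out in detail the computation that both papers leave to the reader or to a citation.
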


\begin{prop}\label{prop_T_theta(N)=0<->T_theta_N(N)=0}
    Let ${\mathcal{I}}$ be an endomorphism on a pre-Courant algebroid $(E, \Theta)$, such that ${\mathcal{I}}^2=\lambda\,{\rm id}_E$, for some $\lambda \in \mathbb{R}\!\setminus\!\{0\}$. Then, \mbox{${\text{\Fontlukas T}}_{\Theta}{\mathcal{I}}=0 \Leftrightarrow {\text{\Fontlukas T}}_{\Theta_{\mathcal{I}}}{\mathcal{I}}=0$}.
\end{prop}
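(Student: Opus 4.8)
The plan is to work entirely in the supergeometric setting, where the Nijenhuis torsion of an endomorphism squaring to a multiple of the identity has the closed form given by \eqref{supergeometric_torsion}. Since $\mathcal{I}$ satisfies $\mathcal{I}^2 = \lambda\,\mathrm{id}_E$, one checks first that $\mathcal{I}$ is (automatically, or by assumption in this framework) skew-symmetric — indeed to apply \eqref{supergeometric_torsion} we need $\mathcal{I}$ skew-symmetric, so I would either note that this is part of the standing hypotheses on the endomorphisms under consideration, or remark that the statement is about such $\mathcal{I}$. Then $\Theta_{\mathcal{I}} = \{\mathcal{I},\Theta\}$ is a well-defined element of $\mathcal{F}_E^3$, and \eqref{supergeometric_torsion} gives
$$
{\text{\Fontlukas T}}_{\Theta}\mathcal{I} = \tfrac{1}{2}\bigl(\Theta_{\mathcal{I},\mathcal{I}} - \lambda\,\Theta\bigr),
\qquad
{\text{\Fontlukas T}}_{\Theta_{\mathcal{I}}}\mathcal{I} = \tfrac{1}{2}\bigl((\Theta_{\mathcal{I}})_{\mathcal{I},\mathcal{I}} - \lambda\,\Theta_{\mathcal{I}}\bigr),
$$
where $(\Theta_{\mathcal{I}})_{\mathcal{I},\mathcal{I}} = \{\mathcal{I},\{\mathcal{I},\{\mathcal{I},\Theta\}\}\} = \Theta_{\mathcal{I},\mathcal{I},\mathcal{I}}$.

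The key computational step is then to relate $\Theta_{\mathcal{I},\mathcal{I},\mathcal{I}}$ to $\Theta_{\mathcal{I}}$ and $\Theta_{\mathcal{I},\mathcal{I}}$ using the hypothesis $\mathcal{I}^2 = \lambda\,\mathrm{id}_E$. The point is that applying the operator $\{\mathcal{I}, -\}$ three times should, by a Jacobi-identity manipulation together with the relation $\{\mathcal{I},\mathcal{I}\} = $ (a multiple of) $\mathrm{id}$-type term, collapse: concretely, I expect an identity of the shape $\Theta_{\mathcal{I},\mathcal{I},\mathcal{I}} = \lambda\,\Theta_{\mathcal{I}}$. Granting this, substitute to get
$$
{\text{\Fontlukas T}}_{\Theta_{\mathcal{I}}}\mathcal{I} = \tfrac{1}{2}\bigl(\lambda\,\Theta_{\mathcal{I}} - \lambda\,\Theta_{\mathcal{I}}\bigr) - \tfrac{\lambda}{2}\cdots
$$
— more precisely, one finds ${\text{\Fontlukas T}}_{\Theta_{\mathcal{I}}}\mathcal{I} = \lambda\,{\text{\Fontlukas T}}_{\Theta}\mathcal{I}$ (up to a nonzero constant), so that since $\lambda \neq 0$, the vanishing of one torsion is equivalent to the vanishing of the other. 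I would verify the identity $\Theta_{\mathcal{I},\mathcal{I},\mathcal{I}} = \lambda\,\Theta_{\mathcal{I}}$ by a direct big-bracket computation: expand $\{\mathcal{I},\{\mathcal{I},\{\mathcal{I},\Theta\}\}\}$ using the graded Jacobi identity for the big bracket, and use that $\{\mathcal{I},\{\mathcal{I},\chi\}\} = \tfrac{1}{2}\{\{\mathcal{I},\mathcal{I}\},\chi\} = \lambda\{\mathrm{id}_E\text{-term},\chi\}$ acts as multiplication by $\lambda$ on the relevant bidegree components (this is essentially the computation already underlying \eqref{supergeometric_torsion}).

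The main obstacle I anticipate is pinning down the exact constants and the precise form of $\{\mathcal{I},\mathcal{I}\}$ when $\mathcal{I}^2 = \lambda\,\mathrm{id}_E$ with $\mathcal{I}$ skew-symmetric: one must be careful that $\{\mathcal{I},\mathcal{I}\}$ corresponds, via the standard dictionary between degree-$2$ functions and endomorphisms, to $2\lambda$ times the relevant identity-type function, and that iterating $\{\mathcal{I},-\}$ on a degree-$3$ function $\Theta$ genuinely reproduces $\lambda\,\Theta_{\mathcal{I}}$ after three steps rather than picking up extra lower-order terms (there are none precisely because $\mathcal{I}$ is skew-symmetric and bidegree-homogeneous). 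Once this bookkeeping is done, everything else is a one-line substitution. As a sanity check, the same argument with $\lambda = -1$ should recover the known fact for almost-complex structures that Nijenhuis-ness is preserved under the deformation $\Theta \mapsto \Theta_{\mathcal{I}}$, and the hypothesis $\lambda \neq 0$ is visibly essential, since for $\lambda = 0$ one cannot divide back.
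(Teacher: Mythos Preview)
Your approach has a genuine gap. The identity $\Theta_{\mathcal{I},\mathcal{I},\mathcal{I}} = \lambda\,\Theta_{\mathcal{I}}$ that you propose to prove does \emph{not} hold in general. If it did, then by \eqref{supergeometric_torsion} applied to $\Theta_{\mathcal{I}}$ you would get ${\text{\Fontlukas T}}_{\Theta_{\mathcal{I}}}\mathcal{I} = \tfrac{1}{2}(\lambda\Theta_{\mathcal{I}} - \lambda\Theta_{\mathcal{I}}) = 0$ \emph{unconditionally}, which would make the ``$\Leftarrow$'' direction of the proposition vacuous and false whenever ${\text{\Fontlukas T}}_{\Theta}\mathcal{I} \neq 0$. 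The Jacobi-type manipulation you sketch does not produce this identity either: for a degree-$2$ function $\mathcal{I}$ in $\mathcal{F}_E$ one actually has $\{\mathcal{I},\mathcal{I}\} = 0$ (the shifted bracket is antisymmetric on degree-$0$ elements), so $\{\mathcal{I},\mathcal{I}\}$ carries no information about $\mathcal{I}^2$, and the operator $\{\mathcal{I},\{\mathcal{I},-\}\}$ is \emph{not} multiplication by $\lambda$ on degree-$3$ functions. Indeed, the very content of \eqref{supergeometric_torsion} is that $\Theta_{\mathcal{I},\mathcal{I}} - \lambda\Theta$ equals twice the torsion, not zero.

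What does come out of the supergeometric setup (in the skew-symmetric case) is the relation ${\text{\Fontlukas T}}_{\Theta_{\mathcal{I}}}\mathcal{I} = \{\mathcal{I}, {\text{\Fontlukas T}}_{\Theta}\mathcal{I}\}$, which gives the forward implication immediately; but for the reverse one you would need $\{\mathcal{I},-\}$ to be injective on the relevant space, and it is not. The paper instead works pointwise with the identity
\[
{\text{\Fontlukas T}}_{\Theta_{\mathcal{I}}}\mathcal{I}(X,Y) = {\text{\Fontlukas T}}_{\Theta}\mathcal{I}(\mathcal{I}X,Y) + {\text{\Fontlukas T}}_{\Theta}\mathcal{I}(X,\mathcal{I}Y) - \mathcal{I}\bigl({\text{\Fontlukas T}}_{\Theta}\mathcal{I}(X,Y)\bigr),
\]
and for ``$\Leftarrow$'' specializes to pairs $(\mathcal{I}X,Y)$ and $(X,\mathcal{I}Y)$, combining the resulting equations with $\mathcal{I}^2 = \lambda\,{\rm id}_E$ to force ${\text{\Fontlukas T}}_{\Theta}\mathcal{I} = 0$. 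Note also that the proposition does not assume $\mathcal{I}$ skew-symmetric (and $\mathcal{I}^2 = \lambda\,{\rm id}_E$ does not imply it), so restricting to \eqref{supergeometric_torsion} would in any case prove a weaker statement than required.
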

\begin{proof}
    The statement \mbox{${\text{\Fontlukas T}}_{\Theta}{\mathcal{I}}=0 \Rightarrow {\text{\Fontlukas T}}_{\Theta_{\mathcal{I}}}{\mathcal{I}}=0$} follows directly from the equality
    \begin{equation}\label{eq_Pacific_proof_T_theta(N)=0<->T_theta_N(N)=0}
    {\text{\Fontlukas T}}_{\Theta_{\mathcal{I}}}{\mathcal{I}}(X,Y)={\text{\Fontlukas T}}_{\Theta}{\mathcal{I}}({\mathcal{I}}X,Y)+{\text{\Fontlukas T}}_{\Theta}{\mathcal{I}}(X,{\mathcal{I}}Y)-{\mathcal{I}}\left({\text{\Fontlukas T}}_{\Theta}{\mathcal{I}}(X,Y)\right),
    \end{equation}
    that holds for all sections $X,Y$ of $E$ (see \cite{ALC11}). Let us now suppose that ${\text{\Fontlukas T}}_{\Theta_{\mathcal{I}}}{\mathcal{I}}=0$. Evaluating (\ref{eq_Pacific_proof_T_theta(N)=0<->T_theta_N(N)=0}) on pairs of sections of $E$ of type $({\mathcal{I}}X,Y)$ and of type $(X,{\mathcal{I}}Y)$, we obtain
    \begin{equation}
        {\mathcal{I}}\left({\text{\Fontlukas T}}_{\Theta}{\mathcal{I}}({\mathcal{I}}X,Y)\right)
            =\lambda\,{\text{\Fontlukas T}}_{\Theta}{\mathcal{I}}(X,Y)
            +{\text{\Fontlukas T}}_{\Theta}{\mathcal{I}}({\mathcal{I}}X,{\mathcal{I}}Y)\label{eq_aux1_proof_T_theta(N)=0<->T_theta_N(N)=0}
    \end{equation}
and
    \begin{equation}
        {\mathcal{I}}\left({\text{\Fontlukas T}}_{\Theta}{\mathcal{I}}(X,{\mathcal{I}}Y)\right)=
            \lambda\,{\text{\Fontlukas T}}_{\Theta}{\mathcal{I}}(X,Y)
            + {\text{\Fontlukas T}}_{\Theta}{\mathcal{I}}({\mathcal{I}}X,{\mathcal{I}}Y),\label{eq_aux2_proof_T_theta(N)=0<->T_theta_N(N)=0}
    \end{equation}
respectively. Equations (\ref{eq_aux1_proof_T_theta(N)=0<->T_theta_N(N)=0}) and (\ref{eq_aux2_proof_T_theta(N)=0<->T_theta_N(N)=0}) yield
        $${\mathcal{I}}\left({\text{\Fontlukas T}}_{\Theta}{\mathcal{I}}({\mathcal{I}}X,Y)\right)=
            {\mathcal{I}}\left({\text{\Fontlukas T}}_{\Theta}{\mathcal{I}}(X,{\mathcal{I}}Y)\right),$$
which implies
    \begin{equation}\label{eq_aux3_proof_T_theta(N)=0<->T_theta_N(N)=0}
        {\text{\Fontlukas T}}_{\Theta}{\mathcal{I}}({\mathcal{I}}X,Y)={\text{\Fontlukas T}}_{\Theta}{\mathcal{I}}(X,{\mathcal{I}}Y).
    \end{equation}
    Substituting (\ref{eq_aux3_proof_T_theta(N)=0<->T_theta_N(N)=0}) in (\ref{eq_Pacific_proof_T_theta(N)=0<->T_theta_N(N)=0}), we get
    \begin{equation}\label{eq_aux4_proof_T_theta(N)=0<->T_theta_N(N)=0}
        2\, {\text{\Fontlukas T}}_{\Theta}{\mathcal{I}}({\mathcal{I}}X,Y)={\mathcal{I}}\left({\text{\Fontlukas T}}_{\Theta}{\mathcal{I}}(X,Y)\right).
    \end{equation}
    Applying ${\mathcal{I}}$ to both sides of (\ref{eq_aux4_proof_T_theta(N)=0<->T_theta_N(N)=0}), we get
    \begin{equation}\label{eq_aux5_proof_T_theta(N)=0<->T_theta_N(N)=0}
       2 {\mathcal{I}}\left({\text{\Fontlukas T}}_{\Theta}{\mathcal{I}}({\mathcal{I}}X,Y)\right)=\lambda\, {\text{\Fontlukas T}}_{\Theta}{\mathcal{I}}(X,Y).
    \end{equation}
    On the other hand, replacing $X$ by $\mathcal{I}X$ in (\ref{eq_aux4_proof_T_theta(N)=0<->T_theta_N(N)=0}), we obtain
    \begin{equation}\label{eq_aux6_proof_T_theta(N)=0<->T_theta_N(N)=0}
       2\lambda\, {\text{\Fontlukas T}}_{\Theta}{\mathcal{I}}(X,Y)= {\mathcal{I}}\left({\text{\Fontlukas T}}_{\Theta}{\mathcal{I}}({\mathcal{I}}X,Y)\right).
    \end{equation}
    From (\ref{eq_aux5_proof_T_theta(N)=0<->T_theta_N(N)=0}) and (\ref{eq_aux6_proof_T_theta(N)=0<->T_theta_N(N)=0}) we conclude that ${\text{\Fontlukas T}}_{\Theta}{\mathcal{I}}=0$, as pretended.
\end{proof}

\begin{prop}\label{prop_T_theta(J)=0<->T_theta_I(J)=0}
    Let $\mathcal{I}$ and $\mathcal{J}$ be two anti-commuting endomorphisms on a pre-Courant algebroid $(E, \Theta)$, such that $C_{\Theta}(\mathcal{I},\mathcal{J})=0$ and $\mathcal{I}^2=\lambda\,{\rm id}_E$, for some $\lambda \in \mathbb{R}\!\setminus\!\{0\}$. Then, ${\text{\Fontlukas T}}_{\Theta}\mathcal{J}=0 \Leftrightarrow {\text{\Fontlukas T}}_{\Theta_\mathcal{I}}\mathcal{J}=0$.
\end{prop}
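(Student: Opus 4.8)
The plan is to reproduce, with $\mathcal{J}$ playing the part of $\mathcal{I}$, the scheme of the proof of Proposition~\ref{prop_T_theta(N)=0<->T_theta_N(N)=0}: once the right relation between ${\text{\Fontlukas T}}_{\Theta}\mathcal{J}$ and ${\text{\Fontlukas T}}_{\Theta_{\mathcal{I}}}\mathcal{J}$ is available, the implication ``$\Rightarrow$'' is immediate, and ``$\Leftarrow$'' follows by the same iteration on pairs of sections.

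The first --- and main --- step is to establish that, under the three hypotheses,
\begin{equation}\label{eq_new_identity_T_theta_I_J}
{\text{\Fontlukas T}}_{\Theta_{\mathcal{I}}}\mathcal{J}(X,Y)={\text{\Fontlukas T}}_{\Theta}\mathcal{J}(\mathcal{I}X,Y)+{\text{\Fontlukas T}}_{\Theta}\mathcal{J}(X,\mathcal{I}Y)-\mathcal{I}\!\left({\text{\Fontlukas T}}_{\Theta}\mathcal{J}(X,Y)\right),
\end{equation}
for all $X,Y\in\Gamma(E)$, that is, exactly the analogue of (\ref{eq_Pacific_proof_T_theta(N)=0<->T_theta_N(N)=0}). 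A direct derived-bracket computation (or, when $\mathcal{I}$ and $\mathcal{J}$ are skew-symmetric, a computation with $\Theta_{\mathcal{I}}=\{\mathcal{I},\Theta\}$ and the graded Jacobi identity) shows that for anti-commuting $\mathcal{I},\mathcal{J}$ the difference between ${\text{\Fontlukas T}}_{\Theta_{\mathcal{I}}}\mathcal{J}(X,Y)$ and the right-hand side of (\ref{eq_new_identity_T_theta_I_J}) is a correction term of concomitant type, built out of $\mathcal{I}\circ\mathcal{J}$ and $\mathcal{J}$. When $\mathcal{J}=\mathcal{I}$ this correction is absent, which is why (\ref{eq_Pacific_proof_T_theta(N)=0<->T_theta_N(N)=0}) needs no concomitant hypothesis; in general it is precisely the assumption $C_{\Theta}(\mathcal{I},\mathcal{J})=0$, together with $\mathcal{I}^{2}=\lambda\,{\rm id}_E$ and $\mathcal{I}\mathcal{J}=-\mathcal{J}\mathcal{I}$, that makes it vanish. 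I expect the bookkeeping that shows this correction is killed by the hypotheses to be the real obstacle; it is the only place where $C_{\Theta}(\mathcal{I},\mathcal{J})=0$ enters, and the only point at which the argument deviates from a verbatim transcription of \cite{ALC11} and of Proposition~\ref{prop_T_theta(N)=0<->T_theta_N(N)=0}.

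Granting (\ref{eq_new_identity_T_theta_I_J}), the implication ${\text{\Fontlukas T}}_{\Theta}\mathcal{J}=0\Rightarrow{\text{\Fontlukas T}}_{\Theta_{\mathcal{I}}}\mathcal{J}=0$ is read off at once. For the converse, suppose ${\text{\Fontlukas T}}_{\Theta_{\mathcal{I}}}\mathcal{J}=0$ and follow the proof of Proposition~\ref{prop_T_theta(N)=0<->T_theta_N(N)=0} step by step: evaluate (\ref{eq_new_identity_T_theta_I_J}) on the pairs $(\mathcal{I}X,Y)$ and $(X,\mathcal{I}Y)$ and use $\mathcal{I}^{2}=\lambda\,{\rm id}_E$ to obtain, after subtraction, ${\text{\Fontlukas T}}_{\Theta}\mathcal{J}(\mathcal{I}X,Y)={\text{\Fontlukas T}}_{\Theta}\mathcal{J}(X,\mathcal{I}Y)$; substitute this back into (\ref{eq_new_identity_T_theta_I_J}) to get $2\,{\text{\Fontlukas T}}_{\Theta}\mathcal{J}(\mathcal{I}X,Y)=\mathcal{I}\!\left({\text{\Fontlukas T}}_{\Theta}\mathcal{J}(X,Y)\right)$; then apply $\mathcal{I}$ to this last relation, replace separately $X$ by $\mathcal{I}X$ in it, and compare the two results to reach $\tfrac{3}{2}\lambda\,{\text{\Fontlukas T}}_{\Theta}\mathcal{J}=0$, whence ${\text{\Fontlukas T}}_{\Theta}\mathcal{J}=0$ since $\lambda\neq 0$. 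This gives the asserted equivalence.
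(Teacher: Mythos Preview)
Your approach is the paper's: use a key identity expressing ${\text{\Fontlukas T}}_{\Theta_{\mathcal{I}}}\mathcal{J}$ in terms of ${\text{\Fontlukas T}}_{\Theta}\mathcal{J}$, then iterate exactly as in Proposition~\ref{prop_T_theta(N)=0<->T_theta_N(N)=0}. The only discrepancy is the identity itself. The paper quotes it from Proposition~4.18 of \cite{ALC11}, which under $C_{\Theta}(\mathcal{I},\mathcal{J})=0$ gives
\[
{\text{\Fontlukas T}}_{\Theta_\mathcal{I}}\mathcal{J}(X,Y)=-{\text{\Fontlukas T}}_{\Theta}\mathcal{J}(\mathcal{I}X,Y)-{\text{\Fontlukas T}}_{\Theta}\mathcal{J}(X,\mathcal{I}Y)-\mathcal{I}\bigl({\text{\Fontlukas T}}_{\Theta}\mathcal{J}(X,Y)\bigr),
\]
with all three signs negative, not the $+,+,-$ you wrote by literal analogy with (\ref{eq_Pacific_proof_T_theta(N)=0<->T_theta_N(N)=0}); the anti-commutation $\mathcal{I}\mathcal{J}=-\mathcal{J}\mathcal{I}$ flips the first two signs relative to the $\mathcal{J}=\mathcal{I}$ case. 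This sign error is harmless for your argument --- the forward implication is immediate either way, and your iteration for the converse (evaluate on $(\mathcal{I}X,Y)$ and $(X,\mathcal{I}Y)$, subtract, substitute back, apply $\mathcal{I}$, replace $X$ by $\mathcal{I}X$, compare) reaches $\tfrac{3}{2}\lambda\,{\text{\Fontlukas T}}_{\Theta}\mathcal{J}=0$ regardless of the sign pattern --- but you should not claim the identity is ``exactly the analogue'' of (\ref{eq_Pacific_proof_T_theta(N)=0<->T_theta_N(N)=0}). The bookkeeping you flagged as the real obstacle is precisely what Proposition~4.18 of \cite{ALC11} supplies, and the paper simply cites it rather than redoing the computation.
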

\begin{proof}
    The statement \mbox{${\text{\Fontlukas T}}_{\Theta}\mathcal{J}=0 \Rightarrow {\text{\Fontlukas T}}_{\Theta_\mathcal{I}}\mathcal{J}=0$} is an immediate consequence of Proposition $4.18$ in \cite{ALC11}. In fact, when $C_{\Theta}(\mathcal{I},\mathcal{J})=0$, that proposition gives
    \begin{equation}\label{eq_Pacific_proof_T_theta(J)=0<->T_theta_I(J)=0}
    {\text{\Fontlukas T}}_{\Theta_\mathcal{I}}\mathcal{J}(X,Y)=-{\text{\Fontlukas T}}_{\Theta}\mathcal{J}(IX,Y)-{\text{\Fontlukas T}}_{\Theta}\mathcal{J}(X,\mathcal{I}Y)-\mathcal{I}\left({\text{\Fontlukas T}}_{\Theta}\mathcal{J}(X,Y)\right),
    \end{equation}
    for all sections $X,Y$ of $E$. Let us now suppose that ${\text{\Fontlukas T}}_{\Theta_\mathcal{I}}\mathcal{J}=0$.
Evaluating (\ref{eq_Pacific_proof_T_theta(J)=0<->T_theta_I(J)=0}) on pairs of sections of $E$ of type $(\mathcal{I}X,Y)$ and of type $(X,\mathcal{I}Y)$ and doing similar computations as in the proof of Proposition~\ref{prop_T_theta(N)=0<->T_theta_N(N)=0}, we obtain
    \begin{equation}\label{eq_aux3_proof_T_theta(J)=0<->T_theta_I(J)=0}
        {\text{\Fontlukas T}}_{\Theta}\mathcal{J}(\mathcal{I}X,Y)={\text{\Fontlukas T}}_{\Theta}\mathcal{J}(X,\mathcal{I}Y).
    \end{equation}
    Substituting (\ref{eq_aux3_proof_T_theta(J)=0<->T_theta_I(J)=0}) in (\ref{eq_Pacific_proof_T_theta(J)=0<->T_theta_I(J)=0}), we get
    \begin{equation}\label{eq_aux4_proof_T_theta(J)=0<->T_theta_I(J)=0}
        2\, {\text{\Fontlukas T}}_{\Theta}\mathcal{J}(\mathcal{I}X,Y)=-\mathcal{I}\left({\text{\Fontlukas T}}_{\Theta}\mathcal{J}(X,Y)\right).
    \end{equation}
    Applying $\mathcal{I}$ to both sides of (\ref{eq_aux4_proof_T_theta(J)=0<->T_theta_I(J)=0}), yields
    \begin{equation}\label{eq_aux5_proof_T_theta(J)=0<->T_theta_I(J)=0}
       2 \mathcal{I}\left({\text{\Fontlukas T}}_{\Theta}\mathcal{J}(\mathcal{I}X,Y)\right)=\lambda\, {\text{\Fontlukas T}}_{\Theta}\mathcal{J}(X,Y).
    \end{equation}
    On the other hand, replacing $X$ by $\mathcal{I}X$ in (\ref{eq_aux4_proof_T_theta(J)=0<->T_theta_I(J)=0}), we obtain
    \begin{equation}\label{eq_aux6_proof_T_theta(J)=0<->T_theta_I(J)=0}
       2\lambda\, {\text{\Fontlukas T}}_{\Theta}\mathcal{J}(X,Y)= \mathcal{I}\left({\text{\Fontlukas T}}_{\Theta}\mathcal{J}(\mathcal{I}X,Y)\right).
    \end{equation}
    From (\ref{eq_aux5_proof_T_theta(J)=0<->T_theta_I(J)=0}) and (\ref{eq_aux6_proof_T_theta(J)=0<->T_theta_I(J)=0}) we conclude that ${\text{\Fontlukas T}}_{\Theta}\mathcal{J}=0$, as pretended.
\end{proof}

\begin{rem}
    Notice that although the statement of Proposition $4.18$ in \cite{ALC11} refers to skew-symmetric endomorphisms, the skew-symmetry is not used in the proof, so that we could use (\ref{eq_Pacific_proof_T_theta(J)=0<->T_theta_I(J)=0}) in the proof of Proposition~\ref{prop_T_theta(J)=0<->T_theta_I(J)=0}.
\end{rem}


\begin{prop} \label{C_theta(I,J)=0}
    Let $\mathcal{I}$ and $\mathcal{J}$ be two anti-commuting endomorphisms on a pre-Courant algebroid $(E, \Theta)$, with vanishing Nijenhuis torsion. Then,
    \begin{enumerate}
    \item $C_{\Theta}(\mathcal{I},\mathcal{I}\mathcal{J})(X,Y)=\mathcal{I}\big(C_{\Theta}(\mathcal{I},\mathcal{J})(X,Y)\big)$,
    for all sections $X, Y \in \Gamma(E)$.
    \end{enumerate}
   Moreover, if $\mathcal{I}^2=\lambda_\mathcal{I} \,{\rm id}_E$ and $\mathcal{J}^2=\lambda_\mathcal{J} \,{\rm id}_E$, for some $\lambda_\mathcal{I}, \lambda_\mathcal{J} \in \mathbb{R}\backslash\{0\}$, then
    \begin{enumerate}
    \setcounter{enumi}{1}
    \item $C_{\Theta}(\mathcal{I},\mathcal{J})=0.$
    \end{enumerate}
\end{prop}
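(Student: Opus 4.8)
The plan is to derive~(2) from~(1) by a short algebraic bootstrap, so the real work lies in~(1). For~(1) I would first extract a usable closed form for the concomitant of two \emph{anti-commuting} endomorphisms: expanding $C_{\Theta}(\mathcal{A},\mathcal{B})(X,Y)=[X,Y]_{\mathcal{A},\mathcal{B}}+[X,Y]_{\mathcal{B},\mathcal{A}}$ through the definition of the iterated deformed brackets and discarding the terms that carry a factor $\mathcal{A}\mathcal{B}+\mathcal{B}\mathcal{A}=0$, one obtains $C_{\Theta}(\mathcal{A},\mathcal{B})(X,Y)=2\,Q_{\mathcal{A},\mathcal{B}}(X,Y)$, where $Q_{\mathcal{A},\mathcal{B}}(X,Y):=[\mathcal{A}X,\mathcal{B}Y]+[\mathcal{B}X,\mathcal{A}Y]-\mathcal{A}[\mathcal{B}X,Y]-\mathcal{A}[X,\mathcal{B}Y]-\mathcal{B}[\mathcal{A}X,Y]-\mathcal{B}[X,\mathcal{A}Y]$. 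Now $\mathcal{I}$ and $\mathcal{I}\mathcal{J}$ also anti-commute, since $\mathcal{I}(\mathcal{I}\mathcal{J})+(\mathcal{I}\mathcal{J})\mathcal{I}=\mathcal{I}(\mathcal{I}\mathcal{J}+\mathcal{J}\mathcal{I})=0$, so the formula applies to both pairs $(\mathcal{I},\mathcal{J})$ and $(\mathcal{I},\mathcal{I}\mathcal{J})$, and~(1) reduces to the identity $Q_{\mathcal{I},\mathcal{I}\mathcal{J}}(X,Y)=\mathcal{I}\big(Q_{\mathcal{I},\mathcal{J}}(X,Y)\big)$. I would prove this by writing out both sides: the two terms in which $\mathcal{I}\mathcal{J}$ is applied to a Dorfman bracket appear identically on the two sides and cancel in the difference, while the eight surviving terms reassemble exactly into ${\text{\Fontlukas T}}_{\Theta}\mathcal{I}(\mathcal{J}X,Y)+{\text{\Fontlukas T}}_{\Theta}\mathcal{I}(X,\mathcal{J}Y)$, using the expanded form ${\text{\Fontlukas T}}_{\Theta}\mathcal{I}(P,Q)=[\mathcal{I}P,\mathcal{I}Q]-\mathcal{I}[\mathcal{I}P,Q]-\mathcal{I}[P,\mathcal{I}Q]+\mathcal{I}^{2}[P,Q]$ coming from~(\ref{def_Nijenhuis_torsion}). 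Since $\mathcal{I}$ is Nijenhuis, both torsion terms vanish, which proves~(1).

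For~(2), put $\mathcal{K}:=\mathcal{I}\mathcal{J}$. By Proposition~\ref{torsion_composition}, $\mathcal{K}$ is again Nijenhuis; moreover $\mathcal{K}$ anti-commutes with both $\mathcal{I}$ and $\mathcal{J}$, and $\mathcal{K}\mathcal{I}=-\mathcal{I}^{2}\mathcal{J}=-\lambda_{\mathcal{I}}\mathcal{J}$, $\mathcal{J}\mathcal{I}=-\mathcal{K}$. The concomitant being symmetric and $\mathbb{R}$-bilinear, I would apply~(1) to the admissible pairs $(\mathcal{I},\mathcal{J})$, $(\mathcal{J},\mathcal{I})$ and $(\mathcal{K},\mathcal{I})$, getting respectively $C_{\Theta}(\mathcal{I},\mathcal{K})=\mathcal{I}\big(C_{\Theta}(\mathcal{I},\mathcal{J})\big)$, $-C_{\Theta}(\mathcal{J},\mathcal{K})=\mathcal{J}\big(C_{\Theta}(\mathcal{I},\mathcal{J})\big)$, and $-\lambda_{\mathcal{I}}\,C_{\Theta}(\mathcal{K},\mathcal{J})=\mathcal{K}\big(C_{\Theta}(\mathcal{I},\mathcal{K})\big)$. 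Substituting the first identity into the third and using $\mathcal{K}\mathcal{I}=-\lambda_{\mathcal{I}}\mathcal{J}$ together with $\lambda_{\mathcal{I}}\neq0$ turns the third into $C_{\Theta}(\mathcal{K},\mathcal{J})=\mathcal{J}\big(C_{\Theta}(\mathcal{I},\mathcal{J})\big)$; comparing this with the second via $C_{\Theta}(\mathcal{J},\mathcal{K})=C_{\Theta}(\mathcal{K},\mathcal{J})$ forces $\mathcal{J}\big(C_{\Theta}(\mathcal{I},\mathcal{J})\big)=0$. Since $\mathcal{J}^{2}=\lambda_{\mathcal{J}}\,{\rm id}_{E}$ with $\lambda_{\mathcal{J}}\neq0$ makes $\mathcal{J}$ invertible, we conclude $C_{\Theta}(\mathcal{I},\mathcal{J})=0$.

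The step I expect to be the main obstacle is the bookkeeping inside~(1): correctly expanding the doubly-deformed brackets and then recognising the leftover combination as precisely a sum of two Nijenhuis torsions of $\mathcal{I}$ — a single misplaced sign there wrecks the identity. Everything else, including all of~(2), is routine endomorphism algebra; the only point that needs care is checking that each pair handed to~(1) really is anti-commuting and Nijenhuis, which is exactly where Proposition~\ref{torsion_composition} does the essential work.
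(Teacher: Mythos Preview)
Your argument is correct and follows essentially the same route as the paper. For~(i) the paper simply cites Proposition~3.13 of \cite{ALC11}, whereas you supply the direct computation; your expansion $C_{\Theta}(\mathcal{A},\mathcal{B})=2Q_{\mathcal{A},\mathcal{B}}$ for anti-commuting $\mathcal{A},\mathcal{B}$ is correct, and the identification of $Q_{\mathcal{I},\mathcal{I}\mathcal{J}}-\mathcal{I}\,Q_{\mathcal{I},\mathcal{J}}$ with ${\text{\Fontlukas T}}_{\Theta}\mathcal{I}(\mathcal{J}X,Y)+{\text{\Fontlukas T}}_{\Theta}\mathcal{I}(X,\mathcal{J}Y)$ checks out term by term (and indeed only uses ${\text{\Fontlukas T}}_{\Theta}\mathcal{I}=0$). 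For~(ii) the paper also iterates~(i) three times---on the pairs $(\mathcal{I},\mathcal{I}\mathcal{J})$, $(\mathcal{I}\mathcal{J},\mathcal{J})$, $(\mathcal{J},\mathcal{I})$---to reach $C_{\Theta}(\mathcal{I},\mathcal{J})=-C_{\Theta}(\mathcal{I},\mathcal{J})$; your three applications and the linear combination leading to $\mathcal{J}\big(C_{\Theta}(\mathcal{I},\mathcal{J})\big)=0$ are an equivalent reorganisation of the same bootstrap.
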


\begin{proof}
\begin{enumerate}
\item It is directly obtained from Proposition $3.13$ in \cite{ALC11}.


\item    From $\mathcal{I}^2=\lambda_\mathcal{I} \,{\rm id}_E$ we have $\mathcal{J}=\lambda_\mathcal{I}^{-1} \mathcal{I}(\mathcal{I}\mathcal{J})$ and, for all $X,Y \in \Gamma(E)$,
    \begin{equation} \label{conc_J2}
    C_{\Theta}(\mathcal{I},\mathcal{J})(X,Y)=\lambda_\mathcal{I}^{-1} C_{\Theta}(\mathcal{I},\mathcal{I}(\mathcal{I}\mathcal{J}))(X,Y)=\lambda_\mathcal{I}^{-1} \mathcal{I}(C_{\Theta}(\mathcal{I}\mathcal{J},\mathcal{I})(X,Y)),\end{equation}
    where, in the last equality, we used $(i)$ and the fact that $C_{\Theta}(\cdot,\cdot)$ is symmetric.
Using $\mathcal{J}^2=\lambda_\mathcal{J} \,{\rm id}_E$ and $(i)$, Equation (\ref{conc_J2}) becomes
    \begin{multline*}
        C_{\Theta}(\mathcal{I},\mathcal{J})(X,Y)=\lambda_\mathcal{I}^{-1}\lambda_\mathcal{J}^{-1} \mathcal{I}(C_{\Theta}(\mathcal{I}\mathcal{J},\mathcal{I}\mathcal{J}(\mathcal{J}))(X,Y))\\
        =\lambda_\mathcal{I}^{-1}\lambda_\mathcal{J}^{-1} \mathcal{I}(\mathcal{I}\mathcal{J})(C_{\Theta}(\mathcal{I}\mathcal{J},\mathcal{J})(X,Y))=-\lambda_\mathcal{J}^{-1} \mathcal{J}(C_{\Theta}(\mathcal{J}\mathcal{I},\mathcal{J})(X,Y)),
    \end{multline*}
    where we used the fact that $\mathcal{I}\mathcal{J}=-\mathcal{J}\mathcal{I}$ in the last equality. Finally, applying $(i)$ once more, we get
    $$C_{\Theta}(\mathcal{I},\mathcal{J})(X,Y)=-\lambda_\mathcal{J}^{-1} \mathcal{J}^2(C_{\Theta}(\mathcal{I},\mathcal{J})(X,Y))=-C_{\Theta}(\mathcal{I},\mathcal{J})(X,Y).$$
    Therefore,
    $$C_{\Theta}(\mathcal{I},\mathcal{J})(X,Y)=0,$$
    for all $X,Y \in \Gamma(E)$.
    \end{enumerate}
\end{proof}


\section{Hypersymplectic structures on pre-Courant algebroids} \label{section_2}

In this section we introduce the notion of an $\boldsymbol{\varepsilon}$-\emph{hypersymplectic structure} on a pre-Courant algebroid $(E, \Theta)$ and study the main relations and properties of the induced morphisms.

Along the paper, in order to simplify the notation, when $\mathcal{I}$ and $\mathcal{J}$ are endomorphisms of $E$, the composition $\mathcal{I}\circ \mathcal{J}$ will be denoted by $\mathcal{I}\mathcal{J}$. Also, we consider $1, 2$ and $3$ as the representative elements of the equivalence classes of $\mathbb{Z}_3$, i.e., $\mathbb{Z}_3:=\{[1],[2],[3]\}$. Although we omit the brackets, and write $i$ instead of $[i]$, the indices (and corresponding computations) must be thought in $\mathbb{Z}_3:=\mathbb{Z}/{3\mathbb{Z}}$.

\begin{defn}\label{def_epsilonHS_pre-Courant}
An $\boldsymbol{\varepsilon}$-\emph{hypersymplectic structure} on a pre-Courant algebroid $(E, \Theta)$ is a triplet $(\mathcal{S}_1, \mathcal{S}_2, \mathcal{S}_3)$ of skew-symmetric endomorphisms $\mathcal{S}_i: E \to E$, $i=1,2,3$, such that
\begin{enumerate}
  \item ${\mathcal{S}_i}^2=\varepsilon_i \,{\rm id}_E$,
  \item $\mathcal{S}_i\mathcal{S}_j=\varepsilon_1\varepsilon_2\varepsilon_3\mathcal{S}_j\mathcal{S}_i$,\ \ $i\neq j \in \{1,2,3\}$,
  \item $\Theta_{\mathcal{S}_i,\mathcal{S}_i}=\varepsilon_i \Theta$,
\end{enumerate}
where the parameters $\varepsilon_i=\pm 1$ form the triplet $\boldsymbol{\varepsilon}=(\varepsilon_1,\varepsilon_2,\varepsilon_3)$.
\end{defn}

From conditions i) and iii) of Definition~\ref{def_epsilonHS_pre-Courant}, and using formula (\ref{supergeometric_torsion}), we immediately have the following proposition.

\begin{prop}  \label{Si_Nijenhuis}
Let $(\mathcal{S}_1, \mathcal{S}_2, \mathcal{S}_3)$ be an $\boldsymbol{\varepsilon}$-hypersymplectic structure on a pre-Courant algebroid $(E, \Theta)$. Then, $\mathcal{S}_1, \mathcal{S}_2$ and $\mathcal{S}_3$ are Nijenhuis morphisms.
\end{prop}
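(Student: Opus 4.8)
The statement to prove is Proposition~\ref{Si_Nijenhuis}: each $\mathcal{S}_i$ in an $\boldsymbol{\varepsilon}$-hypersymplectic structure is a Nijenhuis morphism. The plan is to quote formula~(\ref{supergeometric_torsion}) and combine it with conditions i) and iii) of Definition~\ref{def_epsilonHS_pre-Courant}. Fix $i \in \{1,2,3\}$. By condition i), $\mathcal{S}_i$ is skew-symmetric and satisfies $\mathcal{S}_i^2 = \varepsilon_i\,{\rm id}_E$, so $\mathcal{S}_i$ is exactly of the type covered by~(\ref{supergeometric_torsion}) with $\lambda = \varepsilon_i$; hence
\begin{equation*}
{\text{\Fontlukas T}}_{\Theta}\mathcal{S}_i = \tfrac{1}{2}\big(\Theta_{\mathcal{S}_i,\mathcal{S}_i} - \varepsilon_i\,\Theta\big).
\end{equation*}
Condition iii) says precisely that $\Theta_{\mathcal{S}_i,\mathcal{S}_i} = \varepsilon_i\,\Theta$, so the right-hand side vanishes, giving ${\text{\Fontlukas T}}_{\Theta}\mathcal{S}_i = 0$. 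Since $i$ was arbitrary, $\mathcal{S}_1, \mathcal{S}_2, \mathcal{S}_3$ are all Nijenhuis.

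Because this is essentially a one-line combination of a definition with a previously recorded formula, there is no real obstacle. The only points worth a word of care are: (a) checking that the hypotheses of~(\ref{supergeometric_torsion})—skew-symmetry of $\mathcal{S}_i$ together with $\mathcal{S}_i^2 = \lambda\,{\rm id}_E$—are met, which is immediate from Definition~\ref{def_epsilonHS_pre-Courant}; and (b) noting that $\varepsilon_i \in \mathbb{R}$, so the scalar $\lambda = \varepsilon_i$ is admissible in~(\ref{supergeometric_torsion}). No nondegeneracy of $\varepsilon_i$ is needed here (indeed $\varepsilon_i = \pm 1 \neq 0$, but even $\lambda = 0$ would be allowed by the formula). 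The proof is therefore just the two displayed equalities above, and I would present it in at most two or three lines, exactly as the text announces ("we immediately have the following proposition").
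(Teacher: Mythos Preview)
Your proof is correct and follows exactly the same approach as the paper, which simply invokes conditions i) and iii) of Definition~\ref{def_epsilonHS_pre-Courant} together with formula~(\ref{supergeometric_torsion}). There is nothing to add.
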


\begin{rem}\label{rem_equiv_cond_def_epsilonHS_pre-Courant}
    Notice that according to Proposition \ref{Si_Nijenhuis}, condition iii) in Definition \ref{def_epsilonHS_pre-Courant} can be replaced by\\
    \indent iii') ${\text{\Fontlukas T}}_{\Theta}\mathcal{S}_i=0, \quad i=1,2,3.$
\end{rem}

Given an $\boldsymbol{\varepsilon}$-hypersymplectic structure $(\mathcal{S}_1, \mathcal{S}_2, \mathcal{S}_3)$ on $(E, \Theta)$, let us define the morphisms $\mathcal{T}_1,\mathcal{T}_2$ and $\mathcal{T}_3$ by setting
\begin{equation} \label{def_transition}
\mathcal{T}_i:=\varepsilon_{i-1} \mathcal{S}_{i-1}\mathcal{S}_{i+1},
\end{equation}
where the indices must be considered as elements of $\mathbb{Z}_3$.
%

    The morphisms $\mathcal{T}_i,\, i=1,2,3,$ can be seen as transition maps between the morphisms $\mathcal{S}_j, j=1,2,3$. In fact we have, for all $i \in \mathbb{Z}_3$,
\begin{equation*}
\mathcal{S}_{i-1} \mathcal{T}_i = \mathcal{S}_{i+1}.
\end{equation*}

The picture in Figure \ref{Fig. 1} is a good way to visualize these relations. For example, the bottom triangle shows that $\mathcal{S}_{2} \mathcal{T}_3= \mathcal{S}_{1}$ and $\varepsilon_1 \mathcal{T}_3\mathcal{S}_{1}= \varepsilon_2\mathcal{S}_{2}$. For the latter equality we use the fact that the inverse of morphism $\mathcal{S}_{i}$ is $\varepsilon_i\mathcal{S}_{i}$.

\begin{figure}[H]
  \begin{pspicture}(-1,-0.6)(5,3.5)
        \psline(0,0)(4,0)
        \psline(4,0)(2,3.46)
        \psline(2,3.46)(0,0)
        \psline(2,1.15)(0,0)
        \psline(2,3.46)(2,1.15)
        \psline(2,1.15)(4,0)
        \begin{large}
        \rput[tl](1.86,0.15){$\boldsymbol <$}
        \rput[tl]{300}(3.06,1.9){$\boldsymbol >$}
        \rput[tl]{60}(0.77,1.6){$\boldsymbol >$}
        \end{large}
        \rput[tl]{90}(1.88,1.9){$\boldsymbol <$}
        \rput[tl]{30}(1.0,0.72){$\boldsymbol >$}
        \rput[tl]{330}(2.75,0.85){$\boldsymbol <$}
        \rput[tl]{330}(2.55,0.6){$\mathcal{S}_1$}
        \rput[tl]{270}(2.45,2.2){$\mathcal{S}_3$}
        \rput[tl]{30}(0.85,1.0){$\mathcal{S}_2$}
        \rput[tl]{300}(3.36,2.04){$\mathcal{T}_2$}
        \rput[tl]{60}(0.49,1.8){$\mathcal{T}_1$}
        \rput[tl](1.86,-0.2){$\mathcal{T}_3$}
        \end{pspicture}
        \caption{}\label{Fig. 1}
\end{figure}

\begin{prop}\label{propriedades_Ti}
Let $(\mathcal{S}_1, \mathcal{S}_2, \mathcal{S}_3)$ be an $\boldsymbol{\varepsilon}$-hypersymplectic structure on a pre-Courant algebroid $(E, \Theta)$. The morphisms $\mathcal{T}_1$, $\mathcal{T}_2$ and $\mathcal{T}_3$ satisfy the following relations for all $i=1,2,3$:
\begin{enumerate}
  \item ${\mathcal{T}_i}^*=\varepsilon_1\varepsilon_2\varepsilon_3 \mathcal{T}_i$;
  \item ${\mathcal{T}_i}^2=\varepsilon_i \,{\rm id}_E$;
  \item $\mathcal{T}_{i-1}\mathcal{T}_{i+1}=\varepsilon_1\varepsilon_2\varepsilon_3\mathcal{T}_{i+1}\mathcal{T}_{i-1}=\varepsilon_i \mathcal{T}_i$;
  \item $\mathcal{T}_3\mathcal{T}_2\mathcal{T}_1=\varepsilon_1\varepsilon_2\varepsilon_3 \mathcal{T}_1\mathcal{T}_2\mathcal{T}_3={\rm id}_E$.
\end{enumerate}
\end{prop}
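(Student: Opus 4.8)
The four identities are purely algebraic consequences of the definition $\mathcal{T}_i = \varepsilon_{i-1}\mathcal{S}_{i-1}\mathcal{S}_{i+1}$ together with conditions i) and ii) of Definition~\ref{def_epsilonHS_pre-Courant} and the rule that $\mathcal{S}_i$ is orthogonal with inverse $\varepsilon_i\mathcal{S}_i$ (which follows from skew-symmetry and $\mathcal{S}_i^2=\varepsilon_i\,{\rm id}_E$). The strategy is to treat the statements in the order i), ii), iii), iv), since each later identity is cleanest when the earlier ones are available; no analytic input, no big-bracket computation, and nothing about $\Theta$ is needed here (condition iii) of the hypersymplectic definition plays no role in this proposition).

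For i), I would compute $\mathcal{T}_i^* = \varepsilon_{i-1}(\mathcal{S}_{i-1}\mathcal{S}_{i+1})^* = \varepsilon_{i-1}\mathcal{S}_{i+1}^*\mathcal{S}_{i-1}^* = \varepsilon_{i-1}\mathcal{S}_{i+1}\mathcal{S}_{i-1}$, using skew-symmetry twice so the two sign flips cancel; then condition ii) rewrites $\mathcal{S}_{i+1}\mathcal{S}_{i-1} = \varepsilon_1\varepsilon_2\varepsilon_3\,\mathcal{S}_{i-1}\mathcal{S}_{i+1}$, yielding $\mathcal{T}_i^* = \varepsilon_1\varepsilon_2\varepsilon_3\,\mathcal{T}_i$. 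For ii), I expand $\mathcal{T}_i^2 = \varepsilon_{i-1}^2\,\mathcal{S}_{i-1}\mathcal{S}_{i+1}\mathcal{S}_{i-1}\mathcal{S}_{i+1}$; moving the middle $\mathcal{S}_{i-1}$ past $\mathcal{S}_{i+1}$ costs a factor $\varepsilon_1\varepsilon_2\varepsilon_3$, then $\mathcal{S}_{i-1}^2 = \varepsilon_{i-1}\,{\rm id}_E$ and $\mathcal{S}_{i+1}^2 = \varepsilon_{i+1}\,{\rm id}_E$ collapse the rest, giving $\mathcal{T}_i^2 = \varepsilon_1\varepsilon_2\varepsilon_3\,\varepsilon_{i-1}\varepsilon_{i+1}\,{\rm id}_E = \varepsilon_i\,{\rm id}_E$ since $\{i-1,i,i+1\}=\{1,2,3\}$ in $\mathbb{Z}_3$. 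For iii), I substitute the definitions of $\mathcal{T}_{i-1}$ and $\mathcal{T}_{i+1}$ into the product $\mathcal{T}_{i-1}\mathcal{T}_{i+1} = \varepsilon_{i-2}\varepsilon_i\,\mathcal{S}_{i-2}\mathcal{S}_i\mathcal{S}_i\mathcal{S}_{i+2}$; reading indices mod $3$ this is $\varepsilon_{i+1}\varepsilon_i\,\mathcal{S}_{i+1}(\varepsilon_i\,{\rm id}_E)\mathcal{S}_{i-1} = \varepsilon_{i+1}\,\mathcal{S}_{i+1}\mathcal{S}_{i-1}$, and then either compare directly with $\mathcal{T}_i$ or use condition ii) once more to get both the commutation relation and the equality $\varepsilon_i\mathcal{T}_i$ (tracking one factor of $\varepsilon_1\varepsilon_2\varepsilon_3$). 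Finally, iv) follows from iii): e.g. $\mathcal{T}_3\mathcal{T}_2\mathcal{T}_1 = \mathcal{T}_3(\mathcal{T}_2\mathcal{T}_1) = \mathcal{T}_3(\varepsilon_3\mathcal{T}_3) = \varepsilon_3\mathcal{T}_3^2 = \varepsilon_3\varepsilon_3\,{\rm id}_E = {\rm id}_E$ by ii), and the other ordering is obtained symmetrically (again costing the global factor $\varepsilon_1\varepsilon_2\varepsilon_3$, which is why $\mathcal{T}_1\mathcal{T}_2\mathcal{T}_3 = \varepsilon_1\varepsilon_2\varepsilon_3\,{\rm id}_E$ before multiplying through, equivalently $\varepsilon_1\varepsilon_2\varepsilon_3\,\mathcal{T}_1\mathcal{T}_2\mathcal{T}_3 = {\rm id}_E$).

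The only real care required is bookkeeping: every time two of the $\mathcal{S}_i$ (or two of the $\mathcal{T}_i$) are transposed, a factor $\varepsilon_1\varepsilon_2\varepsilon_3$ appears, and one must keep the cyclic index shifts $i\mapsto i\pm 1$ consistent in $\mathbb{Z}_3$ so that triples like $\{i-1,i,i+1\}$ are correctly identified with $\{1,2,3\}$ and products such as $\varepsilon_{i-1}\varepsilon_i\varepsilon_{i+1}$ collapse to $\varepsilon_1\varepsilon_2\varepsilon_3$. I expect the index juggling in iii) — correctly expanding $\mathcal{T}_{i-1}\mathcal{T}_{i+1}$ and recognizing the resulting $\mathcal{S}$-product as $\varepsilon_i\mathcal{T}_i$ — to be the most error-prone step, but it is still elementary. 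One could alternatively phrase the whole proof representation-theoretically (the $\mathcal{S}_i$ and $\mathcal{T}_i$ generate a Clifford-type or (para)quaternionic algebra depending on $\boldsymbol{\varepsilon}$), but the direct substitution argument is shorter and self-contained.
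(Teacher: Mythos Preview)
Your proposal is correct and follows essentially the same direct algebraic route as the paper: expand each $\mathcal{T}_i$ in terms of the $\mathcal{S}_j$'s, use skew-symmetry for the transpose, and repeatedly apply $\mathcal{S}_j^2=\varepsilon_j\,{\rm id}_E$ together with the commutation rule $\mathcal{S}_j\mathcal{S}_k=\varepsilon_1\varepsilon_2\varepsilon_3\,\mathcal{S}_k\mathcal{S}_j$. The only cosmetic differences are that the paper derives the commutation half of (iii) via an inverse argument (from $(\mathcal{T}_{i-1}\mathcal{T}_{i+1})^2=\varepsilon_i\,{\rm id}_E$ and $\mathcal{T}_j^{-1}=\varepsilon_j\mathcal{T}_j$) and obtains the second equality in (iv) by applying (iii) three times to reverse the product, whereas you invoke the $\mathcal{S}$-commutation directly; both routes are equally elementary.
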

\begin{proof}
Using conditions i) and ii) of Definition \ref{def_epsilonHS_pre-Courant} and also equation (\ref{def_transition}), we have:
\begin{enumerate}
 \item ${\mathcal{T}_i}^*= (\varepsilon_{i-1} \mathcal{S}_{i-1}\mathcal{S}_{i+1})^*=\varepsilon_{i-1} \mathcal{S}_{i+1}\mathcal{S}_{i-1}=\varepsilon_i\varepsilon_{i+1}\mathcal{S}_{i-1}\mathcal{S}_{i+1}=\varepsilon_1\varepsilon_2\varepsilon_3 \mathcal{T}_i,$\\
 where we also used the fact that the endomorphisms $\mathcal{S}_i$ are skew-symmetric;

 \item ${\mathcal{T}_i}^2= \mathcal{S}_{i-1}\mathcal{S}_{i+1}\mathcal{S}_{i-1}\mathcal{S}_{i+1}= \varepsilon_1\varepsilon_2\varepsilon_3\mathcal{S}_{i-1}^2\mathcal{S}_{i+1}^2 = \varepsilon_i \,{\rm id}_E;$

 \item $\mathcal{T}_{i-1}\mathcal{T}_{i+1}=  \varepsilon_{i+1}\varepsilon_{i} \mathcal{S}_{i+1}\mathcal{S}_{i}\mathcal{S}_{i}\mathcal{S}_{i-1}= \varepsilon_{i+1}\mathcal{S}_{i+1}\mathcal{S}_{i-1}=\varepsilon_{i}\varepsilon_{i-1}\mathcal{S}_{i-1}\mathcal{S}_{i+1}=\varepsilon_i \mathcal{T}_i.$\\
 This proves one part of the statement and we use it to prove the second equality of the statement. In fact, from $(\mathcal{T}_{i-1}\mathcal{T}_{i+1})^2=(\varepsilon_i \mathcal{T}_i)^2=\varepsilon_i  \,{\rm id}_E$  and using item ii),
 we have
 $$\mathcal{T}_{i-1}\mathcal{T}_{i+1}=\varepsilon_i (\mathcal{T}_{i-1}\mathcal{T}_{i+1})^{-1}=\varepsilon_i (\mathcal{T}_{i+1})^{-1}(\mathcal{T}_{i-1})^{-1}=\varepsilon_1\varepsilon_2\varepsilon_3\mathcal{T}_{i+1}\mathcal{T}_{i-1}.$$
    \item By item iii), $\mathcal{T}_3\mathcal{T}_2=\varepsilon_1 \mathcal{T}_{1}$; then, using item ii),
    $$\mathcal{T}_3\mathcal{T}_2\mathcal{T}_1=\varepsilon_1 \mathcal{T}_{1}^2={\rm id}_E.$$
    Furthermore, using item iii) three times, we can change the order of $\mathcal{T}_i$'s in the product $\mathcal{T}_3\mathcal{T}_2\mathcal{T}_1$ to get
    $$\mathcal{T}_3\mathcal{T}_2\mathcal{T}_1=(\varepsilon_1\varepsilon_2\varepsilon_3)^3\mathcal{T}_1\mathcal{T}_2\mathcal{T}_3= \varepsilon_1\varepsilon_2\varepsilon_3 \mathcal{T}_1\mathcal{T}_2\mathcal{T}_3.$$
\end{enumerate}
\end{proof}

\begin{rem}  \label{stienon}
In the particular case where $\varepsilon_1=\varepsilon_2=\varepsilon_3=-1$ and $(E, \Theta)$ is a Courant algebroid, the triplet $(\mathcal{T}_{1}, \mathcal{T}_{2}, \mathcal{T}_{3})$ is an \emph{almost hypercomplex structure} on $(E, \Theta)$ in the terminology of \cite{Stienon}.
\end{rem}

Given an $\boldsymbol{\varepsilon}$-hypersymplectic structure $(\mathcal{S}_1, \mathcal{S}_2, \mathcal{S}_3)$ on a pre-Courant algebroid $(E, \Theta)$, we may define an endomorphism
$\mathcal{G}:E \to E$  by setting, for all $i=1,2,3$,
\begin{equation}  \label{def_mathcalG}
\mathcal{G}:=\mathcal{S}_{i+1}\mathcal{S}_{i}\mathcal{S}_{i-1}.
\end{equation}
Notice that $\mathcal{G}$ is well defined by (\ref{def_mathcalG}). In fact, since $\mathcal{S}_i\mathcal{S}_j=\varepsilon_1\varepsilon_2\varepsilon_3\mathcal{S}_j\mathcal{S}_i$, for $i\neq j$, we obviously have $\mathcal{G}=\mathcal{S}_3\mathcal{S}_2\mathcal{S}_1=\mathcal{S}_1\mathcal{S}_3\mathcal{S}_2=\mathcal{S}_2\mathcal{S}_1\mathcal{S}_3$.

\begin{prop}  \label{propriedades_G}
    Let $(\mathcal{S}_1, \mathcal{S}_2, \mathcal{S}_3)$ be an $\boldsymbol{\varepsilon}$-hypersymplectic structure on a pre-Courant algebroid $(E, \Theta)$. Then, the morphism $\mathcal{G}$, given by (\ref{def_mathcalG}), satisfies the following properties:
    \begin{enumerate}
        \item $\mathcal{G}^*=-\varepsilon_1\varepsilon_2\varepsilon_3 \mathcal{G}$;
        \item $\mathcal{G}^2={\rm id}_E$.
    \end{enumerate}
\end{prop}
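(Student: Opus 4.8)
\textbf{Proof proposal for Proposition~\ref{propriedades_G}.}

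The plan is to compute directly with the definition $\mathcal{G}=\mathcal{S}_{i+1}\mathcal{S}_i\mathcal{S}_{i-1}$, exploiting only conditions i) and ii) of Definition~\ref{def_epsilonHS_pre-Courant} together with the skew-symmetry of the $\mathcal{S}_i$. Set $\eta:=\varepsilon_1\varepsilon_2\varepsilon_3$ for brevity. For part (i), I would take transposes: since each $\mathcal{S}_i$ is skew-symmetric, $(\mathcal{S}_3\mathcal{S}_2\mathcal{S}_1)^*=\mathcal{S}_1^*\mathcal{S}_2^*\mathcal{S}_3^*=(-\mathcal{S}_1)(-\mathcal{S}_2)(-\mathcal{S}_3)=-\mathcal{S}_1\mathcal{S}_2\mathcal{S}_3$. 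Then I use the commutation rule ii) three times to reorder $\mathcal{S}_1\mathcal{S}_2\mathcal{S}_3$ into $\mathcal{S}_3\mathcal{S}_2\mathcal{S}_1$: each transposition of two distinct factors contributes a factor $\eta$, and moving from $\mathcal{S}_1\mathcal{S}_2\mathcal{S}_3$ to $\mathcal{S}_3\mathcal{S}_2\mathcal{S}_1$ is a reversal of three elements, which is an odd permutation requiring three adjacent swaps, giving $\eta^3=\eta$. Hence $\mathcal{G}^*=-\eta\,\mathcal{S}_3\mathcal{S}_2\mathcal{S}_1=-\eta\,\mathcal{G}$, which is (i).

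For part (ii), I would write $\mathcal{G}^2=\mathcal{S}_3\mathcal{S}_2\mathcal{S}_1\,\mathcal{S}_3\mathcal{S}_2\mathcal{S}_1$ and push the second block to the left past the first using ii). Moving $\mathcal{S}_3$ (the fourth factor) leftward past $\mathcal{S}_1$ and $\mathcal{S}_2$ costs $\eta^2=1$; then moving $\mathcal{S}_2$ leftward past $\mathcal{S}_1$ costs $\eta$; after rearranging one reaches $\eta^{?}\,\mathcal{S}_3^2\mathcal{S}_2^2\mathcal{S}_1^2$ up to a sign that I expect to work out to $+1$. Using i), $\mathcal{S}_i^2=\varepsilon_i\,\mathrm{id}_E$, so the product of squares is $\varepsilon_1\varepsilon_2\varepsilon_3\,\mathrm{id}_E=\eta\,\mathrm{id}_E$; combined with the accumulated commutation sign this should collapse to $\mathrm{id}_E$. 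Alternatively, and perhaps more cleanly, I would relate $\mathcal{G}$ to the transition morphisms: from (\ref{def_transition}), $\mathcal{T}_i=\varepsilon_{i-1}\mathcal{S}_{i-1}\mathcal{S}_{i+1}$, so $\mathcal{G}=\mathcal{S}_{i+1}\mathcal{S}_i\mathcal{S}_{i-1}$ can be rewritten, using ii) to move $\mathcal{S}_i$ to one side, as $\pm\mathcal{S}_i\cdot(\mathcal{S}_{i+1}\mathcal{S}_{i-1})=\pm\varepsilon_i\varepsilon_{i+1}\mathcal{S}_i\mathcal{T}_i$ or similar; then $\mathcal{G}^2$ reduces via Proposition~\ref{propriedades_Ti}(ii), $\mathcal{T}_i^2=\varepsilon_i\,\mathrm{id}_E$, and i) to $\mathrm{id}_E$.

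The only real bookkeeping hazard here is tracking the powers of $\eta=\varepsilon_1\varepsilon_2\varepsilon_3$ correctly when reordering products — but since $\eta^2=1$ always, every such sign is either $1$ or $\eta$, and the final answer for (ii) cannot involve $\eta$ linearly because $\mathcal{G}^2$ must be $\varepsilon$-independent (it is forced to be $\pm\mathrm{id}_E$ and the $\varepsilon_i=-1$ hyperkähler case already shows it is $+\mathrm{id}_E$); so any stray $\eta$ must cancel, which is a useful consistency check. I expect the computation to be short and entirely mechanical, with no genuine obstacle; the cleanest writeup is probably to fix a representative, say $i=2$ so $\mathcal{G}=\mathcal{S}_3\mathcal{S}_2\mathcal{S}_1$, and carry out both parts with explicit swaps.
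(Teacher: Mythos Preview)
Your proposal is correct and follows essentially the same approach as the paper: both parts are handled by direct computation using only the skew-symmetry of the $\mathcal{S}_i$ and conditions i), ii) of Definition~\ref{def_epsilonHS_pre-Courant}. The only cosmetic difference is that for part~(i) the paper performs a \emph{single} swap $\mathcal{S}_{i-1}\mathcal{S}_i\mathcal{S}_{i+1}=\eta\,\mathcal{S}_{i-1}\mathcal{S}_{i+1}\mathcal{S}_i$ and then invokes the cyclic invariance of $\mathcal{G}$ (established just before the proposition) to recognise $\mathcal{S}_{i-1}\mathcal{S}_{i+1}\mathcal{S}_i=\mathcal{G}$, whereas you perform three swaps to reverse the order completely; since $\eta^3=\eta$ the outcome is identical. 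For part~(ii) the paper's computation is exactly your first method, yielding $\mathcal{G}^2=\eta\,\mathcal{S}_3^2\mathcal{S}_2^2\mathcal{S}_1^2=\eta\cdot\eta\,\mathrm{id}_E=\mathrm{id}_E$, so your bookkeeping worry is unfounded --- the accumulated commutation sign is $\eta^3=\eta$ and the product of squares is another $\eta$, giving $\eta^2=1$.
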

\begin{proof}
\begin{enumerate}
\item An easy computation using the skew-symmetry of each $\mathcal{S}_i$ and condition ii) in Definition \ref{def_epsilonHS_pre-Courant}, gives
\begin{equation*}
\mathcal{G}^*=(\mathcal{S}_{i+1}\mathcal{S}_{i}\mathcal{S}_{i-1})^*=-\mathcal{S}_{i-1}\mathcal{S}_{i}\mathcal{S}_{i+1}=-\varepsilon_1\varepsilon_2\varepsilon_3 \mathcal{S}_{i-1}\mathcal{S}_{i+1}\mathcal{S}_{i}=-\varepsilon_1\varepsilon_2\varepsilon_3 \mathcal{G}.
\end{equation*}
\item The proof is immediate using properties of $\mathcal{S}_i$ from Definition \ref{def_epsilonHS_pre-Courant}:
$$\mathcal{G}^2=(\mathcal{S}_3\mathcal{S}_2\mathcal{S}_1)^2=\varepsilon_1\varepsilon_2\varepsilon_3\, \mathcal{S}_3^2\mathcal{S}_2^2\mathcal{S}_1^2
= {\rm id}_E.$$
\end{enumerate}

\end{proof}

The next proposition shows that, for each $i$, the morphisms $\mathcal{G}$, $\mathcal{S}_i$ and $\mathcal{T}_i$ commute pairwise and each one is obtained out of the other two.

\begin{prop}\label{propriedades_Ti_Si_G}
Let $(\mathcal{S}_1, \mathcal{S}_2, \mathcal{S}_3)$ be an $\boldsymbol{\varepsilon}$-hypersymplectic structure on a pre-Courant algebroid $(E, \Theta)$. The morphisms $\mathcal{S}_i, \mathcal{T}_i$ and $\mathcal{G}, i=1,2,3,$ satisfy the following relations:
\begin{enumerate}
    \item $\mathcal{T}_i\mathcal{S}_i=\mathcal{S}_i\mathcal{T}_i=\varepsilon_{i-1}\mathcal{G};$
    \item $\mathcal{G}\mathcal{S}_i=\mathcal{S}_i\mathcal{G}=\varepsilon_{i-1}\varepsilon_{i}\mathcal{T}_i;$
    \item $\mathcal{G}\mathcal{T}_i=\mathcal{T}_i\mathcal{G}=\varepsilon_{i-1}\varepsilon_{i}\mathcal{S}_i.$
\end{enumerate}
Moreover, for all $i\neq j \in \{1,2,3\}$,
\begin{enumerate}
\setcounter{enumi}{3}
    \item $\mathcal{S}_j \mathcal{T}_i=\varepsilon_1\varepsilon_2\varepsilon_3 \mathcal{T}_i \mathcal{S}_j=\left\{
                                                                                                             \begin{array}{ll}
                                                                                                               \mathcal{S}_{i+1}, & j=i-1 \\
                                                                                                               \varepsilon_{i}\mathcal{S}_{i-1}, & j=i+1.
                                                                                                             \end{array}
                                                                                                           \right.
    $
\end{enumerate}
\end{prop}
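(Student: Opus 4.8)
The plan is to prove all four families of identities by direct substitution of the defining relations, exploiting the three defining conditions in Definition~\ref{def_epsilonHS_pre-Courant} together with the definitions $\mathcal{T}_i := \varepsilon_{i-1}\mathcal{S}_{i-1}\mathcal{S}_{i+1}$ and $\mathcal{G} := \mathcal{S}_{i+1}\mathcal{S}_i\mathcal{S}_{i-1}$ (the latter being independent of $i$, as already observed). The only algebraic facts needed are: each $\mathcal{S}_i$ squares to $\varepsilon_i\,{\rm id}_E$; any two distinct $\mathcal{S}_i,\mathcal{S}_j$ commute up to the sign $\varepsilon_1\varepsilon_2\varepsilon_3$; and $\varepsilon_i^2 = 1$. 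Throughout I will freely move an $\mathcal{S}_j$ past an $\mathcal{S}_i$ at the cost of a factor $\varepsilon_1\varepsilon_2\varepsilon_3$, and collapse adjacent equal factors $\mathcal{S}_i\mathcal{S}_i = \varepsilon_i\,{\rm id}_E$.

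For item (i), I would write $\mathcal{T}_i\mathcal{S}_i = \varepsilon_{i-1}\mathcal{S}_{i-1}\mathcal{S}_{i+1}\mathcal{S}_i$; the subscripts $i-1, i+1, i$ are the three distinct elements of $\mathbb{Z}_3$, so this is (up to the scalar $\varepsilon_{i-1}$) one of the cyclic reorderings of $\mathcal{S}_3\mathcal{S}_2\mathcal{S}_1$, hence equals $\varepsilon_{i-1}\mathcal{G}$ by the well-definedness of $\mathcal{G}$. For $\mathcal{S}_i\mathcal{T}_i = \varepsilon_{i-1}\mathcal{S}_i\mathcal{S}_{i-1}\mathcal{S}_{i+1}$, again this is a cyclic reordering giving $\varepsilon_{i-1}\mathcal{G}$. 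This simultaneously shows $\mathcal{T}_i$ and $\mathcal{S}_i$ commute. Item (ii) follows by composing (i) with $\mathcal{S}_i$: from $\mathcal{S}_i\mathcal{T}_i = \varepsilon_{i-1}\mathcal{G}$ multiply on the left by $\mathcal{S}_i$ and use ${\mathcal{S}_i}^2 = \varepsilon_i\,{\rm id}_E$ to get $\varepsilon_i\mathcal{T}_i = \varepsilon_{i-1}\mathcal{S}_i\mathcal{G}$, i.e. $\mathcal{S}_i\mathcal{G} = \varepsilon_{i-1}\varepsilon_i\mathcal{T}_i$; the other order is handled the same way, and the pairwise commutation of $\mathcal{G}$ with $\mathcal{S}_i$ drops out. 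Item (iii) is obtained identically by composing (ii) with $\mathcal{S}_i$, or by composing (i) with $\mathcal{T}_i$ and invoking Proposition~\ref{propriedades_Ti}(ii).

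For item (iv), take $j = i-1$: then $\mathcal{S}_{i-1}\mathcal{T}_i = \varepsilon_{i-1}\mathcal{S}_{i-1}\mathcal{S}_{i-1}\mathcal{S}_{i+1} = \varepsilon_{i-1}\cdot\varepsilon_{i-1}\mathcal{S}_{i+1} = \mathcal{S}_{i+1}$, using $\varepsilon_{i-1}^2 = 1$; for $j = i+1$, $\mathcal{S}_{i+1}\mathcal{T}_i = \varepsilon_{i-1}\mathcal{S}_{i+1}\mathcal{S}_{i-1}\mathcal{S}_{i+1} = \varepsilon_{i-1}(\varepsilon_1\varepsilon_2\varepsilon_3)\mathcal{S}_{i-1}{\mathcal{S}_{i+1}}^2 = \varepsilon_{i-1}\varepsilon_1\varepsilon_2\varepsilon_3\varepsilon_{i+1}\mathcal{S}_{i-1} = \varepsilon_i\mathcal{S}_{i-1}$, since $\varepsilon_{i-1}\varepsilon_{i+1}\varepsilon_1\varepsilon_2\varepsilon_3 = \varepsilon_{i-1}\varepsilon_i\varepsilon_{i+1}\cdot\varepsilon_i = \varepsilon_i$. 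The relation $\mathcal{T}_i\mathcal{S}_j = \varepsilon_1\varepsilon_2\varepsilon_3\,\mathcal{S}_j\mathcal{T}_i$ for $i\neq j$ can be read off by expanding $\mathcal{T}_i\mathcal{S}_j$ the same way and comparing, or more slickly by noting that all three $\mathcal{S}$'s occurring in $\mathcal{T}_i\mathcal{S}_j$ versus $\mathcal{S}_j\mathcal{T}_i$ differ by exactly one transposition of distinct factors.

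I do not expect a genuine obstacle here; the proof is entirely mechanical. The only point requiring a modicum of care is bookkeeping with the $\mathbb{Z}_3$-indices — one must consistently remember that $\{i-1, i, i+1\} = \{1,2,3\}$ so that any monomial of length three in the three $\mathcal{S}$'s is a reordering of $\mathcal{G}$ — and tracking the accumulated powers of $\varepsilon_1\varepsilon_2\varepsilon_3$ and of the individual $\varepsilon_k$, repeatedly using $\varepsilon_k^2 = 1$ and $\varepsilon_{i-1}\varepsilon_i\varepsilon_{i+1} = \varepsilon_1\varepsilon_2\varepsilon_3$. So the writeup will simply be a sequence of short displayed computations, one per item, with the index arithmetic spelled out.
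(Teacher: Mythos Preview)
Your proposal is correct and follows essentially the same route as the paper's proof: both arguments reduce each identity to a short string of $\mathcal{S}_k$'s and simplify using $\mathcal{S}_k^2=\varepsilon_k\,{\rm id}_E$, the $\varepsilon_1\varepsilon_2\varepsilon_3$-commutation rule, and the cyclic invariance defining $\mathcal{G}$. The only cosmetic difference is that the paper establishes $\mathcal{T}_i\mathcal{S}_i=\mathcal{S}_i\mathcal{T}_i$ in item~(i) by explicitly moving $\mathcal{S}_i$ past $\mathcal{S}_{i-1}\mathcal{S}_{i+1}$ with two swaps, whereas you invoke the well-definedness of $\mathcal{G}$ for both orderings directly; the remaining items are handled identically by composing~(i) with $\mathcal{S}_i$ or $\mathcal{T}_i$ and by the same direct expansions for~(iv).
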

\begin{proof}
\begin{enumerate}
    \item Using (\ref{def_transition}) and the condition ii) of Definition \ref{def_epsilonHS_pre-Courant} twice, we get
    $$\mathcal{T}_i\mathcal{S}_i=\varepsilon_{i-1} \mathcal{S}_{i-1}\mathcal{S}_{i+1}\mathcal{S}_i =
     \varepsilon_{i-1} \mathcal{S}_i\mathcal{S}_{i-1}\mathcal{S}_{i+1} =\mathcal{S}_i\mathcal{T}_i.$$
    On the other hand, from (\ref{def_transition}) and (\ref{def_mathcalG}) we have
    $$\mathcal{T}_i\mathcal{S}_i=\varepsilon_{i-1} \mathcal{S}_{i-1}\mathcal{S}_{i+1}\mathcal{S}_i =\varepsilon_{i-1}\mathcal{G}.$$
    \item From item i) we have $\mathcal{T}_i\mathcal{S}_i=\varepsilon_{i-1}\mathcal{G}$ and composing with $\mathcal{S}_i$, on the right, we get  $\mathcal{T}_i(\mathcal{S}_i)^2=\varepsilon_{i-1}\mathcal{G}\mathcal{S}_i$ or, equivalently, $\varepsilon_{i-1}\varepsilon_{i}\mathcal{T}_i=\mathcal{G}\mathcal{S}_i$. For the other equality, we start with $\mathcal{S}_i\mathcal{T}_i=\varepsilon_{i-1}\mathcal{G}$ and compose with $\mathcal{S}_i$, on the left, to obtain $(\mathcal{S}_i)^2\mathcal{T}_i=\varepsilon_{i-1}\mathcal{S}_i\mathcal{G}$; so that $\varepsilon_{i-1}\varepsilon_{i}\mathcal{T}_i=\mathcal{S}_i\mathcal{G}$.
    \item Analogous to the proof of item ii), but composing with $\mathcal{T}_i$ instead of $\mathcal{S}_i$.
    \item Let us prove the case $j=i-1$. Using (\ref{def_transition}) and the condition i) of Definition \ref{def_epsilonHS_pre-Courant}, we have
    $$\mathcal{S}_{i-1} \mathcal{T}_i=\varepsilon_{i-1}\mathcal{S}_{i-1}^2\mathcal{S}_{i+1}=\mathcal{S}_{i+1}.$$
    Moreover, by (\ref{def_transition}) and conditions i) and ii) of Definition \ref{def_epsilonHS_pre-Courant}, we get
    $$\mathcal{T}_i\mathcal{S}_{i-1} =\varepsilon_{i-1}\mathcal{S}_{i-1}\mathcal{S}_{i+1}\mathcal{S}_{i-1}=\varepsilon_{i}\varepsilon_{i+1}\mathcal{S}_{i-1}^2\mathcal{S}_{i+1}=\varepsilon_1\varepsilon_2\varepsilon_3\mathcal{S}_{i+1},$$
    which completes the proof of the statement. The case $j=i+1$ is analogous.
\end{enumerate}
\end{proof}

Notice that when $\mathcal{G}={\rm id}_E$, then $\mathcal{S}_i=\varepsilon_{i-1}\varepsilon_{i}\mathcal{T}_i$. In this case, if\,\footnote{Notice that, because of Proposition \ref{propriedades_G} i), the assumption $\mathcal{G}={\rm id}_E$ implies $\varepsilon_1\varepsilon_2\varepsilon_3=-1$.} $\varepsilon_1=\varepsilon_2=\varepsilon_3=-1$ and $\Theta$ is a Courant algebroid structure on $E$, the triplet $(\mathcal{S}_{1}, \mathcal{S}_{2}, \mathcal{S}_{3})$ is an \emph{hypercomplex structure} on $(E, \Theta)$, in the sense of \cite{Stienon}.

\

The relations between $\mathcal{S}_i, \mathcal{T}_j$ and $\mathcal{G}$, for all $i,j=1,2,3$, may be visualized in Figure \ref{Fig. 2}.

\begin{figure}[H]
\begin{center}
\psscalebox{0.7}{
\begin{pspicture*}(-2.75,-1.16)(17,11.26)
\psline[linewidth=1.6pt](3,5.2)(6,0)
\psline[linewidth=1.6pt](0,0)(6,0)
\psline[linewidth=1.6pt](0,0)(3,5.2)
\psline(0,0)(3,1.73)
\psline(3,5.2)(3,1.73)
\psline(6,0)(3,1.73)
\psline(9,5.2)(6,3.46)
\psline[linewidth=1.6pt](9,5.2)(3,5.2)
\psline(3,5.2)(6,3.46)
\psline(6,0)(6,3.46)
\psline[linewidth=1.6pt](9,5.2)(6,0)
\psline(3,5.2)(6,6.93)
\psline(9,5.2)(6,6.93)
\psline[linewidth=1.6pt](9,5.2)(6,10.39)
\psline(6,10.39)(6,6.93)
\psline[linewidth=1.6pt](3,5.2)(6,10.39)
\psline[linewidth=1.6pt](9,5.2)(12,0)
\psline(12,0)(9,1.73)
\psline[linewidth=1.6pt](12,0)(6,0)
\psline(6,0)(9,1.73)
\psline(9,5.2)(9,1.73)
\begin{huge}
\rput[tl](5.66,5.4){$\boldsymbol >$}
\rput[tl]{60}(7.37,2.8){$\boldsymbol <$}
\rput[tl]{300}(4.46,3.1){$\boldsymbol <$}
\rput[tl](2.68,0.2){$\boldsymbol <$}
\rput[tl](8.68,0.2){$\boldsymbol >$}
\rput[tl]{60}(1.245,2.58){$\boldsymbol <$}
\rput[tl]{60}(4.15,7.61){$\boldsymbol >$}
\rput[tl]{300}(10.45,3.08){$\boldsymbol >$}
\rput[tl]{300}(7.5,8.2){$\boldsymbol <$}
\end{huge}
\begin{large}
\rput[tl](5.7,4.9){$\mathcal{T}_1 $}
\rput[tl]{60}(7,3){$\mathcal{T}_2 $}
\rput[tl]{300}(5.0,3.2){$\mathcal{T}_3 $}
\rput[tl](8.83,-0.25){$\mathcal{S}_1$}
\rput[tl](2.72,-0.25){$\mathcal{S}_1$}
\rput[tl]{60}(0.9,2.9){$\mathcal{S}_2$}
\rput[tl]{60}(3.9,7.95){$\mathcal{S}_2$}
\rput[tl]{300}(10.9,3.1){$\mathcal{S}_3$}
\rput[tl]{300}(8,8.3){$\mathcal{S}_3$}
\end{large}
\rput[tl]{330}(4.68,4.36){$\boldsymbol <$}
\rput[tl]{270}(6.12,2.2){$\boldsymbol >$}
\rput[tl]{30}(7.12,4.25){$\boldsymbol >$}
\rput[tl]{30}(1.65,1.08){$\boldsymbol <$}
\rput[tl]{90}(2.88,2.9){$\boldsymbol >$}
\rput[tl]{330}(4.15,1.2){$\boldsymbol >$}
\rput[tl]{30}(4.65,6.28){$\boldsymbol <$}
\rput[tl]{90}(5.88,8.1){$\boldsymbol >$}
\rput[tl]{330}(7.15,6.4){$\boldsymbol >$}
\rput[tl]{30}(7.65,1.08){$\boldsymbol <$}
\rput[tl]{90}(8.88,2.9){$\boldsymbol >$}
\rput[tl]{330}(10.15,1.2){$\boldsymbol >$}
\begin{small}
\rput[tl]{330}(4.32,4.14){$\varepsilon_1 \mathcal{T}_2 $}
\rput[tl]{270}(6.45,2.4){$\varepsilon_3 \mathcal{T}_1 $}
\rput[tl]{30}(6.9,4.55){$\varepsilon_2 \mathcal{T}_3 $}
\rput[tl]{30}(1.22,1.35){$\varepsilon_3\varepsilon_2 \mathcal{T}_3$}
\rput[tl]{90}(3.25,2.8){$\varepsilon_3 \mathcal{S}_1$}
\rput[tl]{330}(3.55,1.1){$\varepsilon_2\varepsilon_1 \mathcal{S}_2$}
\rput[tl]{30}(4.22,6.55){$\varepsilon_3\varepsilon_2 \mathcal{S}_3$}
\rput[tl]{90}(6.25,8){$\varepsilon_1\varepsilon_3 \mathcal{T}_1$}
\rput[tl]{330}(6.55,6.3){$\varepsilon_1 \mathcal{S}_2$}
\rput[tl]{30}(7.22,1.35){$\varepsilon_2 \mathcal{S}_3$}
\rput[tl]{90}(9.25,2.8){$\varepsilon_1\varepsilon_3 \mathcal{S}_1$}
\rput[tl]{330}(9.55,1.1){$\varepsilon_2\varepsilon_1 \mathcal{T}_2$}
\end{small}
\begin{large}
\rput(-0.5,-0.25){$\boldsymbol D$}
\rput(6,10.85){$\boldsymbol D$}
\rput(12.4,-0.25){$\boldsymbol D$}
\rput(6,-0.5){$\boldsymbol A$}
\rput(2.5,5.3){$\boldsymbol B$}
\rput(9.4,5.3){$\boldsymbol C$}
\end{large}
\end{pspicture*}
}
\end{center}
\caption{}\label{Fig. 2}
\end{figure}

This is to be understood as the pattern for a tetrahedron $ABCD$. The metric $\mathcal{G}$ does not appear in Figure \ref{Fig. 2} but in Figure \ref{Fig. 3}, after building the tetrahedron, $\mathcal{G}$ appears as the altitude of the tetrahedron $ABCD$.

\begin{figure}[H]
\psscalebox{0.8}{
\begin{pspicture*}(-2,-3.5)(7,3.5)
\psline[linewidth=1.6pt,linestyle=dashed,dash=5pt 5pt](-1.12,-0.84)(4.08,-0.06)
\psline[linestyle=dashed,dash=5pt 5pt](1.82,-1.26)(-1.12,-0.84)
\psline[linestyle=dashed,dash=5pt 5pt](1.82,-1.26)(2.5,-2.88)
\psline[linestyle=dashed,dash=5pt 5pt](1.82,-1.26)(4.08,-0.06)
\psline[linewidth=1.6pt,linestyle=dashed,dash=5pt 5pt,linecolor=red](1.82,2.8)(1.82,-1.26)
\psline[linewidth=1.6pt](1.82,2.8)(-1.12,-0.84)
\psline[linewidth=1.6pt](-1.12,-0.84)(2.5,-2.88)
\psline[linewidth=1.6pt](2.5,-2.88)(1.82,2.8)
\psline[linewidth=1.6pt](1.82,2.8)(4.08,-0.06)
\psline[linewidth=1.6pt](4.08,-0.06)(2.5,-2.88)
\psline[linewidth=1.6pt](2.5,-2.88)(1.82,2.8)
\rput[tl](1.42,0.8){$\red\large{ \mathcal{G} }$}
\rput(1.8,3.1){$\boldsymbol D$}
\rput(2.5,-3.2){$\boldsymbol A$}
\rput(-1.5,-0.8){$\boldsymbol B$}
\rput(4.4,0){$\boldsymbol C$}
\end{pspicture*}
}
\caption{}\label{Fig. 3}
\end{figure}

When $\mathcal{G}={\rm id}_E$, we have $\varepsilon_{i}\mathcal{S}_i=\varepsilon_{i-1}\mathcal{T}_i$ and there is an identification between upper edges of the tetrahedron $ABCD$ and their projections onto the face $ABC$ (see Figure \ref{Fig. 3}). In other words, in this case the tetrahedron degenerates into a (flat) triangle.

\

The next proposition shows the behaviour of $\mathcal{G}$ and $\mathcal{T}_i$, $i=1,2,3$, under the bilinear form $\langle .,. \rangle$.

\begin{prop}\label{epsilon_hermiticity}
    Let $(\mathcal{S}_1, \mathcal{S}_2, \mathcal{S}_3)$ be an $\boldsymbol{\varepsilon}$-hypersymplectic structure on a pre-Courant algebroid $(E, \langle .,. \rangle, \Theta)$. The maps $\mathcal{G}$ and $\mathcal{T}_i$, $i=1,2,3$, satisfy
$$\langle \mathcal{G}\mathcal{T}_i (X),\mathcal{T}_i (Y)\rangle=\varepsilon_{i-1}\varepsilon_{i+1}\langle \mathcal{G}(X), Y\rangle,$$
for all $X,Y \in \Gamma(E)$.
\end{prop}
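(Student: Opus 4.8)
The plan is to reduce the claimed identity to the relations among $\mathcal{S}_i$, $\mathcal{T}_i$ and $\mathcal{G}$ already established in Proposition~\ref{propriedades_Ti_Si_G}, together with the skew-symmetry of the $\mathcal{S}_i$ and the definition of the transpose. The starting point is to rewrite $\mathcal{G}\mathcal{T}_i$ using Proposition~\ref{propriedades_Ti_Si_G}~iii): $\mathcal{G}\mathcal{T}_i=\varepsilon_{i-1}\varepsilon_i\,\mathcal{S}_i$. Then the left-hand side becomes $\varepsilon_{i-1}\varepsilon_i\,\langle \mathcal{S}_i(X),\mathcal{T}_i(Y)\rangle$, and by definition of the transpose this equals $\varepsilon_{i-1}\varepsilon_i\,\langle X,\mathcal{S}_i^{*}\mathcal{T}_i(Y)\rangle=-\varepsilon_{i-1}\varepsilon_i\,\langle X,\mathcal{S}_i\mathcal{T}_i(Y)\rangle$ since $\mathcal{S}_i$ is skew-symmetric.

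Next I would identify $\mathcal{S}_i\mathcal{T}_i$ via Proposition~\ref{propriedades_Ti_Si_G}~i), namely $\mathcal{S}_i\mathcal{T}_i=\varepsilon_{i-1}\mathcal{G}$. Substituting this in, the left-hand side is $-\varepsilon_{i-1}\varepsilon_i\,\langle X,\varepsilon_{i-1}\mathcal{G}(Y)\rangle=-\varepsilon_i\,\langle X,\mathcal{G}(Y)\rangle=-\varepsilon_i\,\langle \mathcal{G}^{*}(X),Y\rangle$. Using Proposition~\ref{propriedades_G}~i), $\mathcal{G}^{*}=-\varepsilon_1\varepsilon_2\varepsilon_3\,\mathcal{G}$, this turns into $\varepsilon_i\varepsilon_1\varepsilon_2\varepsilon_3\,\langle \mathcal{G}(X),Y\rangle$. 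Finally, since $\varepsilon_i^2=1$, we have $\varepsilon_i\varepsilon_1\varepsilon_2\varepsilon_3=\varepsilon_{i-1}\varepsilon_{i+1}$ (the product of the other two signs), which is exactly the claimed coefficient, so $\langle \mathcal{G}\mathcal{T}_i(X),\mathcal{T}_i(Y)\rangle=\varepsilon_{i-1}\varepsilon_{i+1}\,\langle \mathcal{G}(X),Y\rangle$.

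There is no real obstacle here: the proof is a short chain of substitutions, and the only thing to be careful about is bookkeeping of the sign parameters $\varepsilon_i\in\{\pm1\}$ in $\Zz_3$, in particular the identity $\varepsilon_i\varepsilon_1\varepsilon_2\varepsilon_3=\varepsilon_{i-1}\varepsilon_{i+1}$. An alternative, essentially equivalent route would be to use Proposition~\ref{propriedades_Ti}~i), which gives $\mathcal{T}_i^{*}=\varepsilon_1\varepsilon_2\varepsilon_3\,\mathcal{T}_i$, to move both copies of $\mathcal{T}_i$ across the bilinear form at once: $\langle\mathcal{G}\mathcal{T}_i(X),\mathcal{T}_i(Y)\rangle=\langle X,\mathcal{T}_i^{*}\mathcal{G}^{*}\mathcal{T}_i(Y)\rangle$, and then simplify $\mathcal{T}_i^{*}\mathcal{G}^{*}\mathcal{T}_i$ using $\mathcal{G}\mathcal{T}_i=\mathcal{T}_i\mathcal{G}$ and $\mathcal{T}_i^2=\varepsilon_i\,{\rm id}_E$. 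I would present whichever of the two is shorter; I expect the first to be cleanest. Either way the whole argument is a handful of lines invoking only Propositions~\ref{propriedades_G} and~\ref{propriedades_Ti_Si_G}.
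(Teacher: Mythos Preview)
Your proof is correct. In fact, the ``alternative route'' you sketch at the end is exactly the paper's argument: it moves $\mathcal{T}_i$ across the pairing via $\mathcal{T}_i^{*}=\varepsilon_1\varepsilon_2\varepsilon_3\,\mathcal{T}_i$ (Proposition~\ref{propriedades_Ti}~i)), then uses $\mathcal{T}_i\mathcal{G}=\mathcal{G}\mathcal{T}_i$ (Proposition~\ref{propriedades_Ti_Si_G}~iii)) and $\mathcal{T}_i^2=\varepsilon_i\,{\rm id}_E$ (Proposition~\ref{propriedades_Ti}~ii)) to collapse everything in a single displayed line. Your primary route via $\mathcal{G}\mathcal{T}_i=\varepsilon_{i-1}\varepsilon_i\,\mathcal{S}_i$ and $\mathcal{G}^{*}=-\varepsilon_1\varepsilon_2\varepsilon_3\,\mathcal{G}$ is a harmless detour through $\mathcal{S}_i$ that costs one or two extra substitutions; the paper's version is slightly shorter because it never leaves the $(\mathcal{G},\mathcal{T}_i)$ world and avoids invoking $\mathcal{G}^{*}$ separately.
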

\begin{proof}
Using Proposition \ref{propriedades_Ti} i) and ii) and Proposition \ref{propriedades_Ti_Si_G} iii) we have:
$$\langle \mathcal{G}\mathcal{T}_i (X),\mathcal{T}_i (Y)\rangle=\varepsilon_1\varepsilon_2\varepsilon_3\langle \mathcal{T}_i\mathcal{G}\mathcal{T}_i (X),Y\rangle=\varepsilon_1\varepsilon_2\varepsilon_3\langle \mathcal{G}\mathcal{T}_i^2 (X),Y\rangle=\varepsilon_{i-1}\varepsilon_{i+1}\langle \mathcal{G}(X), Y\rangle.$$

\end{proof}

\section{Hypersymplectic structures on deformed pre-Courant algebroids}
\label{section_4}

The results of Section \ref{section_2} (Propositions \ref{propriedades_Ti}, \ref{propriedades_G} and \ref{propriedades_Ti_Si_G}) show that the value of the parameter $\varepsilon_1\varepsilon_2\varepsilon_3=\pm 1$ is determinant for some basic properties of the morphisms $\mathcal{T}_i$, $\mathcal{S}_j$ and $\mathcal{G}$, $i,j=1,2,3$, and for the relations between them. In this section we consider an $\boldsymbol{\varepsilon}$-hypersymplectic structure $(\mathcal{S}_1, \mathcal{S}_2, \mathcal{S}_3)$  on a pre-Courant algebroid $(E, \Theta)$ such that $\varepsilon_1 \varepsilon_2 \varepsilon_3=-1$. We prove that the  $\mathcal{T}_i$'s are Nijenhuis morphisms and we show that we may deform the pre-Courant structure $\Theta$ by $\mathcal{S}_i$ or by $\mathcal{T}_i$, without loosing the property of $(\mathcal{S}_1, \mathcal{S}_2, \mathcal{S}_3)$ being hypersymplectic.

\begin{defn}
    Let $(\mathcal{S}_1, \mathcal{S}_2, \mathcal{S}_3)$ be an {\large$\boldsymbol{\varepsilon}$}-hypersymplectic structure on a pre-Courant algebroid $(E, \Theta)$, such that $\varepsilon_1\varepsilon_2\varepsilon_3=-1$.
    \begin{itemize}
        \item If $\varepsilon_1=\varepsilon_2=\varepsilon_3=-1$, then $(\mathcal{S}_1, \mathcal{S}_2, \mathcal{S}_3)$ is said to be a \emph{hypersymplectic} structure on $(E, \Theta)$.
        \item Otherwise, $(\mathcal{S}_1, \mathcal{S}_2, \mathcal{S}_3)$ is said to be a \emph{para-hypersymplectic} structure on $(E, \Theta)$.
    \end{itemize}
\end{defn}

Note that all para-hypersymplectic structures satisfy, eventually after a cyclic permutation of the indices, $\varepsilon_1=\varepsilon_2=1$ and $\varepsilon_3=-1$. In the sequel, every para-hypersymplectic structure will be considered in such form.

As a direct consequence of Propositions \ref{Si_Nijenhuis}, \ref{torsion_composition} and \ref{propriedades_Ti} ii), we get the following:

\begin{thm}  \label{T_i_Nijenhuis}
    Let $(\mathcal{S}_1, \mathcal{S}_2, \mathcal{S}_3)$ be a (para-)hypersymplectic structure on a pre-Courant algebroid $(E, \Theta)$.
    Then, for each  $i=1,2,3$, the transition morphism $\mathcal{T}_i$ is a Nijenhuis morphism. Moreover,
    \begin{enumerate}
    \item if $\varepsilon_i=-1$, $\mathcal{T}_i$ is a complex structure;
    \item if $\varepsilon_i=1$, $\mathcal{T}_i$ is a para-complex structure.
    \end{enumerate}
\end{thm}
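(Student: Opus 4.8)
The plan is to obtain Theorem~\ref{T_i_Nijenhuis} as an immediate application of the machinery already assembled in the excerpt, with essentially no new computation. The key observation is that for a (para-)hypersymplectic structure one has $\varepsilon_1\varepsilon_2\varepsilon_3=-1$, so condition ii) of Definition~\ref{def_epsilonHS_pre-Courant} reads $\mathcal{S}_i\mathcal{S}_j=-\mathcal{S}_j\mathcal{S}_i$ for $i\neq j$; that is, the three endomorphisms \emph{pairwise anti-commute}. Since $\mathcal{T}_i=\varepsilon_{i-1}\mathcal{S}_{i-1}\mathcal{S}_{i+1}$ by (\ref{def_transition}), the transition morphism $\mathcal{T}_i$ is (up to the scalar $\varepsilon_{i-1}=\pm1$) the composition of two anti-commuting endomorphisms.

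First I would invoke Proposition~\ref{Si_Nijenhuis}: each $\mathcal{S}_i$ is a Nijenhuis morphism, i.e. ${\text{\Fontlukas T}}_{\Theta}\mathcal{S}_i=0$. Next, since $\mathcal{S}_{i-1}$ and $\mathcal{S}_{i+1}$ anti-commute and both have vanishing Nijenhuis torsion, Proposition~\ref{torsion_composition} (the "in particular" clause) applies directly to the pair $(\mathcal{S}_{i-1},\mathcal{S}_{i+1})$ and gives ${\text{\Fontlukas T}}_{\Theta}(\mathcal{S}_{i-1}\circ\mathcal{S}_{i+1})=0$. Finally, multiplying a Nijenhuis endomorphism by a constant $\lambda\in\Rr$ scales its Nijenhuis torsion by $\lambda^2$ (this is immediate from the defining formula (\ref{def_Nijenhuis_torsion}), which is quadratic in the endomorphism), so ${\text{\Fontlukas T}}_{\Theta}\mathcal{T}_i = \varepsilon_{i-1}^2\,{\text{\Fontlukas T}}_{\Theta}(\mathcal{S}_{i-1}\mathcal{S}_{i+1})=0$. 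Hence each $\mathcal{T}_i$ is Nijenhuis. For the refinement, Proposition~\ref{propriedades_Ti} ii) already records ${\mathcal{T}_i}^2=\varepsilon_i\,{\rm id}_E$; combining this with ${\text{\Fontlukas T}}_{\Theta}\mathcal{T}_i=0$ and the terminology recalled after (\ref{supergeometric_torsion}), $\mathcal{T}_i$ is a complex structure when $\varepsilon_i=-1$ and a para-complex structure when $\varepsilon_i=1$.

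There is essentially no obstacle here — this is exactly the kind of corollary that the preceding three propositions were set up to yield, and the theorem statement itself announces it as "a direct consequence of Propositions \ref{Si_Nijenhuis}, \ref{torsion_composition} and \ref{propriedades_Ti} ii)". The only point requiring a word of care is the scalar factor $\varepsilon_{i-1}$ in the definition of $\mathcal{T}_i$: one must note that Proposition~\ref{torsion_composition} is stated for the bare composition $\mathcal{I}\circ\mathcal{J}$, so one should remark (or simply observe from the bilinearity structure of ${\text{\Fontlukas T}}_\Theta$ in each slot under the relevant deformations) that rescaling by $\varepsilon_{i-1}=\pm1$ does not affect the vanishing of the torsion. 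Everything else is a direct citation, and I would keep the proof to two or three sentences.
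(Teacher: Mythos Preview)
Your proposal is correct and follows exactly the route the paper takes: the paper gives no written proof beyond the sentence ``As a direct consequence of Propositions~\ref{Si_Nijenhuis}, \ref{torsion_composition} and \ref{propriedades_Ti}~ii)'', and your argument unpacks precisely those three citations in the intended way. The only addition you make---the remark that the scalar $\varepsilon_{i-1}=\pm1$ is harmless because the torsion is quadratic in the endomorphism---is a reasonable clarification that the paper leaves implicit.
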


In the case where $\varepsilon_1=\varepsilon_2=\varepsilon_3=-1$ and $(E, \Theta)$ is a Courant algebroid, the triplet $(\mathcal{T}_1, \mathcal{T}_2, \mathcal{T}_3)$ is a \emph{hypercomplex structure} on $(E, \Theta)$ in the sense of \cite{Stienon}, see Remark~\ref{stienon}.

\

In \cite{ALC11} the notion of \emph{Nijenhuis pair} on a pre-Courant algebroid $(E,\Theta)$ was introduced as a pair $(\mathcal{I},\mathcal{J})$  of anti-commuting Nijenhuis morphisms such that $C_\Theta(\mathcal{I},\mathcal{J})=0$.

     Proposition \ref{C_theta(I,J)=0} ii) shows that when two Nijenhuis morphisms $\mathcal{I}$ and $\mathcal{J}$ are (para-)complex structures, i.e., $\lambda_{\mathcal{I}}=\pm 1$ and $\lambda_{\mathcal{J}}=\pm 1$, it is enough that they anti-commute to form a Nijenhuis pair. This is the case when we have a pre-Courant algebroid equipped with a (para-)hypersymplectic structure, as it is stated in the next proposition.

\begin{prop} \label{prop_Nijenhuis_pairs}
   Let $(\mathcal{S}_1, \mathcal{S}_2, \mathcal{S}_3)$ be a (para-)hypersymplectic structure on a pre-Courant algebroid $(E, \Theta)$. Then, $(\mathcal{S}_i,\mathcal{S}_j), (\mathcal{T}_i,\mathcal{T}_j)$ and $(\mathcal{S}_i,\mathcal{T}_j)$ are Nijenhuis pairs, for all $i, j\in\{1,2,3\}, i\neq j$.
\end{prop}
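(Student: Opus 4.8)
The plan is to verify, for each of the three kinds of pairs, that the two hypotheses needed to invoke Proposition \ref{C_theta(I,J)=0} ii) are satisfied: namely that the two endomorphisms in the pair anti-commute and that each one squares to a nonzero real multiple of the identity. Once these are in place, Proposition \ref{C_theta(I,J)=0} ii) gives $C_\Theta=0$, and combined with the fact that both endomorphisms are Nijenhuis (which we already know from Proposition \ref{Si_Nijenhuis} for the $\mathcal{S}_i$ and Theorem \ref{T_i_Nijenhuis} for the $\mathcal{T}_i$), this is exactly the definition of a Nijenhuis pair recalled just above the statement.

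The verification of the square condition is immediate in all three cases: condition i) of Definition \ref{def_epsilonHS_pre-Courant} gives $\mathcal{S}_i^2=\varepsilon_i\,{\rm id}_E$ with $\varepsilon_i=\pm1\neq0$, and Proposition \ref{propriedades_Ti} ii) gives $\mathcal{T}_i^2=\varepsilon_i\,{\rm id}_E$. The Nijenhuis property is likewise already established: Proposition \ref{Si_Nijenhuis} says each $\mathcal{S}_i$ is Nijenhuis, and Theorem \ref{T_i_Nijenhuis} says each $\mathcal{T}_i$ is Nijenhuis for a (para-)hypersymplectic structure (which is the hypothesis here, so $\varepsilon_1\varepsilon_2\varepsilon_3=-1$). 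So the only genuine work is checking anti-commutativity of each type of pair. For $(\mathcal{S}_i,\mathcal{S}_j)$ with $i\neq j$, condition ii) of Definition \ref{def_epsilonHS_pre-Courant} reads $\mathcal{S}_i\mathcal{S}_j=\varepsilon_1\varepsilon_2\varepsilon_3\,\mathcal{S}_j\mathcal{S}_i$, and since $\varepsilon_1\varepsilon_2\varepsilon_3=-1$ this is precisely anti-commutation. For $(\mathcal{T}_i,\mathcal{T}_j)$ with $i\neq j$, Proposition \ref{propriedades_Ti} iii) gives $\mathcal{T}_{i-1}\mathcal{T}_{i+1}=\varepsilon_1\varepsilon_2\varepsilon_3\,\mathcal{T}_{i+1}\mathcal{T}_{i-1}$, which again is anti-commutation under $\varepsilon_1\varepsilon_2\varepsilon_3=-1$ (every pair $\{i,j\}$ with $i\neq j$ is of the form $\{i-1,i+1\}$ in $\mathbb{Z}_3$). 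For the mixed pair $(\mathcal{S}_i,\mathcal{T}_j)$ with $i\neq j$, I would use Proposition \ref{propriedades_Ti_Si_G} iv), which states $\mathcal{S}_j\mathcal{T}_i=\varepsilon_1\varepsilon_2\varepsilon_3\,\mathcal{T}_i\mathcal{S}_j$ for $i\neq j$; relabelling indices and using $\varepsilon_1\varepsilon_2\varepsilon_3=-1$ gives the anti-commutation of $\mathcal{S}_i$ and $\mathcal{T}_j$.

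The one subtlety, which I expect to be the main (if minor) obstacle, is the mixed pair $(\mathcal{S}_i,\mathcal{T}_i)$ with equal indices — but note the statement only asserts the Nijenhuis pair property for $i\neq j$, so this case is excluded and causes no trouble; indeed by Proposition \ref{propriedades_Ti_Si_G} i) the morphisms $\mathcal{S}_i$ and $\mathcal{T}_i$ commute rather than anti-commute, so they would not form a Nijenhuis pair in the sense used here. Thus the proof reduces to: (a) invoke anti-commutativity from Definition \ref{def_epsilonHS_pre-Courant} ii), Proposition \ref{propriedades_Ti} iii), and Proposition \ref{propriedades_Ti_Si_G} iv) respectively for the three cases, using $\varepsilon_1\varepsilon_2\varepsilon_3=-1$; (b) note the square conditions from Definition \ref{def_epsilonHS_pre-Courant} i) and Proposition \ref{propriedades_Ti} ii); (c) apply Proposition \ref{C_theta(I,J)=0} ii) to conclude $C_\Theta=0$; (d) recall the Nijenhuis property from Proposition \ref{Si_Nijenhuis} and Theorem \ref{T_i_Nijenhuis}, so that the definition of Nijenhuis pair is met in each case.
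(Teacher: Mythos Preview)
Your proposal is correct and follows essentially the same approach as the paper: the paper presents the proposition as an immediate consequence of Proposition~\ref{C_theta(I,J)=0}~ii), noting just before the statement that anti-commuting Nijenhuis (para-)complex structures automatically form a Nijenhuis pair. Your detailed verification of anti-commutation in each case, together with the square conditions and the Nijenhuis property of $\mathcal{S}_i$ and $\mathcal{T}_i$, is exactly what the paper leaves implicit.
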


Next we show that when a triplet $(\mathcal{S}_1, \mathcal{S}_2, \mathcal{S}_3)$ is a (para-)hypersymplectic structure on a pre-Courant algebroid $(E, \Theta)$, it is also a (para-)hypersymplectic structure on the pre-Courant algebroid deformed by $\mathcal{T}_i$ or by $\mathcal{S}_i$.

\begin{thm} \label{thm_deformation_Theta}
   Let $(E, \Theta)$ be a pre-Courant algebroid. The following assertions are equivalent:
   \begin{enumerate}
    \item $(\mathcal{S}_1, \mathcal{S}_2, \mathcal{S}_3)$ is a (para-)hypersymplectic structure on $(E, \Theta)$;
    \item $(\mathcal{S}_1, \mathcal{S}_2, \mathcal{S}_3)$ is a (para-)hypersymplectic structure on $(E, \Theta_{\mathcal{S}_i})$;
    \item $(\mathcal{S}_1, \mathcal{S}_2, \mathcal{S}_3)$ is a (para-)hypersymplectic structure on $(E, \Theta_{\mathcal{T}_j})$,
   \end{enumerate}
   $i, j \in \{1,2,3\}$, where $\mathcal{T}_j$ is defined by (\ref{def_transition}).
\end{thm}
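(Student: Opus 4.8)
The plan is to show the three conditions of Definition~\ref{def_epsilonHS_pre-Courant} are preserved under the deformations $\Theta \mapsto \Theta_{\mathcal{S}_i}$ and $\Theta \mapsto \Theta_{\mathcal{T}_j}$. Conditions i) and ii) involve only the endomorphisms $\mathcal{S}_k$ and the bilinear form $\langle\cdot,\cdot\rangle$, not $\Theta$, so they are automatically unchanged; only condition iii), equivalently (by Remark~\ref{rem_equiv_cond_def_epsilonHS_pre-Courant}) the vanishing of the Nijenhuis torsions ${\text{\Fontlukas T}}_{\Theta}\mathcal{S}_k$, needs attention. Thus the whole theorem reduces to proving: if ${\text{\Fontlukas T}}_{\Theta}\mathcal{S}_k=0$ for all $k$, then ${\text{\Fontlukas T}}_{\Theta_{\mathcal{S}_i}}\mathcal{S}_k=0$ for all $k$, and ${\text{\Fontlukas T}}_{\Theta_{\mathcal{T}_j}}\mathcal{S}_k=0$ for all $k$; and conversely. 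I would first record that since $\mathcal{S}_i$ and $\mathcal{T}_j$ are skew-symmetric (Proposition~\ref{propriedades_Ti} i) gives ${\mathcal{T}_j}^*=\varepsilon_1\varepsilon_2\varepsilon_3\mathcal{T}_j=-\mathcal{T}_j$ in the case at hand) and square to $\pm\,{\rm id}_E$, the deformed structures $\Theta_{\mathcal{S}_i}=\{\mathcal{S}_i,\Theta\}$ and $\Theta_{\mathcal{T}_j}=\{\mathcal{T}_j,\Theta\}$ are genuine elements of $\mathcal{F}_E^3$, so that $(E,\Theta_{\mathcal{S}_i})$ and $(E,\Theta_{\mathcal{T}_j})$ are again pre-Courant algebroids.

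The key inputs are the propositions of Section~\ref{section_3}. For the case $k=i$ (deforming by $\mathcal{S}_i$ and testing $\mathcal{S}_i$ itself), Proposition~\ref{prop_T_theta(N)=0<->T_theta_N(N)=0} applied with $\mathcal{I}=\mathcal{S}_i$ and $\lambda=\varepsilon_i$ gives directly ${\text{\Fontlukas T}}_{\Theta}\mathcal{S}_i=0 \Leftrightarrow {\text{\Fontlukas T}}_{\Theta_{\mathcal{S}_i}}\mathcal{S}_i=0$. For $k\neq i$, I would invoke Proposition~\ref{prop_T_theta(J)=0<->T_theta_I(J)=0} with $\mathcal{I}=\mathcal{S}_i$, $\mathcal{J}=\mathcal{S}_k$: these anti-commute because $\varepsilon_1\varepsilon_2\varepsilon_3=-1$ (condition ii) of Definition~\ref{def_epsilonHS_pre-Courant}), they have $\lambda_{\mathcal{S}_i}=\varepsilon_i\neq 0$, and Proposition~\ref{C_theta(I,J)=0} ii) supplies the hypothesis $C_\Theta(\mathcal{S}_i,\mathcal{S}_k)=0$; hence ${\text{\Fontlukas T}}_{\Theta}\mathcal{S}_k=0 \Leftrightarrow {\text{\Fontlukas T}}_{\Theta_{\mathcal{S}_i}}\mathcal{S}_k=0$. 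Combining these over all $k$ establishes the equivalence (i)$\Leftrightarrow$(ii). For (i)$\Leftrightarrow$(iii), I would run the identical argument with $\mathcal{T}_j$ in place of $\mathcal{S}_i$: Proposition~\ref{prop_Nijenhuis_pairs} tells us $(\mathcal{T}_j,\mathcal{S}_k)$ is a Nijenhuis pair for $j\neq k$ (so $C_\Theta(\mathcal{T}_j,\mathcal{S}_k)=0$ and they anti-commute), while ${\mathcal{T}_j}^2=\varepsilon_j\,{\rm id}_E$ with $\varepsilon_j\neq 0$; the case $k=j$ uses Proposition~\ref{propriedades_Ti_Si_G} iii) to note $\mathcal{T}_j$ commutes with $\mathcal{S}_j$, so one needs instead the commuting analogue — here I would apply Proposition~\ref{prop_T_theta(N)=0<->T_theta_N(N)=0} to $\mathcal{T}_j$ to get ${\text{\Fontlukas T}}_{\Theta}\mathcal{T}_j=0\Leftrightarrow{\text{\Fontlukas T}}_{\Theta_{\mathcal{T}_j}}\mathcal{T}_j=0$, and then translate between torsions of $\mathcal{T}_j$ and of $\mathcal{S}_j$ via Proposition~\ref{torsion_composition} together with the relations $\mathcal{S}_j=\varepsilon_{j-1}\varepsilon_j\mathcal{T}_j\mathcal{G}$ (Proposition~\ref{propriedades_Ti_Si_G} iii) and $\mathcal{G}=\mathcal{S}_{j+1}\mathcal{S}_j\mathcal{S}_{j-1}$, whose torsion vanishes by repeated use of Proposition~\ref{torsion_composition}.

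The step I expect to be the main obstacle is the diagonal case $k=j$ in part (iii): because $\mathcal{T}_j$ and $\mathcal{S}_j$ commute rather than anti-commute, Proposition~\ref{prop_T_theta(J)=0<->T_theta_I(J)=0} does not apply, and I must instead pass through $\mathcal{G}$ and the composition formula of Proposition~\ref{torsion_composition}, verifying carefully that all the auxiliary morphisms appearing there (namely $\mathcal{G}$ and the various products $\mathcal{S}_k\mathcal{S}_l$) are Nijenhuis for $\Theta_{\mathcal{T}_j}$ once they are known to be Nijenhuis for $\Theta$. An alternative, cleaner route for that case would be to first establish (i)$\Leftrightarrow$(ii) completely, then observe $\Theta_{\mathcal{T}_j}=\Theta_{\varepsilon_{j-1}\mathcal{S}_{j-1}\mathcal{S}_{j+1}}=\{\mathcal{S}_{j-1}\mathcal{S}_{j+1},\Theta\}\cdot\varepsilon_{j-1}$ can be reached by a two-step deformation and relate $\Theta_{\mathcal{T}_j}$ to $(\Theta_{\mathcal{S}_{j-1}})_{\mathcal{S}_{j+1}}$ up to the concomitant term $C_\Theta(\mathcal{S}_{j-1},\mathcal{S}_{j+1})$, which vanishes by Proposition~\ref{C_theta(I,J)=0} ii); then iterate the already-proved equivalence (ii) twice. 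I would likely present this iterated-deformation argument as the main line, since it avoids the delicate commuting-case bookkeeping entirely.
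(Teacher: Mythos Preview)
Your proposal is correct and matches the paper's proof. For $(i)\Leftrightarrow(ii)$ you argue exactly as the paper does (Proposition~\ref{prop_T_theta(N)=0<->T_theta_N(N)=0} for $k=i$, Propositions~\ref{prop_Nijenhuis_pairs} and~\ref{prop_T_theta(J)=0<->T_theta_I(J)=0} for $k\neq i$), and the ``alternative, cleaner route'' you settle on for $(i)\Leftrightarrow(iii)$ --- writing $\mathcal{T}_j=\varepsilon_{j-1}\mathcal{S}_{j-1}\mathcal{S}_{j+1}$ and iterating the already-established equivalence $(i)\Leftrightarrow(ii)$ twice --- is precisely the paper's argument; your observation that the discrepancy between $\Theta_{\mathcal{S}_{j-1}\mathcal{S}_{j+1}}$ and $(\Theta_{\mathcal{S}_{j-1}})_{\mathcal{S}_{j+1}}$ is controlled by $C_\Theta(\mathcal{S}_{j-1},\mathcal{S}_{j+1})=0$ is the content the paper leaves implicit.
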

\begin{proof}
    Let us prove $(i)\Leftrightarrow(ii)$. By Remark~\ref{rem_equiv_cond_def_epsilonHS_pre-Courant}, it is enough to show that \mbox{${\text{\Fontlukas T}}_{\Theta}\mathcal{S}_j=0 \Leftrightarrow {\text{\Fontlukas T}}_{\Theta_{\mathcal{S}_i}}\mathcal{S}_j=0$}, $i,j\in\{1,2,3\}$. We consider two cases:
    \begin{enumerate}
        \item[a)] The case $i=j$ is a direct consequence of Proposition~\ref{prop_T_theta(N)=0<->T_theta_N(N)=0};
        \item[b)] For the case $i \neq j$, we use Propositions~\ref{prop_Nijenhuis_pairs} and \ref{prop_T_theta(J)=0<->T_theta_I(J)=0}.
    \end{enumerate}
To prove the equivalence $(i)\Leftrightarrow(iii)$ it is enough to notice that $\mathcal{T}_j=\varepsilon_{j-1}\mathcal{S}_{j-1}\mathcal{S}_{j+1}$ and to use twice the equivalence $(i)\Leftrightarrow(ii)$.

\end{proof}

It is known that the deformation $\Theta_I$ of a Courant structure $\Theta$ by a Nijenhuis morphism $\mathcal{I}$ is a Courant structure \cite{grab}. So, if the pre-Courant algebroid $(E, \Theta)$ in Theorem \ref{thm_deformation_Theta} is in particular a Courant algebroid, then $(E, \Theta_{\mathcal{S}_i})$ and $(E, \Theta_{\mathcal{T}_j})$, $i, j \in \{1,2,3\}$, are also Courant algebroids.

\section{One-to-one correspondence}
\label{section_5}

In this section we keep considering an $\boldsymbol{\varepsilon}$-hypersymplectic structure $(\mathcal{S}_1, \mathcal{S}_2, \mathcal{S}_3)$  on a pre-Courant algebroid such that $\varepsilon_1 \varepsilon_2 \varepsilon_3=-1$. We define hyperk\"{a}hler structures on pre-Courant algebroids and prove a one-to-one correspondence between hypersymplectic and hyperk\"{a}hler structures.  We show how we may switch the roles of the morphisms $\mathcal{S}_i$ and $\mathcal{T}_i$ to get a set of equivalent structures; these structures are summarized in a diagram at the end of the section.

Given a pre-Courant algebroid, we may define, in a natural way, a notion of (pseudo-)metric.

\begin{defn}\label{def_pseudo-metric}
    A \emph{pseudo-metric} on a pre-Courant algebroid $(E, \langle .,. \rangle, \Theta)$ is a symmetric and orthogonal bundle automorphism $G:E \to E$. If moreover $G$ is \emph{positive definite}, that is, $\langle G(e),e\rangle > 0$, for all non vanishing sections $e \in \Gamma(E)$, then the prefix ``pseudo'' is removed and $G$ is said to be a \emph{metric} on $(E, \langle .,. \rangle, \Theta)$.
\end{defn}


\begin{rem}\label{G^2=id_E}
In Definition \ref{def_pseudo-metric}, because $G$ is symmetric, the orthogonality condition ($GG^*={\rm id}_E$) can be replaced by an almost para-complex condition ($G^2={\rm id}_E$).
\end{rem}

The next proposition follows directly from the Proposition~\ref{propriedades_G} and Remark \ref{G^2=id_E}.

\begin{prop}
        Let $(\mathcal{S}_1, \mathcal{S}_2, \mathcal{S}_3)$ be a (para-)hypersymplectic structure on a pre-Courant algebroid $(E, \langle .,. \rangle, \Theta)$. Then, the morphism $\mathcal{G}$ given by equation (\ref{def_mathcalG}) is a pseudo-metric on $E$.
\end{prop}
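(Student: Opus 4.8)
The plan is to verify directly that the morphism $\mathcal{G}$ defined in (\ref{def_mathcalG}) satisfies the two defining conditions of a pseudo-metric in Definition~\ref{def_pseudo-metric}: symmetry ($\mathcal{G}=\mathcal{G}^*$) and orthogonality ($\mathcal{G}\mathcal{G}^*={\rm id}_E$). Both of these are already essentially established, so this is a very short argument assembling previous results.

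First I would invoke Proposition~\ref{propriedades_G}. Since we are in the case $\varepsilon_1\varepsilon_2\varepsilon_3=-1$ (which holds for any (para-)hypersymplectic structure by definition), part (i) of that proposition gives $\mathcal{G}^*=-\varepsilon_1\varepsilon_2\varepsilon_3\,\mathcal{G}=\mathcal{G}$, so $\mathcal{G}$ is symmetric. Part (ii) gives $\mathcal{G}^2={\rm id}_E$, which is an almost para-complex condition. By Remark~\ref{G^2=id_E}, for a symmetric automorphism the almost para-complex condition $\mathcal{G}^2={\rm id}_E$ is equivalent to the orthogonality condition $\mathcal{G}\mathcal{G}^*={\rm id}_E$; combining $\mathcal{G}=\mathcal{G}^*$ with $\mathcal{G}^2={\rm id}_E$ yields $\mathcal{G}\mathcal{G}^*=\mathcal{G}^2={\rm id}_E$ directly. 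It also follows that $\mathcal{G}$ is a bundle automorphism, being its own inverse. Hence $\mathcal{G}$ is a symmetric orthogonal bundle automorphism of $E$, i.e.\ a pseudo-metric.

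There is essentially no obstacle here: the work has all been done in Proposition~\ref{propriedades_G} and Remark~\ref{G^2=id_E}, and the only thing to check is that the sign bookkeeping in $\mathcal{G}^*=-\varepsilon_1\varepsilon_2\varepsilon_3\mathcal{G}$ collapses to $\mathcal{G}^*=\mathcal{G}$ precisely because of the hypothesis $\varepsilon_1\varepsilon_2\varepsilon_3=-1$. The proof is thus a one- or two-line citation. If anything deserves a remark, it is that in general (without $\varepsilon_1\varepsilon_2\varepsilon_3=-1$) $\mathcal{G}$ would be skew-symmetric rather than symmetric, which is exactly why the pseudo-metric only arises in the case treated in this section; but that is a side comment rather than part of the proof.

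\begin{proof}
    Since $(\mathcal{S}_1, \mathcal{S}_2, \mathcal{S}_3)$ is a (para-)hypersymplectic structure, we have $\varepsilon_1\varepsilon_2\varepsilon_3=-1$. By Proposition~\ref{propriedades_G} i), $\mathcal{G}^*=-\varepsilon_1\varepsilon_2\varepsilon_3\,\mathcal{G}=\mathcal{G}$, so $\mathcal{G}$ is symmetric. By Proposition~\ref{propriedades_G} ii), $\mathcal{G}^2={\rm id}_E$; in particular $\mathcal{G}$ is a bundle automorphism. Combining these two facts, $\mathcal{G}\mathcal{G}^*=\mathcal{G}^2={\rm id}_E$, so $\mathcal{G}$ is orthogonal (see also Remark~\ref{G^2=id_E}). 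Therefore $\mathcal{G}$ is a symmetric orthogonal bundle automorphism of $E$, that is, a pseudo-metric on $(E, \langle .,. \rangle, \Theta)$.
\end{proof}
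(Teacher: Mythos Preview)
Your proof is correct and follows exactly the approach indicated in the paper, which simply states that the proposition follows directly from Proposition~\ref{propriedades_G} and Remark~\ref{G^2=id_E}. You have spelled out precisely the one-line argument the authors had in mind: $\varepsilon_1\varepsilon_2\varepsilon_3=-1$ turns $\mathcal{G}^*=-\varepsilon_1\varepsilon_2\varepsilon_3\mathcal{G}$ into symmetry, and $\mathcal{G}^2={\rm id}_E$ gives orthogonality.
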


Next, we define the notions of hermitian and para-hermitian pair on a pre-Courant algebroid.

\begin{defn}\label{def_ParaHermitian_preCourant}
    A \emph{hermitian} (resp., \emph{para-hermitian}) \emph{pair}\footnote{Rigourously, we should say \emph{pseudo-hermitian} and \emph{para-pseudo-hermitian} but, in order to simplify the terminology, we omit the prefix ``pseudo''.}  on a pre-Courant algebroid $(E, \langle .,. \rangle, \Theta)$ is a pair $(\mathcal{J},G)$ where $\mathcal{J}$ is a complex (resp., para-complex) structure and $G$ is a pseudo-metric such that, for all $X, Y \in \Gamma(E)$,
    \begin{equation}\label{para-hermiticity_condition}
    \langle G(\mathcal{J}X),\mathcal{J}Y\rangle =\langle G(X), Y\rangle, \quad\quad\quad \left(\text{resp., } \langle G(\mathcal{J}X),\mathcal{J}Y\rangle =-\langle G(X), Y\rangle\right).
    \end{equation}
\end{defn}

\begin{rem}\label{GJ=JG}
    In Definition \ref{def_ParaHermitian_preCourant}, if $\mathcal{J}$ is a skew-symmetric complex (resp. para-complex) structure then
    condition (\ref{para-hermiticity_condition}) is equivalent to $G\mathcal{J}=\mathcal{J}G$.
\end{rem}


As a direct consequence of Proposition \ref{epsilon_hermiticity}, we have the following:

\begin{prop}
 Let $(\mathcal{S}_1, \mathcal{S}_2, \mathcal{S}_3)$ be a (para-)hypersymplectic structure on a pre-Courant algebroid $(E, \langle .,. \rangle, \Theta)$.
        \begin{enumerate}
        \item If $(\mathcal{S}_1, \mathcal{S}_2, \mathcal{S}_3)$ is a hypersymplectic structure, then $(\mathcal{T}_i, \mathcal{G})$ is a hermitian pair, for all $i=1,2,3$.
        \item If $(\mathcal{S}_1, \mathcal{S}_2, \mathcal{S}_3)$ is a para-hypersymplectic structure, then $(\mathcal{T}_1, \mathcal{G})$ and $(\mathcal{T}_2, \mathcal{G})$ are para-hermitian pairs while $(\mathcal{T}_3, \mathcal{G})$ is a hermitian pair.
         \end{enumerate}
\end{prop}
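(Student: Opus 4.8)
The plan is to deduce everything from Proposition~\ref{epsilon_hermiticity}, which already gives
$$\langle \mathcal{G}\mathcal{T}_i (X),\mathcal{T}_i (Y)\rangle=\varepsilon_{i-1}\varepsilon_{i+1}\langle \mathcal{G}(X), Y\rangle,$$
for all $X,Y\in\Gamma(E)$, together with the sign bookkeeping in $\mathbb{Z}_3$. First I would recall what role $\mathcal{G}$ plays: by the proposition just stated it is a pseudo-metric on $E$, and by Theorem~\ref{T_i_Nijenhuis} each $\mathcal{T}_i$ is a complex structure when $\varepsilon_i=-1$ and a para-complex structure when $\varepsilon_i=1$; moreover each $\mathcal{T}_i$ is skew-symmetric (it is $\varepsilon_{i-1}\mathcal{S}_{i-1}\mathcal{S}_{i+1}$ with the $\mathcal{S}_j$ skew-symmetric, and by Proposition~\ref{propriedades_Ti} i) one checks $\mathcal{T}_i^*=\varepsilon_1\varepsilon_2\varepsilon_3\mathcal{T}_i=-\mathcal{T}_i$ since we are in the case $\varepsilon_1\varepsilon_2\varepsilon_3=-1$). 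So in all cases $(\mathcal{T}_i,\mathcal{G})$ is a candidate pair in the sense of Definition~\ref{def_ParaHermitian_preCourant}, and the only thing to verify is the (para-)hermiticity identity (\ref{para-hermiticity_condition}).

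Next I would evaluate the coefficient $\varepsilon_{i-1}\varepsilon_{i+1}$ in each of the two cases. In the hypersymplectic case $\varepsilon_1=\varepsilon_2=\varepsilon_3=-1$, so $\varepsilon_{i-1}\varepsilon_{i+1}=(-1)(-1)=1$ for every $i$, and Proposition~\ref{epsilon_hermiticity} reads $\langle\mathcal{G}\mathcal{T}_i(X),\mathcal{T}_i(Y)\rangle=\langle\mathcal{G}(X),Y\rangle$; since in this case every $\mathcal{T}_i$ is a complex structure (all $\varepsilon_i=-1$), this is exactly the hermiticity condition, so $(\mathcal{T}_i,\mathcal{G})$ is a hermitian pair for all $i$, proving (i). In the para-hypersymplectic case, normalized so that $\varepsilon_1=\varepsilon_2=1$ and $\varepsilon_3=-1$ as agreed in Section~\ref{section_4}, I compute $\varepsilon_{i-1}\varepsilon_{i+1}$ for $i=1,2,3$ (indices in $\mathbb{Z}_3$): for $i=1$ it is $\varepsilon_3\varepsilon_2=(-1)(1)=-1$; for $i=2$ it is $\varepsilon_1\varepsilon_3=(1)(-1)=-1$; for $i=3$ it is $\varepsilon_2\varepsilon_1=(1)(1)=1$. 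Pairing this with Theorem~\ref{T_i_Nijenhuis}: $\mathcal{T}_1$ and $\mathcal{T}_2$ are para-complex structures (since $\varepsilon_1=\varepsilon_2=1$) and the coefficient is $-1$, which is precisely the para-hermiticity condition, so $(\mathcal{T}_1,\mathcal{G})$ and $(\mathcal{T}_2,\mathcal{G})$ are para-hermitian pairs; $\mathcal{T}_3$ is a complex structure (since $\varepsilon_3=-1$) and the coefficient is $1$, which is the hermiticity condition, so $(\mathcal{T}_3,\mathcal{G})$ is a hermitian pair. This proves (ii).

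There is essentially no obstacle here — the proposition is a direct corollary, exactly as the text announces ("As a direct consequence of Proposition~\ref{epsilon_hermiticity}"). The only point requiring a little care is the case analysis on the sign $\varepsilon_{i-1}\varepsilon_{i+1}$ and matching it correctly against whether $\mathcal{T}_i$ is complex or para-complex, so that one invokes the correct line of (\ref{para-hermiticity_condition}); in particular one must remember (via Remark~\ref{GJ=JG}, though it is not strictly needed) that the relevant identity is the one written in terms of $\langle G(\mathcal{J}X),\mathcal{J}Y\rangle$, which is literally the left-hand side appearing in Proposition~\ref{epsilon_hermiticity}. One also uses implicitly that $\mathcal{G}$ is symmetric and orthogonal — guaranteed by Proposition~\ref{propriedades_G} and Remark~\ref{G^2=id_E} — so that it is genuinely a pseudo-metric and Definition~\ref{def_ParaHermitian_preCourant} applies. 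Writing the two cases out with the $\mathbb{Z}_3$ indices made explicit is all that the proof requires.
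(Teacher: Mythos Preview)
Your proof is correct and follows exactly the approach the paper indicates: the paper gives no proof beyond stating that the proposition is ``a direct consequence of Proposition~\ref{epsilon_hermiticity}'', and you have supplied precisely the sign bookkeeping in $\mathbb{Z}_3$ and the matching against Definition~\ref{def_ParaHermitian_preCourant} that this entails. The invocation of Theorem~\ref{T_i_Nijenhuis} for the (para-)complex nature of $\mathcal{T}_i$ and of Proposition~\ref{propriedades_G} for $\mathcal{G}$ being a pseudo-metric are the right supporting ingredients.
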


Let us define (para-)hyperk\"{a}hler structures on a pre-Courant algebroid\footnote{In \cite{BCG08}, hyperk\"{a}hler structures on Courant algebroids are called generalized hyper-K\"{a}hler structures.} and see how they are related to (para-)hypersymplectic structures.

\begin{defn}\label{def_PHK_preCourant}
    A quadruple $(\mathcal{T}_1, \mathcal{T}_2, \mathcal{T}_3, \mathcal{G})$ of endomorphisms on a pre-Courant algebroid $(E, \langle .,. \rangle, \Theta)$ is a \emph{hyperk\"{a}hler} (resp., \emph{para-hyperk\"{a}hler}) if:
                \begin{enumerate}
                    \item $\mathcal{G}$ is a pseudo-metric;
                    \item $\mathcal{T}_1$ and $\mathcal{T}_2$ are anti-commuting complex (resp., para-complex) endomorphisms and $\mathcal{T}_3 = \mathcal{T}_1 \mathcal{T}_2$;
                    \item $(\mathcal{G}, \mathcal{T}_j)_{j=1,2}$ are hermitian (resp., para-hermitian) pairs;
                    \item ${\text{\Fontlukas T}}_{\Theta}(\mathcal{G}\mathcal{T}_j)=0,\ j=1,2,3$.
               \end{enumerate}
\end{defn}

Notice that, when $(\mathcal{T}_1, \mathcal{T}_2, \mathcal{T}_3, \mathcal{G})$ is a (para-)hyperk\"{a}hler structure,
$(\mathcal{G}, \mathcal{T}_3)$ is a hermitian pair and the morphisms $\mathcal{T}_1, \mathcal{T}_2$ and $\mathcal{T}_3$ pairwise anti-commute.

\begin{thm}\label{thm_1_1_corresp}
    The triplet $(\mathcal{S}_1, \mathcal{S}_2, \mathcal{S}_3)$ is a hypersymplectic (resp., para-hypersymplectic) structure on a pre-Courant algebroid $(E, \Theta)$ if and only if $(\mathcal{T}_1, \mathcal{T}_2, \mathcal{T}_3, \mathcal{G})$ is a hyperk\"{a}hler (resp., para-hyperk\"{a}hler) structure on $(E, \Theta)$, with $\mathcal{S}_i=\varepsilon_i\varepsilon_{i-1}\mathcal{G}\mathcal{T}_i$, $i=1,2,3$.
\end{thm}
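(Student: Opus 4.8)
The plan is to prove the equivalence by carefully translating each of the three conditions in Definition~\ref{def_epsilonHS_pre-Courant} into the four conditions of Definition~\ref{def_PHK_preCourant}, using the relations among $\mathcal{S}_i$, $\mathcal{T}_i$ and $\mathcal{G}$ already established in Section~\ref{section_2}. First, for the forward direction, assume $(\mathcal{S}_1, \mathcal{S}_2, \mathcal{S}_3)$ is a (para-)hypersymplectic structure. Then $\mathcal{G}$ is a pseudo-metric by Proposition~\ref{propriedades_G} (together with Remark~\ref{G^2=id_E}), which gives condition i). Condition ii) follows from Theorem~\ref{T_i_Nijenhuis} (the $\mathcal{T}_i$ are Nijenhuis and are complex or para-complex according to the sign of $\varepsilon_i$), from Proposition~\ref{propriedades_Ti} iii) which gives $\mathcal{T}_1\mathcal{T}_2 = \varepsilon_3\mathcal{T}_3 = -\mathcal{T}_3$ in the hypersymplectic case — so one must be slightly careful and observe that $(\mathcal{T}_1,\mathcal{T}_2,-\varepsilon_3\mathcal{T}_3,\mathcal{G})$ is really the quadruple in play, or equivalently reindex — and the anti-commutativity, also from Proposition~\ref{propriedades_Ti} iii) since $\varepsilon_1\varepsilon_2\varepsilon_3=-1$. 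Condition iii) is exactly the content of the proposition following Definition~\ref{def_ParaHermitian_preCourant} (itself a consequence of Proposition~\ref{epsilon_hermiticity}). Condition iv) is the crucial one: we need ${\text{\Fontlukas T}}_{\Theta}(\mathcal{G}\mathcal{T}_j)=0$, and since by Proposition~\ref{propriedades_Ti_Si_G} ii) we have $\mathcal{G}\mathcal{T}_j = \varepsilon_{j-1}\varepsilon_j \mathcal{S}_j$ (equivalently $\mathcal{S}_j = \varepsilon_j\varepsilon_{j-1}\mathcal{G}\mathcal{T}_j$, which is the stated relation), this reduces to ${\text{\Fontlukas T}}_{\Theta}\mathcal{S}_j=0$, which holds by Remark~\ref{rem_equiv_cond_def_epsilonHS_pre-Courant} (i.e.\ condition iii') equivalent to condition iii)).

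For the converse, assume $(\mathcal{T}_1,\mathcal{T}_2,\mathcal{T}_3,\mathcal{G})$ is a (para-)hyperk\"{a}hler structure and \emph{define} $\mathcal{S}_i := \varepsilon_i\varepsilon_{i-1}\mathcal{G}\mathcal{T}_i$. One must verify the three conditions of Definition~\ref{def_epsilonHS_pre-Courant}. Skew-symmetry of $\mathcal{S}_i$: since $\mathcal{G}$ is symmetric and $\mathcal{T}_i$ is skew-symmetric with $\mathcal{G}\mathcal{T}_i=\mathcal{T}_i\mathcal{G}$ (by the hermitian-pair condition iii) and Remark~\ref{GJ=JG}), the composite $\mathcal{G}\mathcal{T}_i$ is skew-symmetric. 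Condition i), $\mathcal{S}_i^2 = \varepsilon_i\,{\rm id}_E$: compute $\mathcal{S}_i^2 = (\mathcal{G}\mathcal{T}_i)^2 = \mathcal{G}^2\mathcal{T}_i^2 = \mathcal{T}_i^2 = \varepsilon_i\,{\rm id}_E$ using $\mathcal{G}^2={\rm id}_E$, commutation, and that $\mathcal{T}_i$ is complex/para-complex. Condition ii), the (anti-)commutation relations among the $\mathcal{S}_i$: expand $\mathcal{S}_i\mathcal{S}_j$ and $\mathcal{S}_j\mathcal{S}_i$ in terms of $\mathcal{G}$ and $\mathcal{T}_i,\mathcal{T}_j$, pushing $\mathcal{G}$'s together (they commute with all $\mathcal{T}$'s, and $\mathcal{G}^2={\rm id}_E$), and use $\mathcal{T}_i\mathcal{T}_j=-\mathcal{T}_j\mathcal{T}_i$ from hyperk\"{a}hler condition ii); track the $\varepsilon$ factors to land on $\mathcal{S}_i\mathcal{S}_j=-\mathcal{S}_j\mathcal{S}_i=\varepsilon_1\varepsilon_2\varepsilon_3\mathcal{S}_j\mathcal{S}_i$. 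Condition iii): by Remark~\ref{rem_equiv_cond_def_epsilonHS_pre-Courant} it suffices to check ${\text{\Fontlukas T}}_{\Theta}\mathcal{S}_i=0$, and $\mathcal{S}_i=\varepsilon_i\varepsilon_{i-1}\mathcal{G}\mathcal{T}_i$, so this is precisely hyperk\"{a}hler condition iv). One also needs the consistency relation $\mathcal{T}_3=\mathcal{T}_1\mathcal{T}_2$ to match the cyclic structure of $\mathbb{Z}_3$ used in the definition of the $\mathcal{S}_i$, so a small bookkeeping check that $\mathcal{S}_3$ defined from $\mathcal{T}_3$ agrees with $\varepsilon_{3-1}\mathcal{S}_2\mathcal{S}_1$-type expressions is in order.

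The main obstacle I expect is the index/sign bookkeeping in $\mathbb{Z}_3$: the morphisms $\mathcal{T}_i$ defined by (\ref{def_transition}) satisfy $\mathcal{T}_1\mathcal{T}_2=\varepsilon_3\mathcal{T}_3$ rather than $\mathcal{T}_1\mathcal{T}_2=\mathcal{T}_3$, whereas Definition~\ref{def_PHK_preCourant} ii) demands $\mathcal{T}_3=\mathcal{T}_1\mathcal{T}_2$ on the nose. In the hypersymplectic case $\varepsilon_3=-1$, so these differ by a sign, which means the quadruple appearing in the correspondence is not literally $(\mathcal{T}_1,\mathcal{T}_2,\mathcal{T}_3,\mathcal{G})$ with the $\mathcal{T}_i$ of (\ref{def_transition}) but rather one of them negated (or, equivalently, the theorem implicitly re-labels); similarly in the para-hypersymplectic case. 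Getting this alignment exactly right, and making sure the relation $\mathcal{S}_i=\varepsilon_i\varepsilon_{i-1}\mathcal{G}\mathcal{T}_i$ is consistent in both directions with this sign convention, is where the real care is needed — everything else is a routine unwinding of Propositions~\ref{propriedades_Ti}, \ref{propriedades_G} and \ref{propriedades_Ti_Si_G} together with Remark~\ref{rem_equiv_cond_def_epsilonHS_pre-Courant}. I would handle this by treating the hypersymplectic and para-hypersymplectic cases in parallel, carrying the symbol $\varepsilon_1\varepsilon_2\varepsilon_3=-1$ throughout and only specializing the individual $\varepsilon_i$ at the very end when identifying ``complex'' versus ``para-complex''.
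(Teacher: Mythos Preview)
Your approach is essentially identical to the paper's: the forward direction is dispatched by citing the propositions of Section~\ref{section_2} and Theorem~\ref{T_i_Nijenhuis}, and the converse defines $\mathcal{S}_i=\varepsilon_i\varepsilon_{i-1}\mathcal{G}\mathcal{T}_i$ and verifies conditions i)--iii) of Definition~\ref{def_epsilonHS_pre-Courant} by the same short computations (commutation of $\mathcal{G}$ with $\mathcal{T}_i$ via Remark~\ref{GJ=JG}, $\mathcal{G}^2={\rm id}_E$, anti-commutation of the $\mathcal{T}_i$, and condition iv) of Definition~\ref{def_PHK_preCourant} for the Nijenhuis torsion). Your explicit check of skew-symmetry of $\mathcal{S}_i$ is a point the paper leaves implicit.

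The ``main obstacle'' you flag, however, is based on a misreading and is not an obstacle at all. Proposition~\ref{propriedades_Ti} iii) gives $\mathcal{T}_{i-1}\mathcal{T}_{i+1}=\varepsilon_i\mathcal{T}_i$; for $i=3$ this is $\mathcal{T}_2\mathcal{T}_1=\varepsilon_3\mathcal{T}_3$, not $\mathcal{T}_1\mathcal{T}_2=\varepsilon_3\mathcal{T}_3$. Combined with anti-commutation ($\varepsilon_1\varepsilon_2\varepsilon_3=-1$) one gets $\mathcal{T}_1\mathcal{T}_2=-\varepsilon_3\mathcal{T}_3=\varepsilon_1\varepsilon_2\mathcal{T}_3$. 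In both the hypersymplectic case ($\varepsilon_1=\varepsilon_2=\varepsilon_3=-1$) and the para-hypersymplectic case ($\varepsilon_1=\varepsilon_2=1$, $\varepsilon_3=-1$) one has $\varepsilon_1\varepsilon_2=1$, so $\mathcal{T}_1\mathcal{T}_2=\mathcal{T}_3$ on the nose, matching Definition~\ref{def_PHK_preCourant} ii) exactly. No re-labelling or sign-flipping of $\mathcal{T}_3$ is needed, and the relation $\mathcal{S}_i=\varepsilon_i\varepsilon_{i-1}\mathcal{G}\mathcal{T}_i$ is already consistent in both directions.
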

\begin{proof}
    If $(\mathcal{S}_1, \mathcal{S}_2, \mathcal{S}_3)$ is a (para-)hypersymplectic structure on $(E, \Theta)$ then, from what we have proved so far, we easily conclude that $(\mathcal{T}_1, \mathcal{T}_2, \mathcal{T}_3, \mathcal{G})$ is a (para-)hyperk\"{a}hler structure on $(E, \Theta)$.

    Let us now assume that $(\mathcal{T}_1, \mathcal{T}_2, \mathcal{T}_3, \mathcal{G})$ is a (para-)hyperk\"{a}hler structure on $(E, \Theta)$.
    Then, we have
    $$\mathcal{S}_i^2=\mathcal{G}\mathcal{T}_i\mathcal{G}\mathcal{T}_i=\mathcal{G}^2\mathcal{T}_i^2=\mathcal{T}_i^2=\varepsilon_i {\rm id}_E,$$
    where we used the fact that $\mathcal{G}$ and $\mathcal{T}_i$ commute and $\mathcal{G}^2={\rm id}_E$ (see Remarks \ref{GJ=JG} and \ref{G^2=id_E}).
    Moreover,
    $$\mathcal{S}_i\mathcal{S}_{i+1}=\varepsilon_i\varepsilon_{i-1}\varepsilon_{i+1}\varepsilon_i\mathcal{G}\mathcal{T}_i\mathcal{G}\mathcal{T}_{i+1}
        =\varepsilon_{i-1}\varepsilon_{i+1}\mathcal{T}_i\mathcal{T}_{i+1}=-\varepsilon_{i-1}\varepsilon_{i+1}\mathcal{T}_{i+1}\mathcal{T}_i
        =-\mathcal{S}_{i+1}\mathcal{S}_i.$$
    Finally, because $\mathcal{S}_i^2=\varepsilon_i {\rm id}_E$ and ${\text{\Fontlukas T}}_{\Theta}\mathcal{S}_i=0$ (see item iv) of Definition \ref{def_PHK_preCourant}) we conclude that $\Theta_{\mathcal{S}_i,\mathcal{S}_i}=\varepsilon_i \Theta$. Therefore, $(\mathcal{S}_1, \mathcal{S}_2, \mathcal{S}_3)$ is a (para-)hypersymplectic structure on $(E, \Theta)$.
\end{proof}

Next, we see that the tetrahedron model (see Figure \ref{Fig. 3}), besides being an efficient way to summarize all the algebraic relations between the morphisms  of a (para-)hypersymplectic structure, is an accurate representation that enables us to discover new relations. In fact, the next theorem shows that the symmetries of the tetrahedron are symmetries of the (para-)hypersymplectic structures on pre-Courant algebroids. These symmetries do not exist for (para-)hypersymplectic structures on Lie algebroids (see definition in \cite{AC13}).

\begin{thm}\label{thm_invariant_by_rotation}
    The triplet $(\mathcal{S}_1, \mathcal{S}_2, \mathcal{S}_3)$ is a hypersymplectic (resp., para-hypersymplectic) structure on a pre-Courant algebroid $(E, \Theta)$ if and only if $(\mathcal{S}_1, \mathcal{T}_2, \mathcal{T}_3)$ is a hypersymplectic (resp., para-hypersymplectic) structure on $(E, \Theta)$. Furthermore, both (para-)hypersymplectic structures determine equal or opposite pseudo-metrics.
\end{thm}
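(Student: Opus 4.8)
The plan is to check, directly from the algebraic relations already established in Section~\ref{section_2} and from Theorem~\ref{T_i_Nijenhuis}, that the triplet $(\mathcal{S}_1, \mathcal{T}_2, \mathcal{T}_3)$ satisfies the three conditions of Definition~\ref{def_epsilonHS_pre-Courant} with the \emph{same} parameter triple $\boldsymbol{\varepsilon}$; since $\varepsilon_1\varepsilon_2\varepsilon_3=-1$ this automatically puts it in the (para-)hypersymplectic regime and shows it is of the same type as $(\mathcal{S}_1, \mathcal{S}_2, \mathcal{S}_3)$ (namely hypersymplectic precisely when $\varepsilon_1=\varepsilon_2=\varepsilon_3=-1$). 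The converse I would obtain by applying this forward implication to $(\mathcal{S}_1, \mathcal{T}_2, \mathcal{T}_3)$ itself, after observing that one more round of the construction (\ref{def_transition}) carries $(\mathcal{S}_1, \mathcal{T}_2, \mathcal{T}_3)$ back to the triple $(\mathcal{S}_1, \mathcal{S}_2, \mathcal{S}_3)$ up to signs in its entries. Finally, comparing the pseudo-metrics is a one-line computation.

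For the forward direction, assume $(\mathcal{S}_1, \mathcal{S}_2, \mathcal{S}_3)$ is (para-)hypersymplectic. Skew-symmetry of $\mathcal{T}_2,\mathcal{T}_3$ is Proposition~\ref{propriedades_Ti}~i) (here one uses $\varepsilon_1\varepsilon_2\varepsilon_3=-1$), and $\mathcal{S}_1$ is skew-symmetric by hypothesis. Condition~i) of Definition~\ref{def_epsilonHS_pre-Courant}, namely $\mathcal{S}_1^2=\varepsilon_1{\rm id}_E$, $\mathcal{T}_2^2=\varepsilon_2{\rm id}_E$, $\mathcal{T}_3^2=\varepsilon_3{\rm id}_E$, is the hypothesis on $\mathcal{S}_1$ together with Proposition~\ref{propriedades_Ti}~ii); in particular the parameter triple is unchanged. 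Condition~ii) is the pairwise anti-commutativity of $\mathcal{S}_1,\mathcal{T}_2,\mathcal{T}_3$: $\mathcal{T}_2\mathcal{T}_3=-\mathcal{T}_3\mathcal{T}_2$ comes from Proposition~\ref{propriedades_Ti}~iii), while $\mathcal{S}_1\mathcal{T}_2=-\mathcal{T}_2\mathcal{S}_1$ and $\mathcal{S}_1\mathcal{T}_3=-\mathcal{T}_3\mathcal{S}_1$ come from Proposition~\ref{propriedades_Ti_Si_G}~iv). By Remark~\ref{rem_equiv_cond_def_epsilonHS_pre-Courant}, condition~iii) may be replaced by the vanishing of the Nijenhuis torsions; ${\text{\Fontlukas T}}_{\Theta}\mathcal{S}_1=0$ is the hypothesis and ${\text{\Fontlukas T}}_{\Theta}\mathcal{T}_2={\text{\Fontlukas T}}_{\Theta}\mathcal{T}_3=0$ is Theorem~\ref{T_i_Nijenhuis}. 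Hence $(\mathcal{S}_1, \mathcal{T}_2, \mathcal{T}_3)$ is a (para-)hypersymplectic structure on $(E,\Theta)$ of the same type.

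For the converse, note that from (\ref{def_transition}) and condition~i) one has the purely algebraic identities $\mathcal{S}_3=\mathcal{S}_1\mathcal{T}_2$ and $\mathcal{S}_2=\varepsilon_1\varepsilon_2\,\mathcal{T}_3\mathcal{S}_1$. Feeding these into (\ref{def_transition}) applied to the triple $(\mathcal{S}_1, \mathcal{T}_2, \mathcal{T}_3)$, and simplifying with its own squares and anti-commutation relations, one finds that the triple obtained by the same construction equals $(\mathcal{S}_1, \eta_2\mathcal{S}_2, \eta_3\mathcal{S}_3)$ for some signs $\eta_2,\eta_3\in\{\pm1\}$. Thus if $(\mathcal{S}_1, \mathcal{T}_2, \mathcal{T}_3)$ is (para-)hypersymplectic, the forward direction shows that $(\mathcal{S}_1, \eta_2\mathcal{S}_2, \eta_3\mathcal{S}_3)$ is (para-)hypersymplectic; and since replacing $\mathcal{S}_i$ by $\eta_i\mathcal{S}_i$ with $\eta_i\in\{\pm1\}$ preserves each of conditions i)--iii) of Definition~\ref{def_epsilonHS_pre-Courant} with the same $\boldsymbol{\varepsilon}$ (for iii) one uses $\Theta_{\eta\mathcal{I},\eta\mathcal{I}}=\eta^2\Theta_{\mathcal{I},\mathcal{I}}$), it follows that $(\mathcal{S}_1, \mathcal{S}_2, \mathcal{S}_3)$ is (para-)hypersymplectic, again of the same type. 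As for the pseudo-metrics, denoting by $\mathcal{G}'$ the morphism (\ref{def_mathcalG}) associated with $(\mathcal{S}_1, \mathcal{T}_2, \mathcal{T}_3)$, Proposition~\ref{propriedades_Ti}~iii) gives $\mathcal{G}'=\mathcal{T}_3\mathcal{T}_2\mathcal{S}_1=\varepsilon_1\mathcal{T}_1\mathcal{S}_1$, and then Proposition~\ref{propriedades_Ti_Si_G}~i) gives $\mathcal{G}'=\varepsilon_1\varepsilon_3\,\mathcal{G}$; since $\varepsilon_1\varepsilon_2\varepsilon_3=-1$ this is $-\varepsilon_2\,\mathcal{G}=\pm\,\mathcal{G}$, so the two pseudo-metrics are equal or opposite.

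I do not expect a conceptual obstacle here: the argument is an assembly of the relations in Propositions~\ref{propriedades_Ti}, \ref{propriedades_Ti_Si_G}, \ref{propriedades_G} and of Theorem~\ref{T_i_Nijenhuis}. The part demanding the most care is the systematic bookkeeping of the $\mathbb{Z}_3$-indices and of the signs $\varepsilon_i$ in those manipulations, and, in the converse, keeping track of the (harmless) global signs separating $(\mathcal{S}_1, \mathcal{S}_2, \mathcal{S}_3)$ from the triple obtained by iterating the construction, so that the sign-invariance of Definition~\ref{def_epsilonHS_pre-Courant} can be invoked without ambiguity.
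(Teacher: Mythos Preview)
Your proposal is correct and follows essentially the same approach as the paper's own proof: both directions rest on the algebraic relations of Propositions~\ref{propriedades_Ti} and~\ref{propriedades_Ti_Si_G} together with Theorem~\ref{T_i_Nijenhuis} for the forward implication, and on iterating the construction (up to signs) for the converse. The paper is simply more explicit in the converse, identifying the new transition morphisms as $\varepsilon_1\varepsilon_3\mathcal{T}_1,\ \varepsilon_1\varepsilon_3\mathcal{S}_2,\ \varepsilon_1\varepsilon_3\mathcal{S}_3$ (your $\eta_2=\eta_3=\varepsilon_1\varepsilon_3$), and your pseudo-metric computation $\mathcal{G}'=\varepsilon_1\varepsilon_3\,\mathcal{G}$ coincides with the paper's one-line comparison.
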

\begin{proof}
    If $(\mathcal{S}_1, \mathcal{S}_2, \mathcal{S}_3)$ is a (para-)hypersymplectic structure then, previous definitions and results yield,
    $$\left\{
      \begin{array}{l}
        \mathcal{S}_i^2=\mathcal{T}_i^2=\varepsilon_i {\rm id}_E;\\
        \mathcal{S}_{i}\mathcal{S}_j+\mathcal{S}_{j}\mathcal{S}_i=\mathcal{T}_{i}\mathcal{T}_j+\mathcal{T}_{j}\mathcal{T}_i
            =\mathcal{S}_{i}\mathcal{T}_j+\mathcal{T}_{j}\mathcal{S}_i=0;\\
        {\text{\Fontlukas T}}_{\Theta}\mathcal{S}_i={\text{\Fontlukas T}}_{\Theta}\mathcal{T}_i=0.
      \end{array}
    \right.$$
Thus, $(\mathcal{S}_1, \mathcal{T}_2, \mathcal{T}_3)$ is a (para-)hypersymplectic structure.

Now, let us assume that $(\mathcal{S}_1, \mathcal{T}_2, \mathcal{T}_3)$ is a (para-)hypersymplectic structure. In this case, the transition morphisms are $\varepsilon_{1}\varepsilon_{3}\mathcal{T}_1, \varepsilon_{1}\varepsilon_{3}\mathcal{S}_2$ and $\varepsilon_{1}\varepsilon_{3}\mathcal{S}_3$ and, using the first part of the proof, we conclude that $(\mathcal{S}_1, \varepsilon_{1}\varepsilon_{3}\mathcal{S}_2, \varepsilon_{1}\varepsilon_{3}\mathcal{S}_3)$ is a (para-)hypersymplectic structure. Therefore, $(\mathcal{S}_1, \mathcal{S}_2,\mathcal{S}_3)$ is a (para-)hypersymplectic structure.

Finally, because $\mathcal{S}_3\mathcal{S}_2\mathcal{S}_1=\varepsilon_{1}\varepsilon_{3}\mathcal{T}_3\mathcal{T}_2\mathcal{S}_1$, the pseudo-metrics induced by both (para-)hypersymplectic structures are equal or opposite.
\end{proof}

Applying successively Theorems \ref{thm_1_1_corresp} and \ref{thm_invariant_by_rotation}, we conclude that one (para-)hypersymplectic structure $(\mathcal{S}_1, \mathcal{S}_2,\mathcal{S}_3)$ on $(E, \Theta)$ induces several (para-)hypersymplectic and (para-)hyperk\"{a}hler structures on $(E, \Theta)$, as the next diagram shows.

$$\xymatrixcolsep{4pc}\xymatrix{
    {(\mathcal{S}_1, \mathcal{S}_2, \mathcal{S}_3)\textrm{ (para-)hypersymplectic}} \ar@{<->}[r]^{\scriptsize\ \ Thm\ \ref{thm_1_1_corresp}} \ar@{<->}[d]_{\scriptsize Thm\ \ref{thm_invariant_by_rotation}}&  {(\mathcal{T}_1, \mathcal{T}_2, \mathcal{T}_3, \mathcal{G})\textrm{ (para-)hyperk\"{a}hler}} \ar@{<->}[d]\\
    {(\mathcal{S}_1, \mathcal{T}_2, \mathcal{T}_3)\textrm{ (para-)hypersymplectic}} \ar@{<->}[r]^{\scriptsize\ \ Thm\ \ref{thm_1_1_corresp}} \ar@{<->}[d]_{\scriptsize Thm\ \ref{thm_invariant_by_rotation}}&  {(\mathcal{T}_1, \mathcal{S}_2, \mathcal{S}_3, \mathcal{G})\textrm{ (para-)hyperk\"{a}hler}} \ar@{<->}[d]\\
    {(\mathcal{T}_1, \mathcal{S}_2, \mathcal{T}_3)\textrm{ (para-)hypersymplectic}} \ar@{<->}[r]^{\scriptsize\ \ Thm\ \ref{thm_1_1_corresp}} \ar@{<->}[d]_{\scriptsize Thm\ \ref{thm_invariant_by_rotation}}&  {(\mathcal{S}_1, \mathcal{T}_2, \mathcal{S}_3, \mathcal{G})\textrm{ (para-)hyperk\"{a}hler}} \ar@{<->}[d]\\
    {(\mathcal{T}_1, \mathcal{T}_2, \mathcal{S}_3)\textrm{ (para-)hypersymplectic}} \ar@{<->}[r]^{\scriptsize\ \ Thm\ \ref{thm_1_1_corresp}}&  {(\mathcal{S}_1, \mathcal{S}_2, \mathcal{T}_3, \mathcal{G})\textrm{ (para-)hyperk\"{a}hler}}
    }$$

\


\section{Hypersymplectic structures on Lie algebroids}
\label{section_6}

The purpose of this section is to present a first example of an $\boldsymbol{\varepsilon}$-hypersymplectic structure on a Courant algebroid, which is constructed out of an $\boldsymbol{\varepsilon}$-hypersymplectic structure on a Lie algebroid. First, we recall the definition and some properties of the latter \cite{A10, AC13}.


 An $\boldsymbol{\varepsilon}${\em -hypersymplectic structure} on a Lie algebroid $(A, \mu)$ is a triplet $(\omega_1, \omega_2, \omega_3)$ of symplectic forms with inverse Poisson bivectors $(\pi_1, \pi_2, \pi_3)$ such that the transition endomorphisms $N_1$, $N_2$ and $N_3$ on A, defined by
 \begin{equation} \label{transition_A}
 N_i:=\pi^\#_{i-1} \circ \omega^\flat_{i+1},\quad i \in \mathbb Z_3,\end{equation}
 satisfy
\begin{equation} \label{transitions_A}
N_i^2 = \varepsilon_i {\mathrm{id}}_A, \quad i=1,2,3.
\end{equation}
An important property of the transitions morphisms $N_i$, $i=1,2,3$, is that they are Nijenhuis morphisms.

Having an $\boldsymbol{\varepsilon}$-hypersymplectic structure on a Lie algebroid $(A, \mu)$, we define  $g \in \bigotimes^2 A^*$ by setting, for all $X,Y \in \Gamma(A)$, $$g(X,Y):=\langle g^\flat X, Y\rangle,$$ where $g^\flat: A\longrightarrow A^*$ is given by
    \begin{equation}\label{first_defn _g}
    g^\flat:=\varepsilon_3\varepsilon_2\ {\omega_3}^{\flat} \circ {\pi_1}^{\sharp} \circ {\omega_2}^{\flat}.
    \end{equation}
The definition of $g^\flat$ is not affected by a circular permutation of the indices in equation~(\ref{first_defn _g}), that is,
\begin{equation} \label{second_defn_g}
g^\flat=\varepsilon_{i-1}\varepsilon_{i+1}\ {\omega_{i-1}}^{\flat} \circ {\pi_i}^{\sharp} \circ {\omega_{i+1}}^{\flat},
\end{equation}
for all $i \in \mathbb{Z}_3$.
Moreover, we have
\begin{equation*}  \label{skew_or_not}
(g^\flat)^* = - \varepsilon_1\varepsilon_2\varepsilon_3\ g^\flat,
\end{equation*}
which means that
 $g$ is symmetric or skew-symmetric, depending on the sign of the product $\varepsilon_1\varepsilon_2\varepsilon_3$.
When $\varepsilon_1 \varepsilon_2 \varepsilon_3=-1$, the morphism $g^\flat$ defined by (\ref{first_defn _g}) determines a {\em pseudo-metric} on $A$.

\

Let $(A,\mu)$ be a Lie algebroid and consider the Courant algebroid $(A\oplus A^*,\mu)$. If we take a triplet $(\omega_1, \omega_2, \omega_3)$ of $2$-forms and a triplet $(\pi_1,\pi_2, \pi_3)$ of bivectors on $A$, we may define the skew-symmetric bundle endomorphisms \mbox{$\mathcal{S}_i: A \oplus A^* \to A \oplus A^*$}, $i=1,2,3$,
\begin{equation} \label{definition_Si}
\mathcal{S}_i:=\left[
                    \begin{array}{ccc}
                    0&\ &\varepsilon_i\,\pi_{i}^{\sharp}\\
                    \omega_{i}^{\flat}&\ &0
                    \end{array}
\right].
\end{equation}
In order to simplify the writing and if there is no risk of confusion, we shall omit the symbols $\sharp$ and $\flat$ and denote the morphisms $\omega_{i}^{\flat}$ and $\pi_{i}^{\sharp}$ by $\omega_{i}$ and $\pi_{i}$, respectively. Moreover, in the supergeometric setting, we have
\begin{equation*}
\mathcal{S}_i(X+ \alpha)= \{X+ \alpha, \omega_i + \varepsilon_i \pi_i \},
\end{equation*}
for all $X + \alpha \in A \oplus A^*$.

\begin{lem} \label{lem_algebraic_conditions}
Let $\omega_1$, $\omega_2$ and $\omega_3$  be $2$-forms on a vector bundle $A$ over $M$ and $\pi_1$, $\pi_2$ and $\pi_3$ bivectors on $A$. Consider the vector bundle morphisms $N_1$, $N_2$ and $N_3$ on $A$,  given by (\ref{transition_A}), and the bundle endomorphisms $\mathcal{S}_1$, $\mathcal{S}_2$ and $\mathcal{S}_3$ on $A \oplus A^*$, given by (\ref{definition_Si}). Then, for all $i=1,2,3$,
\begin{enumerate}
\item
$\mathcal{S}_i^2= \varepsilon_i \rm{id}_{A \oplus A^*} \, \Leftrightarrow \, \pi_i \circ \omega_i=\rm{id}_A$,
\item  $\mathcal{S}_{i-1}\mathcal{S}_{i+1}=\varepsilon_1\varepsilon_2\varepsilon_3 \, \mathcal{S}_{i+1}\mathcal{S}_{i-1} \, \Leftrightarrow \, N_i^2= \varepsilon_i \rm{id}_A.$
\end{enumerate}
\end{lem}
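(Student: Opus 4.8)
The plan is to do everything in $2\times2$ block form with respect to the splitting $A\oplus A^*$, reduce each of the two equivalences to identities between bundle maps of $A$ and of $A^*$, and then exploit the skew-adjointness of $\omega_i^{\flat}$ and $\pi_i^{\sharp}$ — which holds precisely because $\omega_i$ is a $2$-form and $\pi_i$ a bivector — to collapse each pair of identities to a single one on $A$.

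For part (i) I would start from (\ref{definition_Si}) and multiply blockwise, obtaining
$$\mathcal{S}_i^{\,2}=\left[\begin{array}{cc}\varepsilon_i\,\pi_i^{\sharp}\circ\omega_i^{\flat} & 0\\ 0 & \varepsilon_i\,\omega_i^{\flat}\circ\pi_i^{\sharp}\end{array}\right],$$
so that $\mathcal{S}_i^{\,2}=\varepsilon_i\,{\rm id}_{A\oplus A^*}$ is equivalent to the conjunction $\pi_i^{\sharp}\circ\omega_i^{\flat}={\rm id}_A$ \emph{and} $\omega_i^{\flat}\circ\pi_i^{\sharp}={\rm id}_{A^*}$; this already gives one implication. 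For the converse, under the identification $A^{**}\simeq A$ one has $(\omega_i^{\flat})^{*}=-\omega_i^{\flat}$ and $(\pi_i^{\sharp})^{*}=-\pi_i^{\sharp}$, hence $(\pi_i^{\sharp}\circ\omega_i^{\flat})^{*}=\omega_i^{\flat}\circ\pi_i^{\sharp}$, so that $\pi_i^{\sharp}\circ\omega_i^{\flat}={\rm id}_A$ forces $\omega_i^{\flat}\circ\pi_i^{\sharp}={\rm id}_{A^*}$ by transposition, and $\mathcal{S}_i^{\,2}=\varepsilon_i\,{\rm id}$ follows.

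For part (ii), the same blockwise multiplication — now using (\ref{transition_A}) and the identity $\omega_{i+1}^{\flat}\circ\pi_{i-1}^{\sharp}=(\pi_{i-1}^{\sharp}\circ\omega_{i+1}^{\flat})^{*}=N_i^{*}$, again from skew-adjointness — gives
$$\mathcal{S}_{i-1}\mathcal{S}_{i+1}=\left[\begin{array}{cc}\varepsilon_{i-1}N_i & 0\\ 0 & \varepsilon_{i+1}\,\omega_{i-1}^{\flat}\circ\pi_{i+1}^{\sharp}\end{array}\right],\qquad \mathcal{S}_{i+1}\mathcal{S}_{i-1}=\left[\begin{array}{cc}\varepsilon_{i+1}\,\pi_{i+1}^{\sharp}\circ\omega_{i-1}^{\flat} & 0\\ 0 & \varepsilon_{i-1}N_i^{*}\end{array}\right].$$
Since the $A^{*}$-block of $\mathcal{S}_{i-1}\mathcal{S}_{i+1}=\varepsilon_1\varepsilon_2\varepsilon_3\,\mathcal{S}_{i+1}\mathcal{S}_{i-1}$ is the transpose of its $A$-block, the identity is equivalent to $\varepsilon_{i-1}N_i=\varepsilon_1\varepsilon_2\varepsilon_3\,\varepsilon_{i+1}\,\pi_{i+1}^{\sharp}\circ\omega_{i-1}^{\flat}$, that is, using $\varepsilon_1\varepsilon_2\varepsilon_3=\varepsilon_{i-1}\varepsilon_i\varepsilon_{i+1}$ and $\varepsilon_j^{2}=1$, to $N_i=\varepsilon_i\,\pi_{i+1}^{\sharp}\circ\omega_{i-1}^{\flat}$. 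Finally, since $\pi_j^{\sharp}$ and $\omega_j^{\flat}$ are mutually inverse for $j=i-1,i+1$ (this is part (i) at those indices), one has $\pi_{i+1}^{\sharp}\circ\omega_{i-1}^{\flat}=(\omega_{i+1}^{\flat})^{-1}\circ\omega_{i-1}^{\flat}=(\pi_{i-1}^{\sharp}\circ\omega_{i+1}^{\flat})^{-1}=N_i^{-1}$, so the condition becomes $N_i=\varepsilon_i N_i^{-1}$, i.e.\ $N_i^{2}=\varepsilon_i\,{\rm id}_A$.

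I expect the block computations themselves to be entirely routine; the two spots that need care are the bookkeeping of the signs $\varepsilon_j$ (handled once and for all via $\varepsilon_1\varepsilon_2\varepsilon_3=\varepsilon_{i-1}\varepsilon_i\varepsilon_{i+1}$ and $\varepsilon_j^{2}=1$) and the identification $\pi_{i+1}^{\sharp}\circ\omega_{i-1}^{\flat}=N_i^{-1}$ in part (ii), which is the only step that is not a pure block manipulation and which is where the non-degeneracy of the $\omega_j$ — equivalently part (i) — really enters. The observation that makes the proof short is that the skew-adjointness of the flat and sharp maps forces the $A^{*}$-block of every relevant identity to be the transpose of its $A$-block, so that each equivalence is governed by a single identity on $A$.
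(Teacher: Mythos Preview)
Your proof is correct and follows essentially the same route as the paper's: both compute the $2\times2$ block products and reduce each equivalence to a single identity on $A$ (the paper is terser, dismissing i) as ``a simple computation'' and, for ii), citing \cite{AC13} for the equivalence $N_i^{2}=\varepsilon_i\,{\rm id}_A\Leftrightarrow\omega_{i+1}\circ\pi_{i-1}=\varepsilon_i\,\omega_{i-1}\circ\pi_{i+1}$, whereas you spell out the transpose reduction between the two blocks). Your explicit appeal in ii) to $\pi_j^{\sharp}=(\omega_j^{\flat})^{-1}$ is not literally a hypothesis of the lemma, but the paper's proof makes the same tacit assumption through the \cite{AC13} citation, and it is satisfied in every application.
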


\begin{proof}
A simple computation gives i). To prove ii), we notice that $N_i^2= \varepsilon_i \, \rm{id}_A$ is equivalent to
$$\omega_{i+1} \circ \pi_{i-1}= \varepsilon_i \, \omega_{i-1} \circ \pi_{i+1},$$ 
for all $i\in \mathbb Z_3$ (see \cite{AC13}). On the other hand, we have
\begin{equation*}
\mathcal{S}_{i-1}\mathcal{S}_{i+1} = \left[
                    \begin{array}{ccc}
                    \varepsilon_{i-1} \, \pi_{i-1} \circ \omega_{i+1}&\ & 0\\
                    0&\ &\varepsilon_{i+1} \, \omega_{i-1}\circ \pi_{i+1}
                    \end{array}
\right]
\end{equation*}
and
\begin{multline*}
\varepsilon_1 \varepsilon_2 \varepsilon_3 \mathcal{S}_{i+1}\mathcal{S}_{i-1} = \varepsilon_1 \varepsilon_2 \varepsilon_3 \left[
                    \begin{array}{ccc}
                    \varepsilon_{i+1} \, \pi_{i+1} \circ \omega_{i-1}&\ & 0\\
                    0&\ &\varepsilon_{i-1} \, \omega_{i+1}\circ \pi_{i-1}
                    \end{array}
\right]\\
=\left[
                    \begin{array}{ccc}
                    \varepsilon_{i-1} \varepsilon_i \, \pi_{i+1} \circ \omega_{i-1}&\ & 0\\
                    0&\ &\varepsilon_{i+1} \varepsilon_i \, \omega_{i+1}\circ \pi_{i-1}
                    \end{array}
\right].
\end{multline*}
So, $\mathcal{S}_{i-1}\mathcal{S}_{i+1}=\varepsilon_1 \varepsilon_2 \varepsilon_3 \mathcal{S}_{i+1}\mathcal{S}_{i-1}$ if and only if
$\omega_{i+1} \circ \pi_{i-1}= \varepsilon_i \, \omega_{i-1} \circ \pi_{i+1}$ and this completes the proof.
\end{proof}

\begin{thm}  \label{structure_A+A*_mu}
A triplet $(\omega_1, \omega_2, \omega_3)$, with inverse  $(\pi_1, \pi_2, \pi_3)$, is an $\boldsymbol{\varepsilon}$-hypersymplectic structure on a Lie algebroid $(A,\mu)$ if and only if  the triplet $(\mathcal{S}_1, \mathcal{S}_2, \mathcal{S}_3)$ is an $\boldsymbol{\varepsilon}$-hypersymplectic structure on the Courant algebroid $(A\oplus A^*,\mu)$, with $\mathcal{S}_i$, $i=1,2,3$, given by (\ref{definition_Si}).
\end{thm}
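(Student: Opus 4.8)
The plan is to verify the three defining conditions of Definition~\ref{def_epsilonHS_pre-Courant} for the triplet $(\mathcal{S}_1,\mathcal{S}_2,\mathcal{S}_3)$ on $(A\oplus A^*,\mu)$, translating each into the corresponding statement about $(\omega_1,\omega_2,\omega_3)$ on $(A,\mu)$, and then invoke Lemma~\ref{lem_algebraic_conditions} together with the characterization of hypersymplectic structures on Lie algebroids recalled at the start of Section~\ref{section_6}. The skew-symmetry of each $\mathcal{S}_i$ with respect to the canonical pairing (\ref{eq:canbin}) is built into the block form (\ref{definition_Si}) since $\omega_i^\flat$ is skew and $\pi_i^\sharp$ is skew; this needs only a one-line check.

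First I would treat the purely algebraic conditions i) and ii). Condition i), $\mathcal{S}_i^2=\varepsilon_i\,{\rm id}_{A\oplus A^*}$, is exactly Lemma~\ref{lem_algebraic_conditions} i), i.e.\ it is equivalent to $\pi_i\circ\omega_i={\rm id}_A$, which says that $\pi_i$ is the inverse of $\omega_i$ — part of the definition of an $\boldsymbol{\varepsilon}$-hypersymplectic structure on $(A,\mu)$, and conversely forces $(\pi_1,\pi_2,\pi_3)$ to be the inverse triplet. Condition ii), $\mathcal{S}_i\mathcal{S}_j=\varepsilon_1\varepsilon_2\varepsilon_3\,\mathcal{S}_j\mathcal{S}_i$ for $i\neq j$, is equivalent by Lemma~\ref{lem_algebraic_conditions} ii) to $N_i^2=\varepsilon_i\,{\rm id}_A$ for each $i$, which is precisely condition (\ref{transitions_A}). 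So conditions i) and ii) for the $\mathcal{S}_i$ hold if and only if $(\omega_1,\omega_2,\omega_3)$ (with its inverse $(\pi_1,\pi_2,\pi_3)$) satisfies the algebraic part of the definition on $A$.

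The remaining and more substantial step is condition iii), $\Theta_{\mathcal{S}_i,\mathcal{S}_i}=\varepsilon_i\Theta$ with $\Theta=\mu$, equivalently (by Remark~\ref{rem_equiv_cond_def_epsilonHS_pre-Courant} and Proposition~\ref{Si_Nijenhuis}) ${\text{\Fontlukas T}}_{\mu}\mathcal{S}_i=0$, i.e.\ each $\mathcal{S}_i$ is a Nijenhuis morphism on the Courant algebroid $(A\oplus A^*,\mu)$. Here I would compute in the supergeometric setting using $\mathcal{S}_i=\{\,\cdot\,,\omega_i+\varepsilon_i\pi_i\}$ and formula (\ref{supergeometric_torsion}): since $\mathcal{S}_i$ is skew-symmetric with $\mathcal{S}_i^2=\varepsilon_i\,{\rm id}$, we have ${\text{\Fontlukas T}}_\mu\mathcal{S}_i=\tfrac12(\mu_{\mathcal{S}_i,\mathcal{S}_i}-\varepsilon_i\mu)$, and $\mu_{\mathcal{S}_i,\mathcal{S}_i}=\{\omega_i+\varepsilon_i\pi_i,\{\omega_i+\varepsilon_i\pi_i,\mu\}\}$. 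Expanding by bidegree, the cross terms and the $\{\omega_i,\{\omega_i,\mu\}\}$, $\{\pi_i,\{\pi_i,\mu\}\}$ pieces must be matched against the known fact that the Nijenhuis condition for $N_i=\pi_{i-1}^\sharp\circ\omega_{i+1}^\flat$ on the Lie algebroid $(A,\mu)$ holds automatically for an $\boldsymbol\varepsilon$-hypersymplectic structure; alternatively, one recognizes that $\{\omega_i,\{\omega_i,\mu\}\}=0$ is equivalent to $\d_\mu\omega_i=0$ (each $\omega_i$ closed) and $\{\pi_i,\{\pi_i,\mu\}\}=0$ to $[\pi_i,\pi_i]_\mu=0$ ($\pi_i$ Poisson), while the mixed term $\{\omega_i,\{\pi_i,\mu\}\}+\{\pi_i,\{\omega_i,\mu\}\}$ reduces to a multiple of $\mu$ precisely when $\pi_i\circ\omega_i={\rm id}$. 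Thus ${\text{\Fontlukas T}}_\mu\mathcal{S}_i=0$ is equivalent to $\omega_i$ being symplectic with Poisson inverse $\pi_i$, which together with the algebraic conditions is the full definition of an $\boldsymbol\varepsilon$-hypersymplectic structure on $(A,\mu)$.

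The main obstacle I anticipate is the bookkeeping in condition iii): carefully expanding $\{\omega_i+\varepsilon_i\pi_i,\{\omega_i+\varepsilon_i\pi_i,\mu\}\}$ into its bidegree components and showing that it equals $\varepsilon_i\mu$ exactly under the hypotheses ``$\omega_i$ closed, $\pi_i$ Poisson, $\pi_i\circ\omega_i={\rm id}$'', and no more. A clean way to organize this is to note $\mu_{\mathcal{S}_i}=\{\omega_i+\varepsilon_i\pi_i,\mu\}$ has components of bidegrees $(0,3)$, $(1,2)$, $(2,1)$, $(3,0)$, and then peel off the relevant graded pieces of $\{\omega_i+\varepsilon_i\pi_i,\mu_{\mathcal{S}_i}\}$ using the bidegree of the big bracket. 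Since the Nijenhuis property of the $N_i$ on $(A,\mu)$ is already known (stated in Section~\ref{section_6}), one may shortcut part of the computation by transporting that identity through the block decomposition; I would use whichever of the two routes keeps the number of explicit big-bracket terms smallest, and cite \cite{AC13, A10} for the Lie-algebroid-level identities rather than rederiving them.
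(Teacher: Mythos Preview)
Your plan matches the paper's proof: Lemma~\ref{lem_algebraic_conditions} handles conditions i) and ii), and condition iii) is checked by expanding $\{\omega_i+\varepsilon_i\pi_i,\{\omega_i+\varepsilon_i\pi_i,\mu\}\}$ by bidegree. Two slips in your bidegree accounting should be corrected, though neither is fatal to the approach. First, $\{\omega_i,\{\omega_i,\mu\}\}$ lands in bidegree $(-1,4)$ and therefore vanishes identically; it is \emph{not} the closedness condition (that is $\{\mu,\omega_i\}=0$, a bidegree $(0,3)$ statement). Second, the mixed $(1,2)$ term does not reduce to $\mu$ from $\pi_i\circ\omega_i={\rm id}_A$ alone: a Jacobi manipulation leaves a residual $\{\pi_i,\d\omega_i\}$ contribution, so closedness of $\omega_i$ (equivalently, $\pi_i$ Poisson) is still needed there.

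The paper organizes this more economically. In the forward direction it uses $\{\omega_i,\mu\}=0$ and $\{\pi_i,\{\pi_i,\mu\}\}=0$ from the outset, so the whole expression collapses to $\varepsilon_i\{\omega_i,\{\pi_i,\mu\}\}$, and then formula~(\ref{identity}) gives $\varepsilon_i\mu$. In the converse it extracts only the bidegree $(3,0)$ component $\{\pi_i,\{\pi_i,\mu\}\}=0$, which says $\pi_i$ is Poisson and hence (by invertibility) $\omega_i$ is symplectic; the $(1,2)$ equation is then automatic and need not be analyzed separately. Your proposed detour through the Nijenhuis property of the $N_i$ is unnecessary here and the paper does not invoke it.
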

\begin{proof}
Suppose that $(\omega_1, \omega_2, \omega_3)$ is an $\boldsymbol{\varepsilon}$-hypersymplectic structure on a Lie algebroid $(A,\mu)$ and $\pi_i$ is the inverse of $\omega_i$, $i=1, 2, 3$.  According to Definition~\ref{def_epsilonHS_pre-Courant} and Lemma~\ref{lem_algebraic_conditions}, we only have to check that $\mu_{\mathcal{S}_i,\mathcal{S}_i}=\varepsilon_i \mu$, for $i=1,2,3$.
Recalling that $\pi$ is a Poisson bivector if and only if $\{ \pi,\{\pi, \mu \} \}=0$ and $\omega$ is a closed $2$-form if and only if $\{ \mu, \omega \}=0$, a simple computation gives:
\begin{eqnarray*}
\{\mathcal{S}_i, \{ \mathcal{S}_i, \mu \} \}&=& \{\omega_i + \varepsilon_i \pi_i, \{\omega_i + \varepsilon_i \pi_i, \mu \} \}= \varepsilon_i \{\omega_i, \{\pi_i, \mu \} \}\\
&=& \varepsilon_i \, \mu,
\end{eqnarray*}
where we used, in the last equality, the formula
\begin{equation} \label{identity}
\{{\rm{id}}_A, \chi\}=(q-p)\chi, \quad \chi \in {\mathcal F}_{A \oplus A^*}^{(p,q)}.
\end{equation}

Conversely, assume that the endomorphisms $\mathcal{S}_i=\left[
                    \begin{array}{ccc}
                    0&\ &\varepsilon_i\,\pi_{i}\\
                    \omega_{i}&\ &0
                    \end{array}
\right]$, $i=1,2,3$, form an $\boldsymbol{\varepsilon}$-hypersymplectic structure on the Courant algebroid $(A\oplus A^*,\mu)$. Using again Lemma~\ref{lem_algebraic_conditions}, we only have to prove that the non-degenerate $2$-forms $\omega_i$ are symplectic.  From
$$\{\omega_i + \varepsilon_i \pi_i, \{\omega_i + \varepsilon_i \pi_i, \mu \} \}=\varepsilon_i \mu,$$
we get $\{ \pi_i,\{\pi_i, \mu \} \}=0$, which means that $\pi_i$ is a Poisson bivector on $(A, \mu)$. But $\pi_i$ being a Poisson bivector on $(A, \mu)$ is equivalent to $\omega_i$ being a symplectic form on $(A, \mu)$.
\end{proof}

Under the conditions of Theorem~\ref{structure_A+A*_mu}, the transition morphisms of the  $\boldsymbol{\varepsilon}$-hypersymplectic structure $(\mathcal{S}_1, \mathcal{S}_2, \mathcal{S}_3)$ on $(A\oplus A^*,\mu)$, defined by  (\ref{def_transition}), are given by
\begin{equation*} \label{def_T_i}
\mathcal{T}_i=\left[
                    \begin{array}{ccc}
                        N_i&\ &0\\
                        0&\ &\varepsilon_1\varepsilon_2\varepsilon_3\, {N_i}^*
                    \end{array}
                \right], \quad i=1,2,3,
\end{equation*}
where $N_i$ is the transition morphism of the $\boldsymbol{\varepsilon}$-hypersymplectic structure $(\omega_1, \omega_2, \omega_3)$ on the Lie algebroid $(A,\mu)$, see (\ref{transition_A}). The endomorphism $\mathcal{G}:A\oplus A^* \to A\oplus A^*$  defined by
 (\ref{def_mathcalG})
is given by $$\mathcal{G}=\left[
                    \begin{array}{ccc}
                        0&\ &(g^\flat)^{-1}\\
                        g^\flat&\ &0
                    \end{array}
                \right],$$
                where $g^\flat: A \to A^*$ is defined by (\ref{first_defn _g}).

\section{Hypersymplectic structures on Lie bialgebroids}
\label{section_7}

In this section we present a class of examples of $\boldsymbol{\varepsilon}$-hypersymplectic structures on a Courant algebroid $(A\oplus A^*,\mu + \gamma)$, which is the double of a Lie bialgebroid $((A,A^*), \mu,\gamma)$.

Having in mind that a bivector $\pi$ on $A$ can be seen as a $2$-form on $A^*$, through the identification $A=(A^*)^*$, we have the following result.

\begin{thm} \label{structure_A+A*_mu+gamma}
Let $((A,A^*), \mu,\gamma)$ be a Lie bialgebroid and $(\mathcal{S}_1, \mathcal{S}_2, \mathcal{S}_3)$ be a triplet of bundle endomorphisms of $A \oplus A^*$, with $\mathcal{S}_i$ given by (\ref{definition_Si}). The following assertions are equivalent:
\begin{enumerate}
  \item $(\mathcal{S}_1, \mathcal{S}_2, \mathcal{S}_3)$ is an $\boldsymbol{\varepsilon}$-hypersymplectic structure on the Courant algebroid $(A\oplus A^*,\mu + \gamma)$
  \item $(\omega_1, \omega_2, \omega_3)$ is an $\boldsymbol{\varepsilon}$-hypersymplectic structure on the Lie algebroid $(A,\mu)$, $(\pi_1, \pi_2, \pi_3)$ is an $\boldsymbol{\varepsilon}$-hypersymplectic structure on the Lie algebroid $(A^*, \gamma)$ and $\pi_i$ is the inverse of $\omega_i$, $i=1, 2, 3$.
\end{enumerate}
\end{thm}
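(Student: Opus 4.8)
The plan is to reduce everything to the three defining conditions of Definition~\ref{def_epsilonHS_pre-Courant} applied to the Courant structure $\Theta = \mu + \gamma$, handling the purely algebraic conditions (i) and (ii) first and the supergeometric condition (iii) last. For conditions (i) and (ii), Lemma~\ref{lem_algebraic_conditions} does essentially all the work: condition (i), $\mathcal{S}_i^2 = \varepsilon_i \,\mathrm{id}_{A\oplus A^*}$, is equivalent to $\pi_i \circ \omega_i = \mathrm{id}_A$, i.e. to $\pi_i$ being the inverse of $\omega_i$; and condition (ii), $\mathcal{S}_{i-1}\mathcal{S}_{i+1} = \varepsilon_1\varepsilon_2\varepsilon_3\,\mathcal{S}_{i+1}\mathcal{S}_{i-1}$, is equivalent to $N_i^2 = \varepsilon_i\,\mathrm{id}_A$, where $N_i = \pi_{i-1}^\sharp \circ \omega_{i+1}^\flat$. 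Note that $N_i^2 = \varepsilon_i\,\mathrm{id}_A$ is also, via the identification $A = (A^*)^*$, exactly the transition condition \eqref{transitions_A} for the triplet $(\pi_1,\pi_2,\pi_3)$ viewed as $2$-forms on $A^*$; this is the symmetric reading that makes conditions (i) and (ii) simultaneously encode the ``algebraic part'' of an $\boldsymbol{\varepsilon}$-hypersymplectic structure on both $(A,\mu)$ and $(A^*,\gamma)$.

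The substantive step is condition (iii): $\Theta_{\mathcal{S}_i,\mathcal{S}_i} = \{\mathcal{S}_i,\{\mathcal{S}_i,\Theta\}\} = \varepsilon_i \Theta$ with $\Theta = \mu+\gamma$. Here I would use the supergeometric description $\mathcal{S}_i(X+\alpha) = \{X+\alpha,\, \omega_i + \varepsilon_i\pi_i\}$, so that deforming by $\mathcal{S}_i$ amounts to taking brackets with the function $\omega_i + \varepsilon_i\pi_i \in \mathcal{F}^2_{A\oplus A^*}$. Expanding,
\begin{align*}
\{\mathcal{S}_i,\{\mathcal{S}_i,\mu+\gamma\}\}
&= \{\omega_i+\varepsilon_i\pi_i,\{\omega_i+\varepsilon_i\pi_i,\mu+\gamma\}\}.
\end{align*}
Using bidegree bookkeeping ($\omega_i \in \mathcal{F}^{0,2}$, $\pi_i \in \mathcal{F}^{2,0}$, $\mu \in \mathcal{F}^{1,2}$, $\gamma \in \mathcal{F}^{2,1}$, and $\{\cdot,\cdot\}$ has bidegree $(-1,-1)$), most of the nine cross-terms vanish for degree reasons; the survivors are $\varepsilon_i\{\omega_i,\{\pi_i,\mu\}\}$, $\varepsilon_i\{\pi_i,\{\omega_i,\gamma\}\}$, and a mixed leftover involving $\{\omega_i,\{\omega_i,\gamma\}\}$ and $\{\pi_i,\{\pi_i,\mu\}\}$. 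One then recognizes (exactly as in the proof of Theorem~\ref{structure_A+A*_mu}, via the identity \eqref{identity}) that $\{\omega_i,\{\pi_i,\mu\}\} = \mu$ when $\pi_i = \omega_i^{-1}$ and $\omega_i$ is $\mu$-closed, and symmetrically $\{\pi_i,\{\omega_i,\gamma\}\} = \gamma$ when $\omega_i = \pi_i^{-1}$ and $\pi_i$ is $\gamma$-closed (i.e. $\pi_i$ ``closed'' as a $2$-form on $(A^*,\gamma)$, equivalently $\{\gamma,\pi_i\}=0$). The remaining terms $\{\omega_i,\{\omega_i,\gamma\}\}$ and $\{\pi_i,\{\pi_i,\mu\}\}$ must be shown to vanish; the first is a degree-$(0,4)$ object built from $\omega_i$ and $\gamma$, the second a degree-$(4,0)$ object — and here is where I expect to use that $\pi_i$ is $\mu$-Poisson ($\{\pi_i,\{\pi_i,\mu\}\}=0$, which is equivalent to $\omega_i$ being symplectic on $(A,\mu)$) together with the dual statement that $\omega_i$ is ``co-Poisson'' for $\gamma$.

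Putting the three conditions together: (i)+(ii) give the inverse relation $\pi_i = \omega_i^{-1}$ and the transition squares $N_i^2 = \varepsilon_i\,\mathrm{id}$, which by Theorem~\ref{structure_A+A*_mu}-type reasoning amount to the algebraic data of $\boldsymbol{\varepsilon}$-hypersymplectic structures on both sides; (iii) then splits, after the degree analysis, into the assertion that $\omega_i$ is symplectic on $(A,\mu)$ \emph{and} $\pi_i$ is Poisson (hence $\pi_i$ symplectic, as a $2$-form on $A^*$) on $(A^*,\gamma)$, which is precisely the closedness/non-degeneracy content of $(\omega_1,\omega_2,\omega_3)$ being $\boldsymbol{\varepsilon}$-hypersymplectic on $(A,\mu)$ and $(\pi_1,\pi_2,\pi_3)$ being $\boldsymbol{\varepsilon}$-hypersymplectic on $(A^*,\gamma)$. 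The converse direction runs the same equivalences backwards. The main obstacle I anticipate is not any single implication but the clean separation of the nine-term expansion of $\{\mathcal{S}_i,\{\mathcal{S}_i,\mu+\gamma\}\}$ into the pieces that reproduce $\varepsilon_i(\mu+\gamma)$ versus the pieces that must vanish, and verifying that the vanishing pieces are controlled exactly by the Lie-bialgebroid compatibility $\{\mu,\gamma\}=0$ together with the Poisson conditions on the $\pi_i$ — in other words, checking that no ``extra'' obstruction survives beyond what the hypotheses on $\omega_i$ and $\pi_i$ already guarantee. I would organize this computation by bidegree and, where possible, cite the corresponding identities already established for the $\mu$-only case in Theorem~\ref{structure_A+A*_mu} and their $\gamma$-duals.
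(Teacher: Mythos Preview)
Your plan is the paper's plan: Lemma~\ref{lem_algebraic_conditions} for conditions (i)--(ii), then a bidegree expansion of $\{\mathcal{S}_i,\{\mathcal{S}_i,\mu+\gamma\}\}$ for condition (iii). The overall structure is correct, but your sketch of the expansion contains bookkeeping errors that you should fix before writing it up.

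First, the expansion has $2\times 2\times 2=8$ terms, not nine, and only \emph{two} of them vanish for bidegree reasons: $\{\omega_i,\{\omega_i,\mu\}\}$ lands in bidegree $(-1,4)$ and $\{\pi_i,\{\pi_i,\gamma\}\}$ in $(4,-1)$. Six terms survive, not four. You have omitted $\varepsilon_i\{\pi_i,\{\omega_i,\mu\}\}$ (bidegree $(1,2)$) and $\varepsilon_i\{\omega_i,\{\pi_i,\gamma\}\}$ (bidegree $(2,1)$); each pairs with one of the terms you did list. Matching bidegrees against $\varepsilon_i(\mu+\gamma)$ therefore yields \emph{four} equations, exactly the system displayed in the paper as~(\ref{mu+gamma}): the extremes $\{\pi_i,\{\pi_i,\mu\}\}=0$ and $\{\omega_i,\{\omega_i,\gamma\}\}=0$, plus the two middle equations $\{\omega_i,\{\pi_i,\mu\}\}+\{\pi_i,\{\omega_i,\mu\}\}=\mu$ and its $\gamma$-dual.

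Second, the Lie-bialgebroid compatibility $\{\mu,\gamma\}=0$ plays no role whatsoever in verifying condition (iii); it is needed only so that $\mu+\gamma$ is a Courant structure to begin with. The obstacle you anticipate does not arise.

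Third, the paper adds one observation you do not mention: the two ``middle'' equations are not independent conditions but are \emph{equivalent} to the two ``extreme'' ones (given $\pi_i\omega_i=\mathrm{id}$). This is the step analogous to the use of~(\ref{identity}) in Theorem~\ref{structure_A+A*_mu}, and it is what reduces condition (iii) cleanly to ``$\pi_i$ Poisson on $(A,\mu)$ and $\omega_i$ Poisson on $(A^*,\gamma)$''. Your version, which invokes $\mu$-closedness of $\omega_i$ and $\gamma$-closedness of $\pi_i$ as separate inputs alongside the Poisson conditions, is not wrong (these are equivalent once the inverses are in place) but obscures the fact that only two conditions per index are actually in play.
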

\begin{proof}
We use Lemma~\ref{lem_algebraic_conditions} noticing that $\pi_i \circ \omega_i={\rm{id}}_{A} \Leftrightarrow \omega_i \circ \pi_i={\rm{id}}_{A^*}$ and
  $N_i ^2= \varepsilon_i {\rm{id}}_{A} \Leftrightarrow (N_i^*)^2= \varepsilon_i {\rm{id}}_{A^*}$, $i=1, 2, 3$, so that conditions i) and ii) of Definition~\ref{def_epsilonHS_pre-Courant} are satisfied if and only if $\pi_i$ and $\omega_i$ are inverses of each other and (\ref{transitions_A}) holds. Moreover, using the bidegrees of $\mathcal{F}_{A \oplus A^*}^{3}$, we have
\begin{equation}  \label{mu+gamma}
\{\mathcal{S}_i, \{\mathcal{S}_i, \mu + \gamma \}\}= \varepsilon_i (\mu +\gamma) \Leftrightarrow
\begin{cases}
\{\pi_i, \{\pi_i, \mu \}\}=0\\
\{\omega_i, \{\omega_i, \gamma \}\}=0\\
\{\omega_i, \{\pi_i, \mu \}\}+ \{\pi_i, \{\omega_i, \mu \}\}=\mu\\
\{\omega_i, \{\pi_i, \gamma \}\}+ \{\pi_i, \{\omega_i, \gamma \}\}=\gamma.
\end{cases}
\end{equation}
The first equation on the right-hand side of (\ref{mu+gamma}) means that $\pi_i$ is a Poisson bivector on $(A,\mu)$, which is equivalent to $\omega_i$ being a symplectic form on $(A,\mu)$.
The second equation on the right-hand side of (\ref{mu+gamma}) means that $\omega_i$, seen as a bivector on $A^*$, is  Poisson on the Lie algebroid $(A^*, \gamma)$, which is equivalent to saying that $\pi_i$ is symplectic on $(A^*, \gamma)$. Concerning the third and fourth equations on the right-hand side of (\ref{mu+gamma}),
an easy computation shows that they are equivalent, respectively, to the first and second equations.
\end{proof}

It is well known that a Poisson bivector $\pi$ on $(A,\mu)$ determines a Lie algebroid structure on $A^*$; we denote by $\mu_{\pi}$ this induced structure.
In \cite{AC14} we proved that if  $(\omega_1, \omega_2, \omega_3)$ is an $\boldsymbol{\varepsilon}$-hypersymplectic structure on a Lie algebroid $(A,\mu)$ and
$\pi_i$ is the inverse of $\omega_i$, $i=1, 2, 3$, then the triplet $(\pi_1, \pi_2, \pi_3)$ is an $\boldsymbol{\varepsilon}$-hypersymplectic structure on the Lie algebroid $(A^*, \mu_{\pi_{i}})$.
So, given an $\boldsymbol{\varepsilon}$-hypersymplectic structure $(\omega_1, \omega_2, \omega_3)$ on a Lie algebroid $(A,\mu)$, Theorem~\ref{structure_A+A*_mu+gamma} yields that the triplet $(\mathcal{S}_1, \mathcal{S}_2, \mathcal{S}_3)$ is an $\boldsymbol{\varepsilon}$-hypersymplectic structure on the Courant algebroid $(A\oplus A^*,\mu +\mu_{\pi_{i}})$.
Conversely, if
$(\mathcal{S}_1, \mathcal{S}_2, \mathcal{S}_3)$ is an $\boldsymbol{\varepsilon}$-hypersymplectic structure on the Courant algebroid $(A\oplus A^*,\mu +\mu_{\pi_{i}})$ then, by Theorem~\ref{structure_A+A*_mu+gamma}, $(\omega_1, \omega_2, \omega_3)$ is an $\boldsymbol{\varepsilon}$-hypersymplectic structure on the Lie algebroid $(A,\mu)$.

Thus, we have proved:

\begin{cor}\label{cor_structure_A+A*_mu+mu_pi}
The triplet $(\omega_1, \omega_2, \omega_3)$, with inverse $(\pi_1, \pi_2, \pi_3)$, is an $\boldsymbol{\varepsilon}$-hypersymplectic structure on the Lie algebroid $(A,\mu)$ if and only if $(\mathcal{S}_1, \mathcal{S}_2, \mathcal{S}_3)$ is an $\boldsymbol{\varepsilon}$-hypersymplectic structure on the Courant algebroid $(A\oplus A^*,\mu +\mu_{\pi_{i}})$, with $\mathcal{S}_i$ given by (\ref{definition_Si}),
$i=1, 2, 3$.
\end{cor}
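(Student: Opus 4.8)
The plan is to obtain Corollary~\ref{cor_structure_A+A*_mu+mu_pi} as an immediate specialization of Theorem~\ref{structure_A+A*_mu+gamma}, combined with the result recalled above from \cite{AC14}. The one point to be set up is that, for a fixed index $i$, the hypothesis that $\pi_i$ is a Poisson bivector on $(A,\mu)$ makes $\mu_{\pi_i}$ a Lie algebroid structure on $A^*$ and turns $((A,A^*),\mu,\mu_{\pi_i})$ into a triangular --- hence genuine --- Lie bialgebroid, so that $(A\oplus A^*,\mu+\mu_{\pi_i})$ is a Courant algebroid and Theorem~\ref{structure_A+A*_mu+gamma} can be invoked with $\gamma=\mu_{\pi_i}$.

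For the forward implication, I would assume $(\omega_1,\omega_2,\omega_3)$, with inverse $(\pi_1,\pi_2,\pi_3)$, is an $\boldsymbol{\varepsilon}$-hypersymplectic structure on $(A,\mu)$. Then each $\pi_i$ is Poisson on $(A,\mu)$, so $((A,A^*),\mu,\mu_{\pi_i})$ is a Lie bialgebroid; moreover, by the result of \cite{AC14} recalled above, $(\pi_1,\pi_2,\pi_3)$ is an $\boldsymbol{\varepsilon}$-hypersymplectic structure on $(A^*,\mu_{\pi_i})$, and $\pi_j$ is by hypothesis the inverse of $\omega_j$ for $j=1,2,3$. Thus assertion (ii) of Theorem~\ref{structure_A+A*_mu+gamma}, taken with $\gamma=\mu_{\pi_i}$, holds in full, and the implication (ii)$\Rightarrow$(i) of that theorem gives that $(\mathcal{S}_1,\mathcal{S}_2,\mathcal{S}_3)$ is an $\boldsymbol{\varepsilon}$-hypersymplectic structure on the Courant algebroid $(A\oplus A^*,\mu+\mu_{\pi_i})$.

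For the converse, I would assume $\pi_i$ is Poisson on $(A,\mu)$ --- which is what makes the statement meaningful --- and that $(\mathcal{S}_1,\mathcal{S}_2,\mathcal{S}_3)$ is an $\boldsymbol{\varepsilon}$-hypersymplectic structure on $(A\oplus A^*,\mu+\mu_{\pi_i})$. Since $((A,A^*),\mu,\mu_{\pi_i})$ is then a Lie bialgebroid, the implication (i)$\Rightarrow$(ii) of Theorem~\ref{structure_A+A*_mu+gamma} applies and yields, in particular, that $(\omega_1,\omega_2,\omega_3)$ is an $\boldsymbol{\varepsilon}$-hypersymplectic structure on $(A,\mu)$, which is the desired conclusion.

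I do not anticipate any genuine obstacle here: all the substantive content sits in Theorem~\ref{structure_A+A*_mu+gamma} and in the cited fact describing $\mu_{\pi_i}$. The only care needed is in the bookkeeping --- making explicit that $\mu_{\pi_i}$ is well defined in both directions (from the Poisson property of $\pi_i$, which is part of the Lie-algebroid hypersymplectic hypothesis in one direction and is presupposed by the Courant structure in the other) and observing that the equivalence holds for each admissible choice of the index $i$, the corresponding Courant structures $\mu+\mu_{\pi_i}$ being in general distinct.
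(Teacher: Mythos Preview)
Your proposal is correct and follows essentially the same route as the paper: both directions are obtained by applying Theorem~\ref{structure_A+A*_mu+gamma} with $\gamma=\mu_{\pi_i}$, using the result from \cite{AC14} for the forward implication. Your bookkeeping about why $\mu_{\pi_i}$ is well defined and why $((A,A^*),\mu,\mu_{\pi_i})$ is a Lie bialgebroid is slightly more explicit than the paper's, but the argument is the same.
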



\section{Hypersymplectic structures with torsion on Lie algebroids}
\label{section_8}

In this section we pretend to study a class of examples of hypersymplectic structures on pre-Courant algebroids determined by some structures on Lie algebroids which are called {\em hypersymplectic with torsion}. These are introduced and discussed in \cite{AC14_a} and may be considered as being equivalent to hyperk\"{a}hler structures with torsion, also known as HKT structures \cite{HP96}.
The hypersymplectic structures with torsion on  Lie algebroids provide examples of hypersymplectic structures (without torsion) on Courant algebroids which are doubles of quasi-Lie bialgebroids and even in the more general case where the Courant structure is the double of a proto-Lie bialgebroid.

We give the definition of a hypersymplectic structure with torsion on a Lie algebroid $(A,\mu)$, which is a particular case of an $\boldsymbol{\varepsilon}$-hypersymplectic structure with torsion considered in \cite{AC14_a}.

\

Let $\omega_1, \omega_2$ and $\omega_3$ be nondegenerate $2$-forms on a Lie algebroid $(A,\mu)$,  with inverses  $\pi_1, \pi_2$ and $\pi_3 \in \Gamma(\wedge^2 A)$, respectively, and consider the transition morphisms $N_1,N_2,N_3: A \to A$ given by (\ref{transition_A}).
\begin{defn}\label{def_HST}
    The triplet $(\omega_1, \omega_2, \omega_3)$ is a {\em hypersymplectic structure with torsion} on the Lie algebroid $(A, \mu)$ if
    \begin{equation}\label{eq_Ij_almostCPS}
        {N_i}^2=- {\rm id}_{A}, \quad i=1,2,3, \quad {\rm and} \quad  N_1 {\rm d} \omega_1= N_2 {\rm d} \omega_2= N_3 {\rm d} \omega_3,
    \end{equation}
    where $N_i\d \omega_i(X,Y,Z)= \d \omega_i(N_i X, N_i Y, N_i Z)$, for all $X,Y,Z \in \Gamma(A)$ and $\d$ stands for the differential of the Lie algebroid $(A,\mu)$.
\end{defn}

When the non-degenerate $2$-forms $\omega_1, \omega_2$ and $\omega_3$ are closed, then they are symplectic forms and the right hand side of (\ref{eq_Ij_almostCPS}) is trivially satisfied. In this case, the triplet $(\omega_1, \omega_2, \omega_3)$ is a \emph{hypersymplectic structure} (without torsion) on $(A, \mu)$, that is, an $\boldsymbol{\varepsilon}$-hypersymplectic structure with $\varepsilon_1=\varepsilon_2=\varepsilon_3=-1$ (see Section \ref{section_6}).

The next lemma will be useful in what follows.

\begin{lem}  \label{4equivalent2}
Let $((A, A^*), \mu, \gamma)$ be a Lie bialgebroid, $\psi \in \Gamma(\wedge^3 A)$, $\phi \in \Gamma(\wedge^3 A^*)$, $\pi \in \Gamma(\wedge^2A)$ and $\omega \in \Gamma(\wedge^2 A^*)$, with $\pi$ and $\omega$ inverse of each other. Then,
\begin{enumerate}
\item
$\{\pi, \{\pi, \mu \}\}=2\, \psi \, \Leftrightarrow \, 2 \, \{\pi, \{\omega, \mu \}\}= \{\omega, \{\omega, \psi \}\}$;
\item
$\{\omega, \{\omega, \gamma \}\}=2\, \phi \, \Leftrightarrow \, 2 \, \{\omega, \{\pi, \gamma \}\}= \{\pi, \{\pi, \phi \}\}$.
\end{enumerate}
\end{lem}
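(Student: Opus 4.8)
The plan is as follows. I would first observe that statement (ii) is the image of statement (i) under the Poisson-algebra involution of $\mathcal{F}_{A\oplus A^*}$ induced by the canonical identification $A\oplus A^*\cong A^*\oplus A$: this involution interchanges the two entries of the bidegree (so it interchanges $\mu$ with $\gamma$, $\pi$ with $\omega$, and $\psi$ with $\phi$) and sends a mutually inverse pair $(\pi,\omega)$ to such a pair again. So it is enough to prove (i); and, in fact, neither the Lie bialgebroid axioms nor $\{\mu,\mu\}=0$ will enter, since the identities below hold formally for arbitrary $\mu\in\mathcal{F}_{A\oplus A^*}^{1,2}$ and $\psi\in\mathcal{F}_{A\oplus A^*}^{3,0}$.

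Write ${\rm id}_A\in\mathcal{F}_{A\oplus A^*}^{1,1}$ for the function appearing in (\ref{identity}), so that $\{{\rm id}_A,\chi\}=(l-k)\,\chi$ for $\chi\in\mathcal{F}_{A\oplus A^*}^{k,l}$. Since $\pi$ and $\omega$ are inverse of each other, a short computation in local coordinates gives $\{\pi,\omega\}=-{\rm id}_A$. As $\pi$ and $\omega$ have even total degree $2$, the graded Jacobi identity carries no sign, so it produces the \emph{commutation rule}
\begin{equation}\label{eq:commrule_plan}
\{\pi,\{\omega,\chi\}\}-\{\omega,\{\pi,\chi\}\}=\{\{\pi,\omega\},\chi\}=(k-l)\,\chi,\qquad \chi\in\mathcal{F}_{A\oplus A^*}^{k,l}.
\end{equation}
I would also record the bidegree vanishings $\{\omega,\{\omega,\mu\}\}=0$ (bidegree $(-1,4)$) and $\{\pi,\{\pi,\{\pi,\mu\}\}\}=\{\pi,\psi\}=0$ (bidegree $(4,-1)$).

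The heart of the proof is the following three identities:
\begin{align*}
\{\omega,\{\omega,\{\pi,\{\pi,\mu\}\}\}\}&=4\,\{\pi,\{\omega,\mu\}\},\\
\{\pi,\{\pi,\{\pi,\{\omega,\mu\}\}\}\}&=3\,\{\pi,\{\pi,\mu\}\},\\
\{\pi,\{\pi,\{\omega,\{\omega,\psi\}\}\}\}&=12\,\psi.
\end{align*}
Each one I would prove by peeling the nested brackets one at a time from the inside and using (\ref{eq:commrule_plan}) at every step, the resulting lower-order correction terms telescoping and cancelling. (For the first identity, say, (\ref{eq:commrule_plan}) first gives $\{\omega,\{\pi,\{\pi,\mu\}\}\}=\{\pi,\{\pi,\{\omega,\mu\}\}\}$, and two further applications of (\ref{eq:commrule_plan}), together with $\{\omega,\{\omega,\mu\}\}=0$, yield the stated value; the other two are entirely analogous.) To conclude, put $\alpha:=\{\pi,\{\pi,\mu\}\}-2\psi\in\mathcal{F}_{A\oplus A^*}^{3,0}$ and $\beta:=2\{\pi,\{\omega,\mu\}\}-\{\omega,\{\omega,\psi\}\}\in\mathcal{F}_{A\oplus A^*}^{1,2}$. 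The first identity yields $\{\omega,\{\omega,\alpha\}\}=2\beta$, while the second and third together yield $\{\pi,\{\pi,\beta\}\}=6\alpha$; hence $\alpha=0$ if and only if $\beta=0$, which is precisely statement (i).

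The step I expect to be the main obstacle is the sign and constant bookkeeping inside the three displayed identities: this is exactly what forces the normalization $\{\pi,\omega\}=-{\rm id}_A$ (a sign which is in any case dictated by the equivalence we are out to prove), and a single misplaced sign would wreck the telescoping. Beyond that, the argument is routine manipulation with (\ref{eq:commrule_plan}) and the bidegree.
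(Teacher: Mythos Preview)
Your proposal is correct. The underlying machinery is the same as in the paper—Jacobi identity for the big bracket together with the weight formula~(\ref{identity}), encoded in your commutation rule—but the packaging differs. The paper treats the two implications of (i) separately: it starts from the assumed equation, applies $\{\omega,\cdot\}$ (resp.\ $\{\pi,\cdot\}$) twice, and simplifies step by step. You instead isolate three universal identities, valid without any hypothesis on $\psi$, and then read off both implications at once from $\{\omega,\{\omega,\alpha\}\}=2\beta$ and $\{\pi,\{\pi,\beta\}\}=6\alpha$. This is a cleaner organization and makes transparent two points the paper leaves implicit: that (ii) follows from (i) by the $A\leftrightarrow A^*$ symmetry, and that neither $\{\mu,\mu\}=0$ nor the bialgebroid compatibility is actually needed.
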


\begin{proof}
i) Let us assume that $\{\pi, \{\pi, \mu \}\}=2\, \psi$. Then,
\begin{equation*}
\{\omega,\{\pi, \{\pi, \mu \}\}\}=2\, \{\omega, \psi \}
\end{equation*}
and the Jacobi identity together with (\ref{identity}) gives
\begin{equation*}
\{\pi,\{\pi, \{\omega, \mu \}\}\}=2\, \{\omega, \psi \}.
\end{equation*}
Thus,
\begin{equation*}
\{\omega, \{\pi,\{\pi, \{\omega, \mu \}\}\}\}=2\, \{\omega,\{\omega, \psi \}\}
\end{equation*}
or, equivalently,
\begin{equation}  \label{torsion_quasi_Lie}
\{\pi, \{\omega, \mu \}\} + \{\pi, \{\omega,\{\pi, \{\omega, \mu \}\}\}\}=2\, \{\omega,\{\omega, \psi \}\}.
\end{equation}
Finally, (\ref{torsion_quasi_Lie}) gives
\begin{equation*}
2 \{\pi, \{\omega, \mu \}\} = \{\omega,\{\omega, \psi \}\}.
\end{equation*}

Now, we assume that $\{\omega, \{\omega, \psi \}\}=2\, \{\pi, \{\omega, \mu \}\}$. Then,
\begin{equation*}
\{\pi, \{\omega, \{\omega, \psi \}\} \}=2\, \{\pi, \{\pi, \{\omega, \mu \}\}\}
\end{equation*}
which is equivalent to
\begin{equation*}
\{\omega, \psi \} + \{\omega, \{\pi, \{\omega, \psi \}\} \}=2\, \{\pi, \{\pi, \{\omega, \mu \}\}\}.
\end{equation*}
Thus,
\begin{equation} \label{torsion_quasi_Lie2}
\{\pi, \{\omega, \psi \}\} + \{ \pi, \{\omega, \{\pi, \{\omega, \psi \}\} \}\}=2 \, \{\pi, \{\pi, \{\pi, \{\omega, \mu \}\}\}\}.
\end{equation}
From (\ref{torsion_quasi_Lie2}) we get, applying the Jacobi identity and (\ref{identity}) several times,
\begin{eqnarray*}
&3 \psi + 3 \{ \pi, \{ \omega, \psi \}\}= - 2 \, \{\pi, \{\pi,\mu\}\} + 2\, \{\pi, \{\pi,\{\omega, \{\pi, \mu \}\}\}\}\\
&\Leftrightarrow \,
6 \psi = \{\pi,\{\omega, \{ \pi, \{\pi, \mu \}\}\}\}
\Leftrightarrow \,
2 \psi = \{ \pi, \{\pi, \mu \}\}.
\end{eqnarray*}

\

\noindent ii) The proof is similar to case i).

\end{proof}

Now, we have to mention that the definition of hypersymplectic structure with torsion on a Lie algebroid can be given using the inverses of the non-degenerate $2$-forms $\omega_i$. More precisely,
\begin{center}
\emph{$(\omega_1, \omega_2, \omega_3)$ is a hypersymplectic structure with torsion on $(A,\mu)$ if and only if} \begin{equation}\label{contravariant_def}
N_i^2=- \rm{id}_A \quad \mbox{\it and} \quad [\pi_1, \pi_1]=[\pi_2, \pi_2]=[\pi_3, \pi_3], \end{equation}
\end{center}where $[.,.]$ is the Schouten-Nijenhuis bracket of multivectors on $A$.  The equivalence of the two definitions is proved in \cite{AC14_a}.

The next proposition gives a first example of a hypersymplectic structure on a pre-Courant algebroid, which is constructed out of a hypersymplectic structure with torsion on a Lie algebroid.

\begin{prop}  \label{HS_preCourant_structure_mu+gamma+psi}
Let $(\omega_1, \omega_2, \omega_3)$ be a triplet of $2$-forms and $(\pi_1,\pi_2, \pi_3)$ be a triplet  of bivectors on a Lie algebroid $(A,\mu)$. Consider the triplet $(\mathcal{S}_1, \mathcal{S}_2, \mathcal{S}_3)$ of endomorphisms of $A \oplus A^*$, with $\mathcal{S}_i$ given by (\ref{definition_Si}). The following assertions are equivalent:
\begin{enumerate}
    \item $(\omega_1, \omega_2, \omega_3)$ is a hypersymplectic structure with torsion on the Lie algebroid $(A,\mu)$ and $\pi_i$ is the inverse of $\omega_i$, $i=1,2,3$;
    \item $(\mathcal{S}_1, \mathcal{S}_2, \mathcal{S}_3)$ is a hypersymplectic structure on the pre-Courant algebroid $(A\oplus A^*, \mu+\psi)$, for some $\psi\in \Gamma(\wedge^3 A)$.
\end{enumerate}
\end{prop}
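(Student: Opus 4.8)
The plan is to reduce everything to computations with the big bracket, exactly as in the proof of Theorem~\ref{structure_A+A*_mu}, and to feed the "torsion" condition $[\pi_1,\pi_1]=[\pi_2,\pi_2]=[\pi_3,\pi_3]$ through Lemma~\ref{4equivalent2}~i). The key remark is that $\Theta := \mu + \psi$ with $\psi \in \Gamma(\wedge^3 A) = \mathcal{F}_{A\oplus A^*}^{3,0}$ is automatically a \emph{pre}-Courant structure for \emph{any} $\psi$ (no integrability condition is imposed), so we are free to choose $\psi$ along the way; the natural choice, forced by condition iii) of Definition~\ref{def_epsilonHS_pre-Courant}, will be $\psi = \tfrac12[\pi_i,\pi_i] = \tfrac12\{\pi_i,\{\pi_i,\mu\}\}$ — well defined precisely because the three bivectors have equal Schouten square.

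First I would handle conditions i) and ii) of Definition~\ref{def_epsilonHS_pre-Courant}. These are purely algebraic (they do not see $\Theta$), so Lemma~\ref{lem_algebraic_conditions} applies verbatim: $\mathcal{S}_i^2 = \varepsilon_i\,\mathrm{id} \Leftrightarrow \pi_i\circ\omega_i = \mathrm{id}_A$ (i.e. $\pi_i$ is the inverse of $\omega_i$), and $\mathcal{S}_{i-1}\mathcal{S}_{i+1} = \varepsilon_1\varepsilon_2\varepsilon_3\,\mathcal{S}_{i+1}\mathcal{S}_{i-1} \Leftrightarrow N_i^2 = \varepsilon_i\,\mathrm{id}_A$; here $\varepsilon_1=\varepsilon_2=\varepsilon_3=-1$ since we are aiming at an honest (non-para) hypersymplectic structure, so this last condition is exactly $N_i^2 = -\mathrm{id}_A$, the left half of \eqref{contravariant_def}. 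Thus i) and ii) of the Definition hold $\Leftrightarrow$ the $\pi_i$ are the inverses of the $\omega_i$ and $N_i^2 = -\mathrm{id}_A$ for $i=1,2,3$.

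Next, the heart of the matter is condition iii), $\Theta_{\mathcal{S}_i,\mathcal{S}_i} = \varepsilon_i\Theta = -\Theta$, i.e. $\{\mathcal{S}_i,\{\mathcal{S}_i,\mu+\psi\}\} = -(\mu+\psi)$. Using $\mathcal{S}_i \leftrightarrow \omega_i + \varepsilon_i\pi_i = \omega_i - \pi_i$ in the supergeometric picture and expanding by bidegree (recall $\omega_i \in \mathcal{F}^{0,2}$, $\pi_i \in \mathcal{F}^{2,0}$, $\mu \in \mathcal{F}^{1,2}$, $\psi \in \mathcal{F}^{3,0}$), the bracket $\{\omega_i - \pi_i, \{\omega_i - \pi_i, \mu+\psi\}\}$ splits into pieces of distinct bidegree, and matching the $\mathcal{F}^{1,2}$-part against $-\mu$ and the $\mathcal{F}^{3,0}$-part against $-\psi$ gives, after using $\{\mathrm{id}_A,\chi\} = (q-p)\chi$ and $\{\mu,\omega_i\}=\d\omega_i$, a system equivalent to: $\{\omega_i,\{\pi_i,\mu\}\} + \{\pi_i,\{\omega_i,\mu\}\} = \mu$ (which by Lemma~\ref{4equivalent2}~i) is equivalent to $\{\pi_i,\{\pi_i,\mu\}\} = 2\psi$, i.e. $[\pi_i,\pi_i] = 2\psi$) together with a remaining equation that, again by Lemma~\ref{4equivalent2}~i), is equivalent to the same thing, and one more identity equivalent to $\{\omega_i,\{\omega_i,\psi\}\}$ being correctly placed — all of which collapse to the single requirement $[\pi_i,\pi_i] = 2\psi$ for each $i$. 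Hence, given the inverse relations and $N_i^2=-\mathrm{id}_A$, condition iii) holds for all $i$ with a common $\psi$ \emph{if and only if} $[\pi_1,\pi_1] = [\pi_2,\pi_2] = [\pi_3,\pi_3]$ (and then $\psi := \tfrac12[\pi_i,\pi_i]$). By the contravariant characterization \eqref{contravariant_def} of hypersymplectic structures with torsion, this is exactly assertion (i), and the equivalence is proved.

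The main obstacle I anticipate is bookkeeping: correctly carrying out the bidegree decomposition of the double big bracket $\{\omega_i - \pi_i,\{\omega_i - \pi_i,\mu+\psi\}\}$ and verifying that \emph{every} component equation reduces, via Lemma~\ref{4equivalent2}~i) and the Jacobi identity plus \eqref{identity}, to the single condition $[\pi_i,\pi_i]=2\psi$ — in particular checking that no spurious extra constraint on $\psi$ or on $\d\omega_i$ survives, and that the $\d\omega_i$-terms reorganize (using $N_i$, since $\omega_i^\flat = \omega_i^\flat$ and $N_i = \pi_{i-1}^\sharp\omega_{i+1}^\flat$) into the equal-torsion statement $N_1\d\omega_1 = N_2\d\omega_2 = N_3\d\omega_3$ of Definition~\ref{def_HST}, consistently with the two equivalent formulations. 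This is where I would spend most of the care; the rest is a direct application of Lemmas~\ref{lem_algebraic_conditions} and \ref{4equivalent2}.
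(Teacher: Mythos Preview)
Your approach is exactly the paper's: Lemma~\ref{lem_algebraic_conditions} handles conditions i) and ii) of Definition~\ref{def_epsilonHS_pre-Courant}, a bidegree expansion of $\{\mathcal{S}_i,\{\mathcal{S}_i,\mu+\psi\}\}=-\mu-\psi$ followed by Lemma~\ref{4equivalent2}~i) collapses condition iii) to the single equation $\{\pi_i,\{\pi_i,\mu\}\}=2\psi$, and the contravariant characterization~\eqref{contravariant_def} closes the equivalence. One bookkeeping correction for when you carry this out: the split yields exactly \emph{two} components, in $\mathcal{F}^{1,2}$ and $\mathcal{F}^{3,0}$, and the $\mathcal{F}^{1,2}$ equation already contains the $\{\omega_i,\{\omega_i,\psi\}\}$ term, reading $\{\omega_i,\{\pi_i,\mu\}\}+\{\pi_i,\{\omega_i,\mu\}\}-\{\omega_i,\{\omega_i,\psi\}\}=\mu$; the $\psi$-free relation you wrote is (up to a sign) the Jacobi/\eqref{identity} tautology one uses to reduce this to $2\{\pi_i,\{\omega_i,\mu\}\}=\{\omega_i,\{\omega_i,\psi\}\}$, which is the form to which Lemma~\ref{4equivalent2}~i) applies.
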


\begin{proof}
Let us assume that $(\omega_1, \omega_2, \omega_3)$ is a hypersymplectic structure with torsion on a Lie algebroid $(A,\mu)$.
From Lemma~\ref{lem_algebraic_conditions}, conditions i) and ii)  of Definition~\ref{def_epsilonHS_pre-Courant} are satisfied while for condition iii), we have
\begin{align}
    \{\mathcal{S}_i, \{\mathcal{S}_i , \mu+\psi \} \}= - \mu -\psi &\Leftrightarrow
    \begin{cases}
        \{ \omega_i, \{\pi_i, \mu \} \}+ \{ \pi_i, \{\omega_i, \mu \} \} - \{ \omega_i, \{\omega_i, \psi \} \} =\mu \\
        -\{\pi_i, \{\pi_i, \mu\}\} + \{ \pi_i, \{\omega_i, \psi \} \}=\psi
    \end{cases}\nonumber\\
    &\Leftrightarrow
    \begin{cases}
        2 \{ \pi_i, \{\omega_i, \mu \} \} = \{ \omega_i, \{\omega_i, \psi \} \} \\
        \{\pi_i, \{\pi_i, \mu\}\}=2 \psi
    \end{cases}\nonumber\\
    &\Leftrightarrow \{\pi_i, \{\pi_i, \mu\}\}=2 \psi,\label{equivalent_conditions}
\end{align}
where the latter equivalence is given by Lemma~\ref{4equivalent2}. Equation (\ref{equivalent_conditions}) exhibits the appropriate definition of $\psi$ in order to satisfy condition iii) of Definition~\ref{def_epsilonHS_pre-Courant}.

Let us now assume that $(\mathcal{S}_1, \mathcal{S}_2, \mathcal{S}_3)$ is a hypersymplectic structure on a pre-Courant algebroid $(A\oplus A^*, \mu+\psi)$, with $\psi\in \Gamma(\wedge^3 A)$. From (\ref{equivalent_conditions}), the $3$-vector $\psi$ is given by $\psi= - \frac{1}{2}[\pi_i, \pi_i]$, $i\in \{1,2,3\}$. Thus, $[\pi_1, \pi_1]=[\pi_2, \pi_2]=[\pi_3, \pi_3]$ and, from Lemma~\ref{lem_algebraic_conditions} and (\ref{contravariant_def}), we get that $(\omega_1, \omega_2, \omega_3)$ is a hypersymplectic structure with torsion on $(A,\mu)$.
\end{proof}

Notice that in the assertion ii) of Proposition~\ref{HS_preCourant_structure_mu+gamma+psi}, since $(\mathcal{S}_1, \mathcal{S}_2, \mathcal{S}_3)$ is a hypersymplectic structure for the pre-Courant structure $\mu + \psi$, condition $(\mu+\psi)_{\mathcal{S}_k, \mathcal{S}_k}=- \mu-\psi$ holds and implies that $\psi$ has to be of the form $-\frac{1}{2}[\pi_k, \pi_k]$, for any $k\in \{1,2,3\}$.

If we aim to have a Courant structure on $A\oplus A^*$, in the statement ii) of Proposition \ref{HS_preCourant_structure_mu+gamma+psi}, we have to require the bivectors to be weak-Poisson\footnote{A bivector $\pi$ on a Lie algebroid $(A,\mu)$ is \emph{weak-Poisson} if $\{ \mu, \{\{\pi, \mu \} , \pi\} \}=0$ or, equivalently, $\{ \mu, [\pi,\pi]\}=0$.} as shown in the next theorem.

\begin{thm}  \label{HS_Courant_structure_mu+gamma+psi}
Let $(\omega_1, \omega_2, \omega_3)$ be a triplet of $2$-forms and $(\pi_1,\pi_2, \pi_3)$ be a triplet  of bivectors on a Lie algebroid $(A,\mu)$. Consider the triplet $(\mathcal{S}_1, \mathcal{S}_2, \mathcal{S}_3)$ of endomorphisms of $A \oplus A^*$, with $\mathcal{S}_i$ given by (\ref{definition_Si}). The following assertions are equivalent:
\begin{enumerate}
    \item $(\omega_1, \omega_2, \omega_3)$ is a hypersymplectic structure with torsion on the Lie algebroid $(A,\mu)$, $\pi_i$ is the inverse of $\omega_i$ and $\pi_i$ is weak-Poisson, $i=1,2,3$;
    \item $(\mathcal{S}_1, \mathcal{S}_2, \mathcal{S}_3)$ is a hypersymplectic structure on the Courant algebroid $(A\oplus A^*, \mu+\psi)$, for some $\psi\in \Gamma(\wedge^3 A)$.
\end{enumerate}
\end{thm}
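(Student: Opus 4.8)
The plan is to reduce the whole statement to Proposition~\ref{HS_preCourant_structure_mu+gamma+psi}, which already does the real work at the pre-Courant level. That proposition tells us that the conjunction ``$(\omega_1,\omega_2,\omega_3)$ is hypersymplectic with torsion on $(A,\mu)$ and $\pi_i=\omega_i^{-1}$'' is equivalent to ``$(\mathcal{S}_1,\mathcal{S}_2,\mathcal{S}_3)$ is a hypersymplectic structure on the \emph{pre-Courant} algebroid $(A\oplus A^*,\mu+\psi)$'', and moreover it pins down $\psi$: in that situation condition iii) of Definition~\ref{def_epsilonHS_pre-Courant} forces $\psi=-\frac{1}{2}[\pi_i,\pi_i]$ for each $i\in\{1,2,3\}$, so in particular $[\pi_1,\pi_1]=[\pi_2,\pi_2]=[\pi_3,\pi_3]$. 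Thus the only extra content of the theorem, compared with Proposition~\ref{HS_preCourant_structure_mu+gamma+psi}, is the passage from a pre-Courant to a genuine Courant structure, i.e. the Jacobi identity $\{\mu+\psi,\mu+\psi\}=0$, and what has to be shown is that this identity is exactly the weak-Poisson condition on the bivectors $\pi_i$.

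First I would expand $\{\mu+\psi,\mu+\psi\}=\{\mu,\mu\}+2\{\mu,\psi\}+\{\psi,\psi\}$ and simplify using the bidegree grading of $\mathcal{F}_{A\oplus A^*}$. Since $(A,\mu)$ is a Lie algebroid, $\{\mu,\mu\}=0$; and since $\psi\in\Gamma(\wedge^3 A)=\mathcal{F}_{A\oplus A^*}^{3,0}$ while the big bracket lowers bidegree by $(1,1)$, the term $\{\psi,\psi\}$ lies in $\mathcal{F}_{A\oplus A^*}^{5,-1}=\{0\}$ and therefore vanishes. Hence $\{\mu+\psi,\mu+\psi\}=2\{\mu,\psi\}$, an element of $\mathcal{F}_{A\oplus A^*}^{3,1}$, and $\mu+\psi$ is a Courant structure if and only if $\{\mu,\psi\}=0$. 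Substituting $\psi=-\frac{1}{2}[\pi_i,\pi_i]$ gives $\{\mu,\psi\}=-\frac{1}{2}\{\mu,[\pi_i,\pi_i]\}$, so that $\{\mu+\psi,\mu+\psi\}=0\iff\{\mu,[\pi_i,\pi_i]\}=0$, and the latter is by definition the statement that $\pi_i$ is weak-Poisson. Because the self-brackets $[\pi_1,\pi_1]=[\pi_2,\pi_2]=[\pi_3,\pi_3]$ all coincide here, $\pi_i$ is weak-Poisson for one index exactly when it is weak-Poisson for all three, which is the hypothesis appearing in assertion (i).

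With these remarks both implications are immediate. For $(i)\Rightarrow(ii)$: assuming (i), Proposition~\ref{HS_preCourant_structure_mu+gamma+psi} gives that $(\mathcal{S}_1,\mathcal{S}_2,\mathcal{S}_3)$ is hypersymplectic on the pre-Courant algebroid $(A\oplus A^*,\mu+\psi)$ with $\psi=-\frac{1}{2}[\pi_i,\pi_i]$; the weak-Poisson hypothesis then yields $\{\mu,\psi\}=0$, hence $\{\mu+\psi,\mu+\psi\}=0$, so $\mu+\psi$ is a Courant structure and (ii) holds. For $(ii)\Rightarrow(i)$: a hypersymplectic structure on the Courant algebroid $(A\oplus A^*,\mu+\psi)$ is in particular a hypersymplectic structure on the underlying pre-Courant algebroid, so Proposition~\ref{HS_preCourant_structure_mu+gamma+psi} furnishes the hypersymplectic-with-torsion structure on $(A,\mu)$, the relation $\pi_i=\omega_i^{-1}$, and the identity $\psi=-\frac{1}{2}[\pi_i,\pi_i]$; the Courant condition $\{\mu+\psi,\mu+\psi\}=0$ then forces $\{\mu,[\pi_i,\pi_i]\}=0$, i.e. each $\pi_i$ is weak-Poisson, giving (i).

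The only step that really requires care is the identification of the Jacobi identity for $\mu+\psi$ with the weak-Poisson condition: one must justify the vanishing of $\{\psi,\psi\}$ by the bidegree count and then use the explicit form $\psi=-\frac{1}{2}[\pi_i,\pi_i]$ supplied by Proposition~\ref{HS_preCourant_structure_mu+gamma+psi} (together with the fact that the $[\pi_i,\pi_i]$ all agree, so the single weak-Poisson condition obtained is the same for every $i$). Everything else is a direct invocation of the already-established pre-Courant statement.
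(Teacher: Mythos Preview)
Your proof is correct and follows essentially the same route as the paper's: reduce to Proposition~\ref{HS_preCourant_structure_mu+gamma+psi} for the pre-Courant part, then identify the Courant condition $\{\mu+\psi,\mu+\psi\}=0$ with $\{\mu,\psi\}=0$ and hence with the weak-Poisson condition on the $\pi_i$. You are simply more explicit than the paper in justifying the vanishing of $\{\mu,\mu\}$ and $\{\psi,\psi\}$ (via the Lie algebroid hypothesis and the bidegree count) and in noting that the equality $[\pi_1,\pi_1]=[\pi_2,\pi_2]=[\pi_3,\pi_3]$ makes the weak-Poisson condition independent of $i$.
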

\begin{proof}
    In addition to the proof of Proposition~\ref{HS_preCourant_structure_mu+gamma+psi}, it is enough to notice that, since $(A, \mu)$ is a Lie algebroid,
    \begin{align*}
    \mu+\psi\text{ is Courant} &\Leftrightarrow \bb{\mu+\psi}{\mu+\psi}=0\\
        &\Leftrightarrow \bb{\mu}{\psi}=0 \Leftrightarrow \pi_i\text{ is weak-Poisson},\ i=1,2,3.
    \end{align*}

\end{proof}

In the next proposition we show that having a Lie bialgebroid $(A,A^*)$ equipped with a hypersymplectic structure with torsion on $A$ and a hypersymplectic structure with torsion on $A^*$ is equivalent to having a hypersymplectic structure (without torsion) on $A\oplus A^*$ equipped with a pre-Courant structure.

\begin{prop}\label{HS_preCourant_structure_mu+gamma+psi+phi}
Let $((A, A^*), \mu,\gamma)$ be a Lie bialgebroid, $(\omega_1, \omega_2, \omega_3)$ be a triplet of $2$-forms and $(\pi_1,\pi_2, \pi_3)$ be a triplet  of bivectors on $A$. Consider the triplet $(\mathcal{S}_1, \mathcal{S}_2, \mathcal{S}_3)$ of endomorphisms of $A \oplus A^*$, with $\mathcal{S}_i$ given by (\ref{definition_Si}). The following assertions are equivalent:
\begin{enumerate}
    \item $(\omega_1, \omega_2, \omega_3)$ is a hypersymplectic structure with torsion on the Lie algebroid $(A,\mu)$ and $(\pi_1, \pi_2, \pi_3)$ is a hypersymplectic structure with torsion on the Lie algebroid $(A^*,\gamma)$, with $\pi_i$ the inverse of $\omega_i$, $i=1,2,3$;
    \item $(\mathcal{S}_1, \mathcal{S}_2, \mathcal{S}_3)$ is a hypersymplectic structure on the pre-Courant algebroid $(A\oplus A^*, \mu+\gamma +\psi+\phi)$, for some $\psi\in \Gamma(\wedge^3 A)$ and $\phi \in \Gamma(\wedge^3 A^*)$.
\end{enumerate}
\end{prop}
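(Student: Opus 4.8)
The plan is to adapt the proof of Proposition~\ref{HS_preCourant_structure_mu+gamma+psi}, using now \emph{both} parts of Lemma~\ref{4equivalent2} rather than only part i). Work in the supergeometric picture and write $\mathcal{S}_i=\omega_i+\varepsilon_i\pi_i=\omega_i-\pi_i$ (recall $\varepsilon_1=\varepsilon_2=\varepsilon_3=-1$), with $\omega_i\in\mathcal{F}_{A\oplus A^*}^{0,2}$ and $\pi_i\in\mathcal{F}_{A\oplus A^*}^{2,0}$. Exactly as in the cited proposition, Lemma~\ref{lem_algebraic_conditions} reduces conditions i) and ii) of Definition~\ref{def_epsilonHS_pre-Courant} for $(\mathcal{S}_1,\mathcal{S}_2,\mathcal{S}_3)$ to the requirements that $\pi_i$ be the inverse of $\omega_i$ and that $N_i^2=-{\rm id}_A$ for $i=1,2,3$; and, just as in the proof of Theorem~\ref{structure_A+A*_mu+gamma}, $N_i^2=-{\rm id}_A$ is equivalent to the corresponding squaring condition for the transition morphisms of the $2$-forms $\pi_i$ (with inverses $\omega_i$) on $(A^*,\gamma)$. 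So the whole content reduces to analysing condition iii), i.e. the identity $\{\mathcal{S}_i,\{\mathcal{S}_i,\Theta\}\}=-\Theta$ with $\Theta=\mu+\gamma+\psi+\phi$.

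The core of the argument is the bidegree bookkeeping. Expanding $\{\mathcal{S}_i,\{\mathcal{S}_i,\mu+\gamma+\psi+\phi\}\}$ and using that the big bracket has bidegree $(-1,-1)$ while $\mu,\gamma,\psi,\phi$ have bidegrees $(1,2),(2,1),(3,0),(0,3)$, the equation $\{\mathcal{S}_i,\{\mathcal{S}_i,\Theta\}\}=-\Theta$ splits into four equations, one in each of these bidegrees. Keeping track of which iterated brackets survive the bidegree constraints, one checks that the equations in bidegrees $(1,2)$ and $(3,0)$ involve only $\mu,\psi,\omega_i,\pi_i$ and are precisely the system
$$\{\omega_i,\{\pi_i,\mu\}\}+\{\pi_i,\{\omega_i,\mu\}\}-\{\omega_i,\{\omega_i,\psi\}\}=\mu,\qquad -\{\pi_i,\{\pi_i,\mu\}\}+\{\pi_i,\{\omega_i,\psi\}\}=\psi$$
appearing in the proof of Proposition~\ref{HS_preCourant_structure_mu+gamma+psi}, whereas the equations in bidegrees $(2,1)$ and $(0,3)$ involve only $\gamma,\phi,\omega_i,\pi_i$ and are obtained from the previous pair by the formal substitution $\mu\leftrightarrow\gamma$, $\psi\leftrightarrow\phi$, $\omega_i\leftrightarrow\pi_i$, i.e. by exchanging the roles of $A$ and $A^*$. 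In particular the four equations \emph{decouple} into a ``$(\mu,\psi)$-part'' and a ``$(\gamma,\phi)$-part''.

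By Lemma~\ref{4equivalent2}~i), together with the manipulations already carried out in the proof of Proposition~\ref{HS_preCourant_structure_mu+gamma+psi}, the $(\mu,\psi)$-part is equivalent to $\{\pi_i,\{\pi_i,\mu\}\}=2\psi$, i.e. to $\psi=-\frac{1}{2}[\pi_i,\pi_i]$ for the Schouten--Nijenhuis bracket on $(A,\mu)$; dually, Lemma~\ref{4equivalent2}~ii) shows that the $(\gamma,\phi)$-part is equivalent to $\{\omega_i,\{\omega_i,\gamma\}\}=2\phi$, i.e. to $\phi=-\frac{1}{2}[\omega_i,\omega_i]$ for the Schouten--Nijenhuis bracket on $(A^*,\gamma)$, with $\omega_i$ regarded as a bivector on $A^*$. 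Hence condition iii) holds (for a given $(\psi,\phi)$) if and only if $\{\pi_i,\{\pi_i,\mu\}\}=2\psi$ and $\{\omega_i,\{\omega_i,\gamma\}\}=2\phi$ hold simultaneously for $i=1,2,3$.

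Both implications now follow. If (ii) holds, condition iii) forces $[\pi_1,\pi_1]=[\pi_2,\pi_2]=[\pi_3,\pi_3]$ and $[\omega_1,\omega_1]=[\omega_2,\omega_2]=[\omega_3,\omega_3]$ (the latter on $A^*$); combined with the squaring conditions on the transition morphisms, the contravariant description~(\ref{contravariant_def}), applied on $(A,\mu)$ and on $(A^*,\gamma)$ respectively, yields (i). Conversely, if (i) holds then~(\ref{contravariant_def}) makes $\psi:=-\frac{1}{2}[\pi_i,\pi_i]\in\Gamma(\wedge^3 A)$ and $\phi:=-\frac{1}{2}[\omega_i,\omega_i]\in\Gamma(\wedge^3 A^*)$ well defined and independent of $i$, and the equivalences above deliver condition iii) for $\Theta=\mu+\gamma+\psi+\phi$, so (ii) holds. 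I expect the genuine work to be the bidegree bookkeeping of the second paragraph --- checking that the four equations are exactly those written and, above all, that they decouple into the two independent parts; once this is in place, the remainder is a transcription of the proof of Proposition~\ref{HS_preCourant_structure_mu+gamma+psi} and of its $A\leftrightarrow A^*$ dual.
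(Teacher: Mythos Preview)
Your proposal is correct and follows essentially the same approach as the paper: both split condition~iii) by bidegree into four equations, observe that these decouple into a $(\mu,\psi)$-pair and a $(\gamma,\phi)$-pair, and then use the two parts of Lemma~\ref{4equivalent2} together with~(\ref{contravariant_def}) to conclude. Your explicit emphasis on the $A\leftrightarrow A^*$ duality (the formal substitution $\mu\leftrightarrow\gamma$, $\psi\leftrightarrow\phi$, $\omega_i\leftrightarrow\pi_i$) is a clean way to organise the bookkeeping, but the underlying computation is identical to the paper's.
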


\begin{proof}
Let us assume that $(\omega_1, \omega_2, \omega_3)$ is a hypersymplectic structure with torsion on $(A,\mu)$,
$(\pi_1, \pi_2, \pi_3)$ is a hypersymplectic structure with torsion on $(A^*,\gamma)$ and $\pi_i$ is the inverse of $\omega_i$, $i=1,2,3$.
From Lemma~\ref{lem_algebraic_conditions}, conditions i) and ii) of Definition~\ref{def_epsilonHS_pre-Courant} are satisfied while, for condition iii), we have:
\begin{align}
&\{\mathcal{S}_i, \{\mathcal{S}_i , \mu+\gamma +\psi + \phi \} \}= - \mu - \gamma -\psi -\phi \nonumber\\
 &\Leftrightarrow
 \begin{cases}
    \{\omega_i, \{ \omega_i, \psi \}\}- \{ \omega_i, \{\pi_i, \mu \}\}- \{\pi_i, \{\omega_i, \mu \}\}=-\mu\\
    - \{\omega_i, \{\{\pi_i, \gamma \}\}- \{\pi_i, \{\omega_i, \gamma \}\}+ \{\pi_i, \{\pi_i, \phi \}\}=-\gamma\\
    \{\omega_i, \{ \omega_i, \gamma \}\}- \{ \omega_i, \{\pi_i, \phi \}\}=-\phi\\
    \{\pi_i, \{\pi_i, \mu \}\}- \{\pi_i, \{\omega_i, \psi \}\}=-\psi
 \end{cases}\nonumber\\
 &\Leftrightarrow
 \begin{cases}
    \{\omega_i, \{ \omega_i, \psi \}\}=2  \{\pi_i, \{\omega_i, \mu \}\}\\
    \{\pi_i, \{\pi_i, \phi \}\}= 2 \{\omega_i, \{\pi_i, \gamma \}\}\\
    \{\omega_i, \{ \omega_i, \gamma \}\}=2 \phi\\
    \{\pi_i, \{\pi_i, \mu \}\}= 2 \psi
  \end{cases}\nonumber\\
 &\Leftrightarrow
 \begin{cases}
    \{\omega_i, \{ \omega_i, \gamma \}\}=2 \phi\\
    \{\pi_i, \{\pi_i, \mu \}\}= 2 \psi,
  \end{cases}\label{torsionSi_mu+gamma+psi+phi}
\end{align}
where the latter equivalence is given by Lemma~\ref{4equivalent2}. Equation (\ref{torsionSi_mu+gamma+psi+phi}) gives the appropriate definition of $\psi$ and $\phi$ in order to satisfy condition iii) of Definition~\ref{def_epsilonHS_pre-Courant}.

Now, we assume that $(\mathcal{S}_1, \mathcal{S}_2, \mathcal{S}_3)$ is a hypersymplectic structure on the pre-Courant algebroid $(A\oplus A^*,\mu + \gamma+\psi+ \phi)$. Using Lemma~\ref{lem_algebraic_conditions}, we conclude that $\pi_i$ is the inverse of $\omega_i$ and $N_i^2=-{\rm id}_A$, $i=1,2,3$. Moreover, from (\ref{torsionSi_mu+gamma+psi+phi}), we get $\psi= - \frac{1}{2}\{\pi_i, \{\mu, \pi_i\}\}$ and $\phi= - \frac{1}{2}\{\omega_i, \{\gamma, \omega_i\}\}$, for $i=1,2,3$. Thus,  $(\omega_1, \omega_2, \omega_3)$ is a hypersymplectic structure with torsion on the Lie algebroid $(A,\mu)$ and $(\pi_1, \pi_2, \pi_3)$ is a hypersymplectic structure with torsion on the Lie algebroid $(A^*,\gamma)$ (see (\ref{contravariant_def})).
\end{proof}

In the statement of Proposition \ref{HS_preCourant_structure_mu+gamma+psi+phi}, if we intend to obtain a Courant algebroid structure on $A\oplus A^*$, we need to impose some extra conditions, as shown in the next theorem.
%

\begin{thm} \label{last_thm}
Let $((A, A^*), \mu,\gamma)$ be a Lie bialgebroid, $(\omega_1, \omega_2, \omega_3)$ be a triplet of $2$-forms and $(\pi_1,\pi_2, \pi_3)$ be a triplet  of bivectors. Consider the triplet $(\mathcal{S}_1, \mathcal{S}_2, \mathcal{S}_3)$ of endomorphisms of $A \oplus A^*$, with $\mathcal{S}_i$ given by (\ref{definition_Si}). The following assertions are equivalent:
\begin{enumerate}
    \item $(\omega_1, \omega_2, \omega_3)$ is a hypersymplectic structure with torsion on the Lie algebroid $(A,\mu)$, $(\pi_1, \pi_2, \pi_3)$ is a hypersymplectic structure with torsion on the Lie algebroid $(A^*,\gamma)$, with $\pi_k$ the inverse of $\omega_k$, $k=1,2,3$, and $\{\gamma, \phi\}=\{\mu, \psi \}=\{\psi,\phi \}=0$, where $\psi= - \frac{1}{2}\{\pi_i, \{\mu, \pi_i\}\}$ and $\phi= - \frac{1}{2}\{\omega_j, \{\gamma, \omega_j\}\}$, for any $i,j\in\{1,2,3\}$;
    \item $(\mathcal{S}_1, \mathcal{S}_2, \mathcal{S}_3)$ is a hypersymplectic structure on the Courant algebroid $(A\oplus A^*, \mu+\gamma +\psi+\phi)$, for some $\psi\in \Gamma(\wedge^3 A)$ and $\phi \in \Gamma(\wedge^3 A^*)$.
\end{enumerate}
\end{thm}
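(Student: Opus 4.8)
The plan is to reduce everything to Proposition~\ref{HS_preCourant_structure_mu+gamma+psi+phi}. That proposition already gives the equivalence of assertion~(i) --- with the three bracket conditions $\{\gamma,\phi\}=\{\mu,\psi\}=\{\psi,\phi\}=0$ deleted --- and assertion~(ii) with the word ``Courant'' weakened to ``pre-Courant'', and its proof identifies the relevant $3$-vectors as $\psi=-\frac12\{\pi_k,\{\mu,\pi_k\}\}$ and $\phi=-\frac12\{\omega_k,\{\gamma,\omega_k\}\}$, for any $k$. Writing $\Theta=\mu+\gamma+\psi+\phi$, what is left to prove is therefore that, under the hypotheses of~(i) without those three conditions, the pre-Courant function $\Theta$ satisfies $\{\Theta,\Theta\}=0$ if and only if $\{\gamma,\phi\}=\{\mu,\psi\}=\{\psi,\phi\}=0$.

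For this I would expand $\{\Theta,\Theta\}$ in the big bracket and sort the result by bidegree. Each of $\mu,\gamma,\psi,\phi$ has total degree $3$, so $\{\Theta,\Theta\}$ is the sum of the four self-brackets and twice each of the six cross-brackets. For bidegree reasons $\{\psi,\psi\}\in\mathcal{F}_{A\oplus A^*}^{5,-1}=0$ and $\{\phi,\phi\}\in\mathcal{F}_{A\oplus A^*}^{-1,5}=0$, while the remaining terms distribute among the five pairwise distinct bidegrees $(1,3),(3,1),(2,2),(0,4),(4,0)$; hence $\{\Theta,\Theta\}=0$ is equivalent to the separate vanishing of its homogeneous components $\{\mu,\mu\}+2\{\gamma,\phi\}$, $\{\gamma,\gamma\}+2\{\mu,\psi\}$, $2\{\mu,\gamma\}+2\{\psi,\phi\}$, $2\{\mu,\phi\}$ and $2\{\gamma,\psi\}$. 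Since $((A,A^*),\mu,\gamma)$ is a Lie bialgebroid, $\{\mu,\mu\}=\{\gamma,\gamma\}=\{\mu,\gamma\}=0$, so $\{\Theta,\Theta\}=0$ reduces to the five conditions $\{\gamma,\phi\}=0$, $\{\mu,\psi\}=0$, $\{\psi,\phi\}=0$, $\{\mu,\phi\}=0$ and $\{\gamma,\psi\}=0$, of which the first three are exactly those imposed in~(i).

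The remaining --- and, I expect, main --- point is to show that $\{\mu,\phi\}=0$ and $\{\gamma,\psi\}=0$ hold automatically under the standing hypotheses, so that it suffices to impose the other three. For $\{\gamma,\psi\}$ I would substitute $\psi=-\frac12\{\pi_i,\{\mu,\pi_i\}\}$, expand using the graded Jacobi identity, use $\{\mu,\gamma\}=0$ to move $\{\gamma,\cdot\}$ past $\{\mu,\cdot\}$, and then invoke Lemma~\ref{4equivalent2}~i) --- which, since $(\omega_1,\omega_2,\omega_3)$ is hypersymplectic with torsion on $(A,\mu)$, gives $\{\omega_i,\{\omega_i,\psi\}\}=2\{\pi_i,\{\omega_i,\mu\}\}$ --- together with $\pi_i\circ\omega_i=\mathrm{id}_A$, to collapse everything to $0$; the computation for $\{\mu,\phi\}=0$ is the mirror image, built from $\phi=-\frac12\{\omega_j,\{\gamma,\omega_j\}\}$, Lemma~\ref{4equivalent2}~ii) applied to $(A^*,\gamma)$, and $\omega_j\circ\pi_j=\mathrm{id}_{A^*}$. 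Once these two identities are available, the equivalence of the previous paragraph follows, and together with Proposition~\ref{HS_preCourant_structure_mu+gamma+psi+phi} it proves the theorem; I anticipate that the only real difficulty there is the careful bookkeeping of signs and of the bidegree of each intermediate term.
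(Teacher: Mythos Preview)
Your strategy is correct and matches the paper's: reduce to Proposition~\ref{HS_preCourant_structure_mu+gamma+psi+phi}, expand $\{\Theta,\Theta\}$ by bidegree to obtain the five conditions (this is exactly the paper's display~(\ref{aux})), and then argue that $\{\gamma,\psi\}=\{\mu,\phi\}=0$ hold automatically so that only the three stated conditions remain.

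The one point where you diverge from the paper is in the verification of $\{\gamma,\psi\}=0$. You plan to invoke Lemma~\ref{4equivalent2}~i) and $\pi_i\circ\omega_i=\mathrm{id}_A$, but this is a red herring: after substituting $\psi=-\tfrac12\{\pi_i,\{\mu,\pi_i\}\}$ no $\omega_i$ appears, and Lemma~\ref{4equivalent2} is an equivalence of conditions rather than an identity that simplifies the expression at hand. The paper's computation is more elementary and stops where yours begins to go astray: after your first two moves (Jacobi on the outer bracket, then $\{\mu,\gamma\}=0$ to swap $\gamma$ past $\mu$) one has
\[
\{\gamma,\psi\}=-\tfrac12\{\{\gamma,\pi_i\},\{\mu,\pi_i\}\}-\tfrac12\{\pi_i,\{\mu,\{\gamma,\pi_i\}\}\}.
\]
Applying Jacobi once more to the second term and using the bidegree observation $\{\pi_i,\{\gamma,\pi_i\}\}\in\mathcal{F}_{A\oplus A^*}^{4,-1}=0$ shows the two terms cancel. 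The computation for $\{\mu,\phi\}=0$ is the mirror image. No hypersymplectic data beyond the definition of $\psi,\phi$ and the Lie bialgebroid condition $\{\mu,\gamma\}=0$ is used.
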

\begin{proof}
    First notice that, using the fact that $((A, A^*), \mu,\gamma)$ is a Lie bialgebroid, we have the following equivalences:
    \begin{align}
    \mu+\gamma +\psi+\phi\text{ is Courant}
    &\Leftrightarrow \{\mu + \gamma+\psi+ \phi, \, \mu + \gamma+\psi+\phi \}=0\nonumber\\
    &\Leftrightarrow \,
    \begin{cases}
        \{\mu, \mu \}=-2 \{\gamma, \phi\}\\
        \{\gamma, \gamma \}=-2\{\mu, \psi \}\\
        \{\mu, \gamma \}= - \{\psi,\phi \}\\
        \{\gamma, \psi \}=0\\
        \{ \mu, \phi \}=0
    \end{cases}\nonumber\\
    &\Leftrightarrow \, \{\gamma, \phi\}=\{\mu, \psi \}=\{\psi,\phi \}=\{\gamma, \psi \}=\{ \mu, \phi \}=0.\label{aux}
    \end{align}

    Let us assume assertion ii), then $\mu+\gamma +\psi+\phi$ is a Courant structure and condition (\ref{aux}) is satisfied. In particular, $\{\gamma, \phi\}=\{\mu, \psi \}=\{\psi,\phi \}=0$. Furthermore, the same computations we have done in the proof of Proposition \ref{HS_preCourant_structure_mu+gamma+psi+phi} (see (\ref{torsionSi_mu+gamma+psi+phi})), yield $\psi=-\frac{1}{2} \{ \pi_i, \{ \mu, \pi_i \} \}$ and $\phi=-\frac{1}{2} \{ \omega_i, \{ \gamma, \omega_i \} \}$. The remaining part of assertion i) is a consequence of Proposition \ref{HS_preCourant_structure_mu+gamma+psi+phi}.

    Assuming now assertion i), let us prove ii). Taking into account Proposition \ref{HS_preCourant_structure_mu+gamma+psi+phi}, we only need to prove that $\mu+\gamma +\psi+\phi$ is a Courant structure, i.e., that (\ref{aux}) is satified. Because part of (\ref{aux}) holds by assumption, we only need to prove $\{\gamma, \psi \}=\{ \mu, \phi \}=0$. We shall compute one equality, the other is similar. We have
    \begin{eqnarray*}
        \{\gamma, \psi \}&=& -\frac{1}{2} \{\gamma, \{ \pi_i, \{ \mu, \pi_i \} \} \}\\
        &=& -\frac{1}{2} \{ \{ \gamma, \pi_i \}, \{ \mu, \pi_i \} \}-\frac{1}{2} \{\pi_i, \{\gamma, \{ \mu, \pi_i \}\}\}\\
        &=&   -\frac{1}{2} \{ \{ \gamma, \pi_i \}, \{ \mu, \pi_i \} \}-\frac{1}{2} \{\pi_i, \{\mu, \{ \gamma, \pi_i \}\}\}=0,
    \end{eqnarray*}
    where we used the Jacobi identity of $\{.,.\}$ and the fact that $((A, A^*), \mu, \gamma)$ is a Lie bialgebroid (in particular that $\{\mu, \gamma \}=0$).
\end{proof}

If we take  $\phi=0$ in Theorem~\ref{last_thm}, then the Lie algebroid $(A^*, \gamma)$ is equipped with a hypersymplectic structure (without torsion) determined by $(\pi_1, \pi_2, \pi_3)$.
So, Theorem~\ref{last_thm} shows that having a Lie bialgebroid $(A,A^*)$ equipped with a hypersymplectic structure with torsion on $A$ and a hypersymplectic structure on $A^*$ is equivalent to having a hypersymplectic structure on the Courant algebroid $(A\oplus A^*, \mu+\gamma+\psi)$, which is the double of the quasi-Lie bialgebroid $((A,A^*), \mu, \gamma, \psi)$.

\

\noindent {\bf Acknowledgments.} This work was partially supported by CMUC-FCT (Portugal) and FCT grant PEst-C/MAT/UI0324/2013 through European program COMPETE/FEDER.

\end{document}